\def\1{\bm{1}}
\def\vv{{\bm{v}}}
\DeclareMathAlphabet{\mathsfit}{\encodingdefault}{\sfdefault}{m}{sl}
\SetMathAlphabet{\mathsfit}{bold}{\encodingdefault}{\sfdefault}{bx}{n}
\newcommand{\E}{\mathbb{E}}
\newcommand{\R}{\mathbb{R}}
\definecolor{mydarkblue}{rgb}{0,0.08,0.45}
\def\xx{{\boldsymbol x}}
\def\yy{{\boldsymbol y}}
\def\zz{{\boldsymbol z}}
\def\bb{{\boldsymbol b}}
\def\II{{\boldsymbol I}}
\def\yy{{\boldsymbol y}}
\def\vv{{\boldsymbol v}}
\def\uu{{\boldsymbol u}}
\def\ww{{\boldsymbol w}}
\def\zz{{\boldsymbol z}}
\def\BB{{\boldsymbol B}}
\def\AA{{\boldsymbol A}}
\def\CC{{\boldsymbol C}}
\def\MM{{\boldsymbol M}}
\def\DD{{\boldsymbol D}}
\def\QQ{{\boldsymbol Q}}
\def\UU{{\boldsymbol U}}
\def\HH{{\boldsymbol H}}
\def\thetaa{{\boldsymbol \theta}}
\def\dif{\mathop{}\!\mathrm{d}}
\def\RR{{\mathbb R}}
\def\EE{{\mathbb{E}\,}}
\renewcommand{\gg}{\boldsymbol{g}}
\DeclareMathOperator{\tr}{tr}
\def\defas{\stackrel{\text{def}}{=}}
\DeclareMathOperator*{\dist}{dist}
\DeclareMathOperator*{\Span}{\mathbf{span}}
\newcommand*\mybluebox[1]{\colorbox{myblue}{\hspace{1em}#1\hspace{1em}}}
\DeclareMathOperator*{\argmin}{{arg\,min}}
\definecolor{myblue}{HTML}{D2E4FC}
\definecolor{Gray}{gray}{0.92}
\newtheorem{assumption}{Assumption}
\newtheorem{definition}{Definition}
\title{Average-case Acceleration for Bilinear Games and Normal Matrices}
\author[a]{Carles Domingo-Enrich}
\author[b]{Fabian Pedregosa}
\author[c]{Damien Scieur}
\affil[a]{Courant Institute of Mathematical Sciences, New York University}
\affil[b]{Google Research}
\affil[c]{Samsung SAIL Montreal}
\def\imod#1{\allowbreak\mkern10mu({\operator@font mod}\,\,#1)}
\def\inmod#1{\allowbreak\mkern5mu({\operator@font mod}\,\,#1)}
\begin{document}

\maketitle

\begin{abstract}
Advances in generative modeling and adversarial learning have given rise to renewed interest in smooth games. However, the absence of symmetry in the matrix of second derivatives poses challenges that are not present in the classical minimization framework. While a rich theory of average-case analysis has been developed for minimization problems, little is known in the context of smooth games. In this work we take a first step towards closing this gap by developing average-case optimal first-order methods for a subset of smooth games. 
We make the following three main contributions. First, we show that for zero-sum bilinear games the average-case optimal method is the optimal method for the minimization of the Hamiltonian. Second, we provide an explicit expression for the optimal method corresponding to normal matrices, potentially non-symmetric. Finally, we specialize it to matrices with eigenvalues located in a disk and show a provable speed-up compared to worst-case optimal algorithms. We illustrate our findings through benchmarks with a varying degree of mismatch with our assumptions.% These experiements also suggests a broader applicability of the proposed methods also extend to non-normal matrices.
\end{abstract}

\section{Introduction}
The traditional analysis of optimization algorithms is a worst-case analysis \citep{nemirovski1995information,nesterov2004introductory}. This type of analysis provides a complexity bound for any input from a function class, no matter how unlikely. However, since hard-to-solve inputs might rarely occur in practice, the worst-case complexity bounds might not be representative of the observed running time.

A more representative analysis is given by the average-case complexity, averaging the algorithm’s complexity over all possible inputs. This analysis is standard for analyzing, e.g., sorting \citep{knuth1997theart} and cryptography algorithms \citep{katz2014introduction}. Recently, a line of work \citep{berthier2020accelerated,pedregosa2020average, lacotte2020optimal,paquette2020halting} focused on optimal methods for the optimization of quadratics, specified by a symmetric matrix. While worst-case analysis uses bounds on the matrix eigenvalues to yield upper and lower bounds on convergence, average-case analysis relies on the expected distribution of eigenvalues and provides algorithms with sharp optimal convergence rates. While the algorithms developed in this context have been shown to be efficient for minimization problems, these have not been extended to smooth games.

A different line of work considers smooth games but studies \emph{worst-case} optimal methods \citep{azizian2020accelerating}.
In this work, we combine the two previous trends and develop novel average-case optimal algorithms for finding the root of a linear system determined by a (potentially non-symmetric) normal matrix. We make the following {\bfseries main contributions}:
\begin{itemize}[leftmargin=*]
    \item Inspired by the problem of finding equilibria in smooth games, we develop average-case optimal algorithms for finding the root of a non-symmetric affine operator, both under a normality assumption (Thm. \ref{thm:opt_alg}), and under the extra assumption that eigenvalues of the operator are supported in a disk (Thm. \ref{thm:circular}). The proposed method, and its asymptotic variant, show a polynomial speedup compared to worst-case optimal method, verified by numerical simulations.
    \item We make a novel connection between average-case optimal methods for optimization, and average-case optimal methods for bilinear games. In particular, we show that solving the Hamiltonian using an average-case optimal method is optimal (\autoref{thm:bilinear_thm1}). This result complements  \citep{azizian2020accelerating}, who proved that Polyak Heavy Ball algorithm on the Hamiltonian is asymptotically worst-case optimal.
    % \item We provide a general expression for average-case optimal algorithms for non-symmetric normal matrices (\autoref{thm:opt_alg}). Then, we study the specific case in which the support of the eigenvalues distribution is a disk (\autoref{thm:circular}). Our method shows a polynomial speedup compared to the algorithm presented in \citep{azizian2020accelerating}.
    % \item We analyze  of our schemes, also improving over worst-case optimal methods.
\end{itemize}

\section{Average-case analysis for normal matrices} \label{sec:general_sec}

In this paper we consider the following class of problems.% that generalizes the minimization of a convex quadratic function:
\begin{definition} \label{def:class_opt}
Let $\AA \in \RR^{d \times d}$ be a real matrix and $\xx^\star \in \RR^d$ a vector. The  non-symmetric (affine) operator (\textbf{NSO}) problem is defined as:
\begin{empheq}{equation*}\tag{NSO}\label{eq:nonsymop}
    \text{Find } \xx \; : \;  F(\xx) \defas \AA(\xx\!-\!\xx^{\star}) = {\boldsymbol 0}\,.
\end{empheq}
\end{definition}

This problem generalizes that of minimization of a convex quadratic function $f$, since we can cast the latter in this framework by setting the operator $F = \nabla f$. 
The set of solutions is an affine subspace that we will denote $\mathcal{X}^\star$. We will find convenient to consider the distance to this set, defined as
\begin{equation} \label{eq:def_distance}
    \text{dist}(\xx,\, \mathcal{X}^\star) \defas \min_{\vv\in \mathcal{X}^\star} \|\xx-\vv\|^2,\;\; \text{ with } \mathcal{X}^\star = \{ \xx \in \mathbb{R}^{d}\, | \,\AA (\xx -\xx^\star) = \mathbf{0} \}\,.
\end{equation}

In this paper we will develop \emph{average-case} optimal methods. For this, we consider $\AA$ and $\xx^\star$ to be random vectors, and a random initialization $\xx_0$. This induces a probability distribution over \ref{eq:nonsymop} problems, and we seek to find methods that have an optimal \emph{expected} suboptimality w.r.t. this distribution. More precisely, average-case optimal methods solve the following at each iteration $t$:
\begin{empheq}[box=\mybluebox]{equation}\label{eq:optimal_algo_problem}
  \vphantom{\sum}\min_{\xx_t}\mathbb{E}_{(\AA,\xx^{\star},\xx_0)} \dist(\xx_t,\, \mathcal{X}^\star) \quad \text{s.t.} \;\; \xx_{i} \in \xx_0 + \Span(\{ F(\xx_j)\}_{j=0}^{i-1}), \; \forall i \in [1:t].
\end{empheq}

The last condition on $\xx_t$ stems from restricting the class of algorithms to first-order methods.
This class encompasses many known schemes such as gradient descent with momentum, or full-matrix AdaGrad. However, methods such as Adam \citep{kingma_adam_2014} or diagonal AdaGrad \citep{duchi_adaptive_2011} are \textit{not} in this class, as the diagonal re-scaling creates iterates $\xx_t$ outside the span of previous gradients.
Although we will focus on the distance to the solution, the results can be extended to other convergence criteria such as $\|F(\xx_t)\|^2$.

Finally, note that the expectations in this paper are on the problem instance and \textit{not} on the randomness of the algorithm.

\subsection{Orthogonal residual polynomials and first-order methods}

The analysis of first-order methods simplifies through the use of polynomials. This section provides the tools required to leverage this connection.

\begin{definition} \label{def:residual_polynomials}
A \textbf{residual polynomial} is a polynomial $P$ that satisfies $P(0)=1$.
\end{definition}

\begin{restatable}{prop}{linkalgopolynomial}
    \label{prop:link_algo_polynomial} \citep{hestenes1952methods}
    If the sequence $(\xx_t)_{t \in \mathbb{Z}_+}$ is generated by a first-order method, then there exist residual polynomials $P_t$, each one of degree at most $t$, verifying
    \begin{equation}\label{eq:polynomial_iterates}
        \xx_{t}-\xx^\star = P_t(\AA)(\xx_0-\xx^\star)~ \quad \forall \, i\in\{0,\ldots,t\}\,.
    \end{equation}
\end{restatable}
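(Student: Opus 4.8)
The plan is a straightforward strong induction on $t$; the only real idea is to rewrite everything in terms of the error vector $\xx_t-\xx^\star$ and exploit that, for the affine operator, $F(\xx_j)=\AA(\xx_j-\xx^\star)$, so that every gradient queried by the method is $\AA$ applied to a previous error. Concretely, I would carry out the following steps.

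\textbf{Base case.} For $t=0$ take $P_0\equiv 1$: it is a residual polynomial ($P_0(0)=1$) of degree $0\le 0$, and trivially $\xx_0-\xx^\star=P_0(\AA)(\xx_0-\xx^\star)$.

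\textbf{Inductive step.} Assume that for every $j\le t-1$ there is a residual polynomial $P_j$ with $\deg P_j\le j$ and $\xx_j-\xx^\star=P_j(\AA)(\xx_0-\xx^\star)$. By the first-order restriction in \eqref{eq:optimal_algo_problem}, $\xx_t\in\xx_0+\Span(\{F(\xx_j)\}_{j=0}^{t-1})$, so there are scalars $c_0,\dots,c_{t-1}$ (depending on the instance) with $\xx_t=\xx_0+\sum_{j=0}^{t-1}c_j F(\xx_j)$. Subtracting $\xx^\star$ and substituting $F(\xx_j)=\AA(\xx_j-\xx^\star)=\AA P_j(\AA)(\xx_0-\xx^\star)$ gives
\begin{equation}
  \xx_t-\xx^\star=\Big(\mathbf{I}+\sum_{j=0}^{t-1}c_j\,\AA\,P_j(\AA)\Big)(\xx_0-\xx^\star),
\end{equation}
so setting $P_t(\lambda)\defas 1+\sum_{j=0}^{t-1}c_j\,\lambda\,P_j(\lambda)$ proves the claim for index $t$. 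One then verifies $P_t(0)=1$ and $\deg P_t\le 1+\max_{j\le t-1}\deg P_j\le 1+(t-1)=t$.

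\textbf{Where the care goes.} There is no analytic obstacle — the content is purely bookkeeping — but two points deserve attention. First, stability of the normalization $P(0)=1$ under the recursion: this holds because the recursion only adds terms of the form $c_j\,\lambda\,P_j(\lambda)$, which are divisible by $\lambda$, a reflection of the fact that $F$ vanishes on $\mathcal{X}^\star$. Second, the degree bound is exactly $t$ and not $t+1$ precisely because the span in \eqref{eq:optimal_algo_problem} runs over $j\le i-1$ rather than $j\le i$. Finally, the quantifier ``$\forall i\in\{0,\dots,t\}$'' in the statement is exactly what the strong induction yields: each index $i$ comes equipped with its own polynomial $P_i$ of degree at most $i$.
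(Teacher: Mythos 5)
Your proof is correct and is the standard argument for this fact; the paper itself does not include a proof of this proposition (it is stated with a citation to Hestenes and Stiefel, 1952), and the induction you give — expanding $\xx_t\in\xx_0+\Span(\{F(\xx_j)\}_{j<t})$, substituting $F(\xx_j)=\AA P_j(\AA)(\xx_0-\xx^\star)$, and collecting terms into $P_t(\lambda)=1+\sum_j c_j\lambda P_j(\lambda)$ — is exactly the expected one, including the observation that the added terms are divisible by $\lambda$ so the normalization $P_t(0)=1$ is preserved.
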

As we will see, optimal average-case method are strongly related to orthogonal polynomials. We first define the inner product between polynomials.
\begin{definition}\label{def:scalar_product_poly}
    For $P, Q \in \mathbb{R}[X]$, we define the inner product $\langle \cdot, \cdot \rangle_\mu$ for a measure $\mu$ over $\mathbb{C}$ as
    \begin{align} \label{eq:scalar_product}
        \langle P, Q \rangle_{\mu} \defas \int_{\mathbb{C}} P(\lambda) Q(\lambda)^* \dif\mu(\lambda)\,.
\end{align}
\end{definition}

% \begin{definition} \label{def:orthogonal_poly}
% A sequence of polynomials $\{P_i\}$ is \textbf{orthogonal} w.r.t. $\langle \cdot, \cdot \rangle_{\mu}$ if
% \[
%     \langle P_i, P_i \rangle_{\mu} > 0; \qquad \langle P_i, P_j \rangle_{\mu} = 0 \;\; \text{if $i\neq j$}.
% \]
% The sequence is called \textit{orthonormal} if $\langle P_i, P_i \rangle_{\mu}$ is equal to one.
% \end{definition}
\begin{definition} \label{def:orthogonal_poly}
A sequence of polynomials $\{P_i\}$ is \textbf{orthogonal} (resp. \textbf{orthonormal}) w.r.t. $\langle \cdot, \cdot \rangle_{\mu}$ if
\[
    \langle P_i, P_i \rangle_{\mu} > 0 \;\; \text{(resp. $= 1$)}; \qquad \langle P_i, P_j \rangle_{\mu} = 0 \;\; \text{if $i\neq j$}.
\]
% The sequence is called \textit{orthonormal} if $\langle P_i, P_i \rangle_{\mu}$ is equal to one.
\end{definition}

\subsection{Expected Spectral Distribution}

%This section specifies the distribution of the random variable $\xx^{\star}$ and $\AA$ present problem \ref{eq:nonsymop}. 
Following \citep{pedregosa2020average}, we make the following assumption on the problem family.
\begin{assumption} \label{ass:assumption_x0_xstar}
    $\xx_0-\xx^\star$ is independent of $\AA$, and $\mathbb{E}_{\xx_0,\,\xx^\star}[(\xx_0-\xx^\star)(\xx_0-\xx^\star)^\top] = \frac{R^2}{d} \II_d$.
\end{assumption}

We will also require the following definitions to characterize difficulty of a problem class. Let $\{\lambda_1, \ldots, \lambda_d\}$ be the eigenvalues of a matrix $\AA\in\mathbb{R}^{d\times d}$. We define the \textbf{empirical spectral distribution} of $\AA$ as the probability measure
\begin{equation}\label{eq:empirical_spectral_density}
    \mu_{\AA}(\lambda) \defas {\textstyle{\frac{1}{d}\sum_{i=1}^d}} \delta_{\lambda_i}(\lambda)\,,
\end{equation}
where $\delta_{\lambda_i}$ is the Dirac delta, a distribution equal to zero everywhere except at $\lambda_i$ and whose integral over the entire real line is equal to one. Note that with this definition, $\int_{\mathcal{D}} \dif \mu_{\AA}(\lambda)$ corresponds to the proportion of eigenvalues in $\mathcal{D}$.

When $\AA$ is a matrix-valued random variable, $\mu_{\AA}$ is a measure-valued random variable. As such, we can define its \textbf{expected spectral distribution} 
\begin{equation}
    \mu \defas \EE_{\AA}[\mu_{\AA}]\,,
\end{equation}
which by the Riesz representation theorem is the measure that verifies $\int f \dif \mu = \EE_{\AA}[\int f \dif\mu_{\AA}]$ for all measureable $f$. Surprisingly, the expected spectral distribution is the only required characteristic to design optimal algorithms in the average-case.

\subsection{Expected error of first-order methods}

In this section we provide an expression for the expected convergence in terms of the residual polynomial and the expected spectral distribution introduced in the previous section. To go further in the analysis, we have to assume that $\AA$ is a normal matrix.
\begin{assumption}\label{assump:normal}
The (real) random matrix $\AA$ is normal, that is, it verifies $\AA\AA^\top = \AA^\top \AA$. 
\end{assumption}
Normality is equivalent to $\AA$ having the spectral decomposition
$\AA = \UU{\boldsymbol\Lambda} \UU^*$, where $\UU$ is unitary, i.e., $\UU^*\UU=\UU\UU^*=\textbf{I}$. We now have everything to write the expected error of a first-order algorithm applied to (\ref{eq:nonsymop}).

\begin{framed}
\vspace{-1ex}
\begin{restatable}{thm}{expectation} \label{thm:expectation}
Consider the application of a first-order method associated to the sequence of polynomials $\{P_t\}$ (\autoref{prop:link_algo_polynomial}) on the problem (\ref{eq:nonsymop}). Let $\mu$ being the spectral distribution of $\AA$. Under Assumptions \ref{ass:assumption_x0_xstar} and \ref{assump:normal}, we have
\begin{align} \label{eq:expect_dist}
    \mathbb{E} [\dist(\xx_t,\mathcal{X}^\star)] = R^2 \int_{\mathbb{C} \setminus \{0\}} |P_t|^2 \dif\mu\,,
\end{align}
\vspace{-3ex}
\end{restatable}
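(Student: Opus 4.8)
The plan is to start from the polynomial representation of the iterates given by \autoref{prop:link_algo_polynomial}, namely $\xx_t - \xx^\star = P_t(\AA)(\xx_0 - \xx^\star)$, and compute the expected squared distance to the solution set $\mathcal{X}^\star$ in closed form. The first step is to understand $\dist(\xx_t, \mathcal{X}^\star)$ explicitly. Since $\mathcal{X}^\star = \xx^\star + \Ker(\AA)$, projecting onto this affine subspace amounts to projecting $\xx_t - \xx^\star$ onto $\Ker(\AA)^\perp = \mathrm{Range}(\AA^*)$. Using the normality assumption and the spectral decomposition $\AA = \UU \LLambda \UU^*$, I would express $P_t(\AA) = \UU P_t(\LLambda) \UU^*$, so that $\xx_t - \xx^\star = \UU P_t(\LLambda) \UU^* (\xx_0 - \xx^\star)$. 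Writing $\yy \defas \UU^*(\xx_0 - \xx^\star)$, the distance becomes $\sum_{i : \lambda_i \neq 0} |P_t(\lambda_i)|^2 |y_i|^2$, because $P_t(0) = 1$ means that the component of $\xx_t - \xx^\star$ along each zero eigenvalue equals the corresponding component of $\xx_0 - \xx^\star$ and is therefore annihilated by the projection onto $\mathrm{Range}(\AA^*)$. (One must be slightly careful: for normal $\AA$, $\Ker(\AA) = \Ker(\AA^*)$, so the eigenbasis of $\AA$ splits cleanly into the kernel and its orthogonal complement.)

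The second step is to take the expectation over $(\AA, \xx^\star, \xx_0)$. I would condition on $\AA$ first. Since $\xx_0 - \xx^\star$ is independent of $\AA$ (Assumption \ref{ass:assumption_x0_xstar}) and $\UU$ is a deterministic function of $\AA$, we have $\mathbb{E}_{\xx_0, \xx^\star}[\yy \yy^*] = \UU^* \,\mathbb{E}[(\xx_0 - \xx^\star)(\xx_0 - \xx^\star)^\top]\, \UU = \frac{R^2}{d} \UU^* \UU = \frac{R^2}{d} \II_d$, using unitarity. Hence $\mathbb{E}_{\xx_0,\xx^\star}[|y_i|^2] = R^2/d$ for every $i$, and
\begin{equation*}
  \mathbb{E}_{\xx_0,\xx^\star}\big[\dist(\xx_t, \mathcal{X}^\star) \,\big|\, \AA\big] = \frac{R^2}{d} \sum_{i : \lambda_i \neq 0} |P_t(\lambda_i)|^2 = R^2 \int_{\mathbb{C} \setminus \{0\}} |P_t(\lambda)|^2 \dif\mu_{\AA}(\lambda),
\end{equation*}
where the last equality is just the definition of the empirical spectral distribution $\mu_{\AA}$ in \eqref{eq:empirical_spectral_density}, with the point mass at $0$ excluded by restricting the integration domain. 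Taking expectation over $\AA$ and using the definition $\mu = \mathbb{E}_\AA[\mu_\AA]$ (which exchanges expectation and integration against measurable test functions, here $\lambda \mapsto |P_t(\lambda)|^2 \mathbf{1}_{\lambda \neq 0}$) gives the claimed identity.

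The main technical obstacle is the careful treatment of the kernel of $\AA$ and the projection defining $\dist$: one needs the normality assumption precisely to ensure that the eigenvectors for nonzero eigenvalues are orthogonal to $\Ker(\AA)$, so that the distance decomposes diagonally in the eigenbasis and the zero-eigenvalue directions drop out cleanly (explaining why the integral is over $\mathbb{C} \setminus \{0\}$ rather than all of $\mathbb{C}$). A secondary point to handle with a line of justification is the interchange of expectation over $\AA$ with the integral over the spectrum — this is exactly what the Riesz-representation characterization of $\mu$ provides, applied to the bounded measurable function $|P_t|^2$ restricted away from the origin, treating $P_t$ as fixed (it is determined by the algorithm, not by the random instance). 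Once these two points are nailed down, the computation is a short chain of equalities.
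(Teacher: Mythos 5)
Your proof is correct and follows essentially the same route as the paper: exploit normality to diagonalize $\AA$, use Assumption~\ref{ass:assumption_x0_xstar} to reduce each eigencoordinate's conditional second moment to $R^2/d$, drop the kernel directions (giving the integral over $\mathbb{C}\setminus\{0\}$), and finally pass from $\mu_\AA$ to $\mu$ by the Riesz/expectation characterization. The only cosmetic difference is that the paper first introduces the orthogonal projection $\Pi$ onto $\Ker(\AA)$, proves $\dist(\xx_t,\mathcal{X}^\star)=\|\xx_t-\xx^\star\|^2-\|\Pi(\xx_0-\xx^\star)\|^2$ via $P_t(0)=1$, and then manipulates traces, whereas you work coordinate-wise in the unitary eigenbasis from the start — the same computation, differently organized.
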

\end{framed}

Before designing optimal algorithms for certain specific distributions, we compare our setting with the average-case accelerating for minimization problems of \citet{pedregosa2020average}, who proposed optimal \textit{optimization} algorithms in the average-case.

\subsection{Difficulties of First-Order Methods on Games and Related Work} \label{sec:difficulties}

This section compares our contribution with the existing framework of average-case optimal methods for quadratic minimization problems.

\begin{definition} Let $\HH \in \RR^{d \times d}$ be a random symmetric positive-definite matrix and $\xx^\star \in \RR^d$ a random vector. These elements determine the following \textbf{random quadratic minimization problem}
\begin{empheq}{equation*}\tag{OPT}\label{eq:quad_optim}
  \textstyle \min_{\xx \in \RR^d} \big\{ f(\xx) \defas\!\mfrac{1}{2}(\xx\!-\!\xx^\star)^\top\!\HH(\xx\!-\!\xx^\star) \big\}\,.
\end{empheq}
\end{definition}

As in our paper, \citet{pedregosa2020average} find deterministic optimal first-order algorithms in \textit{expectation} w.r.t. the matrix $\HH$, the solution $\xx^{\star}$, and the initialization $\xx_0$. Since they work with problem \eqref{eq:quad_optim}, their problem is equivalent to \eqref{eq:nonsymop} with the matrix $\AA=\HH$. However, they have the \textit{stronger} assumption that the matrix is \textit{symmetric}, which implies being normal. The normality assumption is restrictive in the case of game theory, as they do not always naturally fit such applications. However, this set is expressive enough to consider interesting cases, such as bilinear games, and our experiments show that our findings are also consistent with non-normal matrices.

% Using orthogonal residual polynomials and spectral distributions, they derive the explicit formula of the expected error. Their result is similar to \autoref{thm:expectation}, but the major difference is the domain of the integral. In convex optimization, the domain of eigenvalues is restricted to real positive numbers. Our situation is more complicated as the domain is now a shape in the complex plane, symmetric w.r.t. the real axis. The shape plays a crucial role in the rate of converge of first-order algorithm, as depicted in the work of \citet{azizian2020accelerating,bollapragada2018nonlinear}. The story will be similar here, as the optimal algorithm depends both on the distribution $\mu$ and the shape of its support.

Using orthogonal residual polynomials and spectral distributions, they derive the explicit formula of the expected error. Their result is similar to \autoref{thm:expectation}, but the major difference is the domain of the integral, a real positive line in convex optimization, but a shape in the complex plane in our case. This shape plays a crucial role in the rate of converge of first-order algorithms, as depicted in the work of \citet{azizian2020accelerating,bollapragada2018nonlinear}. % The story will be similar here, as the optimal algorithm depends both on the distribution $\mu$ and the shape of its support.

%In addition to making the analysis more complicated, algorithms that are average-case optimal w.r.t. a distribution in the complex plane may not be easily implementable, as opposed to optimal optimization algorithms. Indeed, in the case of optimization methods, optimal schemes in the average-case follow a simple three-term recurrence. This remarkable property directly follows from \autoref{thm:recurence_orthogonal_polynomials}, characterizing the recurrence that builds the sequence of optimal polynomials associated to the optimal method, orthogonal w.r.t. the weight function $\tau = \lambda \mu(\lambda)$.

%\begin{restatable}{thm}{recurenceorthogonalpolynomials}\label{thm:recurence_orthogonal_polynomials} \citep[\S 2.4]{fischer1996polynomial}
%Assume 
%$
%    \langle \lambda P(\lambda), Q(\lambda) \rangle_{\tau} = \langle P(\lambda), \lambda Q(\lambda) \rangle_{\tau},
%$
%which is satisfied if the support of $\tau$ is a segment in the real line. In this case, the sequence of residual orthogonal polynomials $\{P_i\}$ verifies a three-term recurrence.
%\end{restatable}

%Unfortunately, this is a bad news concerning average-case analysis in our setting, as the support is defined on the complex plane, which does not fit the assumption of \autoref{thm:recurence_orthogonal_polynomials} (in general). %This means the optimal method can be arbitrarily complicated depending on the distribution $\mu$. 
%In the next sections we consider interesting cases where the optimal scheme is identifiable and simple enough.

In the case of optimization methods, they show that optimal schemes in the average-case follow a simple three-term recurrence arising from the three-term recurrence for residual orthogonal polynomials for the measure $\lambda \mu(\lambda)$. Indeed, by \autoref{thm:expectation} the optimal method corresponds to the residual polynomials minimizing $\langle P, P \rangle_{\mu}$, and the following result holds:

\begin{restatable}{thm}{recurenceorthogonalpolynomials}\label{thm:recurence_orthogonal_polynomials} \citep[\S 2.4]{fischer1996polynomial}
When $\mu$ is supported in the real line, the residual polynomial of degree $t$ minimizing $\langle P, P \rangle_{\mu}$ is given by the degree $t$ residual
orthogonal polynomial w.r.t. $\lambda \mu(\lambda)$.
\end{restatable}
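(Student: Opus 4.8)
The plan is to recognize the minimization as a constrained least-squares problem and extract its optimality (normal-equation) condition, which will turn out to be exactly the orthogonality relation defining the orthogonal polynomials of $\lambda\mu$.

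I would first fix the ambient space: real polynomials of degree at most $t$, equipped with the inner product $\langle\cdot,\cdot\rangle_\mu$. In the situation of \eqref{eq:quad_optim} the matrix is symmetric positive definite, so $\mu$ is supported on $(0,\infty)$; assuming it has at least $t+1$ points of support there, $\langle\cdot,\cdot\rangle_\mu$ is positive definite on $\mathbb{R}_t[\lambda]$ and $\lambda\mu$ is again a nonnegative measure with at least $t+1$ points of support. The feasible set $\mathcal{A}_t=\{P:\deg P\le t,\ P(0)=1\}$ is a nonempty affine hyperplane whose direction space is $V=\{Q:\deg Q\le t,\ Q(0)=0\}$, so minimizing the positive-definite quadratic $P\mapsto\langle P,P\rangle_\mu$ over $\mathcal{A}_t$ has a unique minimizer $P_t^\star$, characterized by the first-order condition $\langle P_t^\star,Q\rangle_\mu=0$ for every $Q\in V$.

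The second step is to translate this condition. A polynomial $Q$ of degree $\le t$ vanishes at $0$ iff $Q=\lambda R$ with $\deg R\le t-1$, so $V=\lambda\cdot\mathbb{R}_{t-1}[\lambda]$ and the optimality condition becomes $\int\lambda\,P_t^\star(\lambda)R(\lambda)\,\dif\mu(\lambda)=0$ for all $R$ of degree $\le t-1$, i.e. $\langle P_t^\star,R\rangle_{\lambda\mu}=0$ for all such $R$. Thus $P_t^\star$ is a polynomial of degree $\le t$ that is $\lambda\mu$-orthogonal to every polynomial of strictly lower degree. By the standard theory of orthogonal polynomials on the real line, and non-degeneracy of $\langle\cdot,\cdot\rangle_{\lambda\mu}$ up to degree $t$, this space is one-dimensional, spanned by the degree-$t$ orthogonal polynomial $\pi_t$ of $\lambda\mu$; the constraint $P_t^\star(0)=1$ (which in particular forces $\pi_t(0)\ne0$) then pins down $P_t^\star=\pi_t/\pi_t(0)$, i.e. the residual orthogonal polynomial of degree $t$ with respect to $\lambda\mu$, as claimed.

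The only content beyond this bookkeeping is ensuring that the objects exist and are unique — positive-definiteness of $\langle\cdot,\cdot\rangle_\mu$, non-degeneracy of $\langle\cdot,\cdot\rangle_{\lambda\mu}$ up to degree $t$, and $\pi_t(0)\neq0$ — all of which follow from the support assumption above and are the standing hypotheses in \citet[\S 2.4]{fischer1996polynomial}, so I would cite that reference for these standard facts rather than reprove them. I expect the identification $V=\lambda\cdot\mathbb{R}_{t-1}[\lambda]$, together with absorbing the factor $\lambda$ into the measure to pass from $\langle\cdot,\cdot\rangle_\mu$ to $\langle\cdot,\cdot\rangle_{\lambda\mu}$, to be the crux; once that is in place the result is immediate from the projection characterization of the minimizer.
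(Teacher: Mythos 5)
Your proof is correct and matches the standard argument: the paper itself gives no proof here but cites Fischer~\citep[\S 2.4]{fischer1996polynomial}, whose argument is precisely the one you reconstruct — characterize the minimizer of the positive-definite quadratic over the affine slice $\{P(0)=1\}$ by orthogonality to the direction space $V=\{Q:\deg Q\le t,\ Q(0)=0\}$, observe $V=\lambda\cdot\mathbb{R}_{t-1}[\lambda]$, and absorb the factor $\lambda$ into the measure to recognize $P_t^\star$ as the normalized degree-$t$ orthogonal polynomial of $\lambda\mu$. Your handling of the auxiliary hypotheses (non-degeneracy of $\langle\cdot,\cdot\rangle_{\mu}$ and $\langle\cdot,\cdot\rangle_{\lambda\mu}$, $\pi_t(0)\neq 0$) by deferring to the cited reference is also appropriate for a result stated as a citation.
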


However, the analogous result does not hold for general measures in $\mathbb{C}$, and hence our arguments will make use of the following \autoref{thm:assche} instead, which links the residual polynomial of degree at most $t$ that minimizes $\langle P, P \rangle_{\mu}$ to the sequence of orthonormal polynomials for $\mu$.

\begin{restatable}{thm}{assche}[Theorem 1.4 of \cite{assche97orthogonal}] \label{thm:assche}
Let $\mu$ be a positive Borel measure in the complex plane. The minimum of the integral $\int_{\mathbb{C}} |P(\lambda)|^2 \dif\mu(\lambda)$ over residual polynomials $P$ of degree lower or equal than $t$ is uniquely attained by the polynomial
\begin{align}
    P^{\star}(\lambda) = \frac{\sum_{k=0}^t \phi_k(\lambda) \phi_k(0)^*}{\sum_{k=0}^t |\phi_k(0)|^2}, %\quad \text{with valye} \quad \min \int_{\mathbb{C} \setminus \{0\}} |P_t(\lambda)|^2 \dif\mu(\lambda) = 
    \quad %\text{with optimal value }
    \text{ with optimal value }
    \int_{\mathbb{C}} |P^{\star}(\lambda)|^2 \dif\mu(\lambda) = \frac{1}{\sum_{k=0}^t |\phi_k(0)|^2}\,,
\end{align}
where $(\phi_k)_k$ is the orthonormal sequence of polynomials with respect to the inner product $\langle\cdot,\cdot\rangle_{\mu}$.
% \begin{align} \label{eq:scalar_product}
%     \langle P, Q \rangle \defas \int_{\mathbb{C}} P(\lambda) Q(\lambda)^* \dif\mu(\lambda).
% \end{align}
\end{restatable}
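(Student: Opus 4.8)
The plan is to recognize this as an orthogonal-projection computation in the Hilbert space $L^2(\mu)$. Let $\Pi_t$ denote the space of polynomials of degree at most $t$, viewed as a finite-dimensional subspace of $L^2(\mu)$; by construction of the orthonormal sequence, $\{\phi_0,\dots,\phi_t\}$ is an orthonormal basis of $\Pi_t$ for $\langle\cdot,\cdot\rangle_\mu$. Thus every residual polynomial $P$ of degree $\le t$ can be written uniquely as $P=\sum_{k=0}^t c_k\phi_k$ with $c_k=\langle P,\phi_k\rangle_\mu\in\mathbb{C}$, and Parseval's identity gives $\int_{\mathbb{C}}|P|^2\dif\mu=\sum_{k=0}^t|c_k|^2$. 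The residual constraint $P(0)=1$ translates into the single linear equation $\sum_{k=0}^t c_k\phi_k(0)=1$. So the optimization reduces to: minimize $\sum_{k=0}^t|c_k|^2$ over $(c_k)\in\mathbb{C}^{t+1}$ subject to $\sum_{k=0}^t c_k\phi_k(0)=1$.

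Next I would dispatch the degenerate possibility: since $\phi_0$ is a nonzero constant polynomial (equal to $\mu(\mathbb{C})^{-1/2}$), we have $\phi_0(0)\neq 0$, so $S_t\defas\sum_{k=0}^t|\phi_k(0)|^2>0$ and the constraint is feasible. Then the finite-dimensional Cauchy--Schwarz inequality gives
\[
1=\Bigl|\sum_{k=0}^t c_k\,\phi_k(0)\Bigr|^2\le\Bigl(\sum_{k=0}^t|c_k|^2\Bigr)\Bigl(\sum_{k=0}^t|\phi_k(0)|^2\Bigr)=S_t\sum_{k=0}^t|c_k|^2,
\]
hence $\int_{\mathbb{C}}|P|^2\dif\mu\ge 1/S_t$. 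Equality holds exactly when $(c_k)_k$ is proportional to $(\phi_k(0)^*)_k$, i.e. $c_k=\gamma\,\phi_k(0)^*$; substituting into the constraint forces $\gamma S_t=1$, so $\gamma=1/S_t$. Plugging back gives precisely $P^\star(\lambda)=S_t^{-1}\sum_{k=0}^t\phi_k(\lambda)\phi_k(0)^*$ with value $1/S_t$, and uniqueness of the minimizer follows from the equality case of Cauchy--Schwarz combined with uniqueness of the coordinate expansion in the basis $\{\phi_k\}$.

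An equivalent (and perhaps cleaner) phrasing I could use instead runs through the Christoffel--Darboux kernel $K_t(\lambda,w)=\sum_{k=0}^t\phi_k(\lambda)\phi_k(w)^*$, which is the reproducing kernel of $\Pi_t\subset L^2(\mu)$: $\langle P,K_t(\cdot,w)\rangle_\mu=P(w)$ for all $P\in\Pi_t$. Then $P(0)=1$ reads $\langle P,K_t(\cdot,0)\rangle_\mu=1$, Cauchy--Schwarz yields $1\le\|P\|_\mu^2\,K_t(0,0)$, and the minimizer is the normalized kernel section $P^\star=K_t(\cdot,0)/K_t(0,0)$, matching the claimed formula since $K_t(0,0)=S_t$. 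I do not anticipate a real obstacle here: the only points requiring care are the standing assumptions implicit in the statement, namely that polynomials up to degree $t$ lie in $L^2(\mu)$ and are linearly independent there (equivalently $\mathrm{supp}\,\mu$ has at least $t+1$ points), so that $\phi_0,\dots,\phi_t$ genuinely exist and form a basis; if $\mu$ is finitely supported with fewer points one simply truncates. Once this Hilbert-space set-up is in place, the minimization itself is a one-line Cauchy--Schwarz argument.
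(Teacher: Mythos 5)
The paper does not prove this result; it imports it verbatim from Van Assche (Theorem~1.4 of the cited reference) as a known fact, so there is no in-paper argument to compare against. Your proof is correct and is the standard one: expand $P$ in the orthonormal basis $\{\phi_0,\dots,\phi_t\}$ of $\Pi_t\subset L^2(\mu)$, use Parseval to turn the objective into $\sum_{k}|c_k|^2$, reduce the affine constraint $P(0)=1$ to $\sum_k c_k\phi_k(0)=1$, and apply Cauchy--Schwarz; the equality case pins down $c_k=\phi_k(0)^*/S_t$, which is exactly the normalized Christoffel--Darboux kernel section $K_t(\cdot,0)/K_t(0,0)$. The side conditions you flag --- that $\mu$ is a finite positive measure so $\phi_0$ is a nonzero constant and $\phi_0(0)\neq 0$ makes the constraint feasible, and that $\mathrm{supp}\,\mu$ has at least $t+1$ points so the $\phi_k$ exist and $\{\phi_0,\dots,\phi_t\}$ is genuinely a basis of $\Pi_t$ --- are precisely the implicit regularity assumptions in Van Assche's statement. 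There is no gap.
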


In the next sections we consider cases where the optimal scheme is identifiable.

\section{Average-case Optimal Methods for Bilinear Games}

We consider the problem of finding a Nash equilibrium of the zero-sum minimax game given by
\begin{equation}
    \min_{\thetaa_1}\max_{\thetaa_2}\mathbf{\ell}(\thetaa^{\vphantom{T}}_1,\,\thetaa^{\vphantom{T}}_2)\defas (\thetaa_1 - \thetaa_1^\star)^\top \MM (\thetaa_2^{\vphantom{T}} - \thetaa_2^\star)\,.
\end{equation}
Let $\thetaa_1, \thetaa_1^\star \in \mathbb{R}^{d_1}, \thetaa_2, \thetaa_2^\star \in \mathbb{R}^{d_2}, \MM \in \mathbb{R}^{d_1 \times d_2}$ and $d \defas d_1 + d_2$. The vector field of the game \citep{mechanics2018balduzzi} is defined as $F(\xx) = \AA(\xx-\xx^{\star})$, where
\begin{align} \label{eq:bilinear_def_A}
    F(\thetaa_1,\,\thetaa_2) = 
    \begin{bmatrix}
        \hphantom{-}\nabla_{\thetaa_1} \ell(\thetaa_1,\,\thetaa_2) \\
        -\nabla_{\thetaa_2} \ell(\thetaa_1,\,\thetaa_2)
    \end{bmatrix}
    = \underbrace{\begin{bmatrix}
    0 & \MM \\
    -\MM^\top & 0
    \end{bmatrix}}_{=\AA}
    \Bigg( \underbrace{\begin{bmatrix}
        \thetaa_1 \\ 
        \thetaa_2
    \end{bmatrix}}_{=\xx} - \underbrace{\begin{bmatrix}
     \thetaa_1^\star \\ 
     \thetaa_2^\star
    \end{bmatrix}}_{=\xx^\star}
    \Bigg) = \AA(\xx-\xx^{\star})\,.
\end{align}
% where
% \begin{align}\label{eq:bilinear_def_A}
%     \AA = \begin{bmatrix}
%     0 & \MM \\
%     -\MM^\top & 0
%     \end{bmatrix}, \quad \xx = \begin{bmatrix}
%         \thetaa_1 \\ 
%         \thetaa_2
%     \end{bmatrix}, \quad \boldsymbol{x}^\star = \begin{bmatrix}
%      %\hphantom{-}
%      \thetaa_1^\star \\ 
%      \thetaa_2^\star
%     \end{bmatrix}
% \end{align}
As before, $\mathcal{X}^{\star}$ denotes the set of points $\xx$ such that $F(\xx) = 0$, which is equivalent to the set of Nash equilibrium. If $\MM$ is sampled independently from $\xx_0, \xx^\star$ and $\xx_0 - \xx^\star$ has covariance $\frac{R^2}{d} \II_d$, \autoref{ass:assumption_x0_xstar} is fulfilled. Since $\AA$ is skew-symmetric, it is in particular normal and \autoref{assump:normal} is also satisfied.

We now show that the optimal average-case algorithm to solve bilinear problems is Hamiltonian gradient descent with momentum, described below in its general form. Contrary to the methods in \citet{azizian2020accelerating}, the method we propose is \textit{anytime} (and not only asymptotically) average-case optimal.

\begin{framed}
\vspace{-0.5ex}
\textbf{Optimal average-case algorithm for bilinear games.}\\
~ \\
\textbf{Initialization.} $\xx_{-1} =\xx_0 = \big(\thetaa_{1,0}, \; \thetaa_{2,0}\big)$, sequence $\{h_t,m_t\}$ given by \autoref{thm:bilinear_thm1}. \\
\textbf{Main loop.} For $t\geq 0,$
\begin{align}
\begin{split}
    \gg_t & \textstyle  = F(\xx_{t} - F(\xx_t)) - F(\xx_t) \qquad\quad \left(= \frac{1}{2} \nabla  \|F(\xx_t)\|^2 \; \text{ by \eqref{eq:relation_grad_extragrad}}\right)\\
    \xx_{t+1} & = \xx_{t} - h_{t+1}\gg_t +m_{t+1} (\xx_{t-1} - \xx_{t})  
\end{split}\label{eq:opti_scheme_bilinear}
\end{align}
\end{framed}
The quantity $\frac{1}{2} \|F(\xx)\|^2$ is commonly known as the Hamiltonian of the game \citep{mechanics2018balduzzi}, hence the name \textit{Hamiltonian gradient descent}. Indeed, $\gg_t = \nabla \left(\frac{1}{2} \|F(\xx)\|^2\right)$ when $F$ is affine: 
\begin{align}
\begin{split}
    F(\xx - F(\xx)) - F(\xx) &= \AA(\xx - \AA (\xx - \xx^{\star}) - \xx^{\star}) - \AA (\xx - \xx^{\star}) = - \AA(\AA (\xx - \xx^{\star})) \\ 
    &= \AA^\top (\AA (\xx - \xx^{\star})) = \nabla \left(\frac{1}{2} \|\AA (\xx - \xx^{\star})\|^2 \right) = \nabla \left(\frac{1}{2} \|F(\xx)\|^2 \right)\,. \label{eq:relation_grad_extragrad}
\end{split}
\end{align}
The following theorem shows that \eqref{eq:opti_scheme_bilinear} is indeeed the optimal average-case method associated to the minimization problem $\min_\xx \left(\frac{1}{2} \|F(\xx)\|^2\right)$, as the following theorem shows.

\begin{framed}
\vspace{-1ex}
\begin{restatable}{thm}{bilinearthmone} \label{thm:bilinear_thm1}
%Assume that the spectral density of the random matrix $\AA$ has support included in the union of imaginary segments $[i\sqrt{l},i\sqrt{L}] \sqcup [-i\sqrt{L},-i\sqrt{l}]$. Then, 
Suppose that \autoref{ass:assumption_x0_xstar} holds and that the spectral distribution of $\MM \MM^\top$ is absolutely continuous with respect to the Lebesgue measure. %$\mu_{\MM \MM^\top}$ has density $\rho_{\MM \MM^\top}$ with respect to the Lebesgue measure. 
Then, %for some choice of $m_t,\,h_t$, 
the method \eqref{eq:opti_scheme_bilinear} is average-case optimal for bilinear games %In particular, 
when $h_t,\,m_t$ are chosen to be the coefficients of the average-case optimal minimization of $\frac{1}{2} \|F(\xx)\|^2$. %  the one minimizing is the average-case optimal first-order method for the minimization problem $\min_\xx \frac{1}{2} \|F(\xx)\|^2 = \frac{1}{2}$, described in .
% w.r.t.  $\mathbb{E} \dist(\xx_t,\mathcal{X}^{\star})$ 
\end{restatable}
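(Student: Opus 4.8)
The plan is to reduce the bilinear game problem to an equivalent minimization problem over a lower-dimensional spectral measure, and then invoke the characterization of average-case optimal methods already available for minimization. First I would compute the eigenstructure of $\AA = \SmallMatrix{0 & \MM \\ -\MM^\top & 0}$: since $\AA^2 = \SmallMatrix{-\MM\MM^\top & 0 \\ 0 & -\MM^\top\MM}$, the eigenvalues of $\AA$ come in conjugate pairs $\pm i\sqrt{\sigma}$, where $\sigma$ ranges over the eigenvalues of $\MM\MM^\top$ (together with extra zeros if $d_1 \neq d_2$). Hence the expected spectral distribution $\mu$ of $\AA$ is supported on the imaginary axis, symmetric under conjugation, and is the pushforward of the spectral distribution $\nu$ of $\MM\MM^\top$ under $\sigma \mapsto \pm i\sqrt{\sigma}$. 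By \autoref{thm:expectation}, the expected error of a first-order method with residual polynomial $P_t$ is $R^2 \int |P_t|^2 \dif\mu$.

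The key observation is a change of variables: because $\mu$ is supported on $\{i\sqrt{\sigma} : \sigma \in \supp\nu\} \cup \{-i\sqrt{\sigma}\}$ and is conjugation-symmetric, a residual polynomial $P_t$ evaluated on this set can, after symmetrization, be written as $Q_t(\lambda^2)$ restricted to $\lambda \in i\mathbb{R}$ — i.e. as $Q_t(-\sigma)$ for a residual polynomial $Q_t$ of degree $\lfloor t/2 \rfloor$ in the variable $\sigma$ — precisely when the iteration is of the Hamiltonian type, since $\AA^\top\AA = -\AA^2$ acts on each pair as multiplication by $\sigma$. Concretely, I would show that the iterates of scheme \eqref{eq:opti_scheme_bilinear} satisfy $\xx_t - \xx^\star = Q_t(\AA^\top\AA)(\xx_0 - \xx^\star)$ with $Q_t$ the residual polynomial of the minimization method applied to the matrix $\AA^\top\AA$ (using \eqref{eq:relation_grad_extragrad}, each step involves only $\gg_t$, which is a power of $\AA^\top\AA$ times the error, so momentum recurrences in $\gg$ translate to three-term recurrences in $Q_t(\AA^\top\AA)$). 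Then $\int_{\mathbb{C}\setminus\{0\}} |Q_t(\sigma)|^2 \dif\mu(i\sqrt{\sigma}\text{-pushforward}) = \int_{(0,\infty)} |Q_t(\sigma)|^2 \dif\tilde\nu(\sigma)$ where $\tilde\nu$ is the spectral distribution of $\AA^\top\AA$, which equals that of $\MM\MM^\top$ up to the zero eigenvalues that are annihilated on $\mathbb{C}\setminus\{0\}$. Thus minimizing the bilinear-game error over Hamiltonian-type methods is exactly the minimization-problem objective $\int |Q_t|^2 \dif\tilde\nu$, whose optimum is the average-case optimal minimization method by \autoref{thm:recurence_orthogonal_polynomials}.

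The remaining — and main — obstacle is to argue that no first-order method can do better than the best Hamiltonian-type method, i.e. that the constraint $\xx_i \in \xx_0 + \Span\{F(\xx_j)\}_{j<i}$ does not allow escaping the class of polynomials that factor through $\lambda^2$. Here I would use the conjugation symmetry of $\mu$: if $P_t$ is any residual polynomial, write $P_t(\lambda) = P_t^{\mathrm{even}}(\lambda^2) + \lambda P_t^{\mathrm{odd}}(\lambda^2)$; since $\mu$ is invariant under $\lambda \mapsto -\lambda$ and under conjugation (and supported on $i\mathbb{R}$, where $\lambda^* = -\lambda$), the cross terms in $|P_t|^2$ integrate against an odd function of the real variable $\sigma = -\lambda^2$... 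I should be careful here — the cross term is $2\Re(\lambda P_t^{\mathrm{even}}(\lambda^2) \overline{P_t^{\mathrm{odd}}(\lambda^2)})$ and on the imaginary axis $\lambda^2$ is real negative, so this becomes a real multiple of $\Re(\lambda) = 0$; hence $\int |P_t|^2 \dif\mu = \int(|P_t^{\mathrm{even}}|^2 + |\lambda|^2|P_t^{\mathrm{odd}}|^2)\dif\mu$, and the residual constraint $P_t^{\mathrm{even}}(0) + 0 = 1$ forces $P_t^{\mathrm{even}}$ to be a residual polynomial while $P_t^{\mathrm{odd}}$ is unconstrained — so the optimum takes $P_t^{\mathrm{odd}} = 0$ and reduces to the even case, i.e. to a polynomial in $\lambda^2$, of degree at most $\lfloor t/2\rfloor$ in $\sigma$. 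This degree count must match what scheme \eqref{eq:opti_scheme_bilinear} achieves at iteration $t$: each iteration evaluates $F$ once at a shifted point, producing one extra factor of $\AA^\top\AA$ every step, so after $t$ steps we have a degree-$t$ polynomial in $\AA^\top\AA$ hence degree $t$ in $\sigma$ — wait, that overshoots $\lfloor t/2\rfloor$, which actually means the Hamiltonian scheme is *not handicapped* and can realize the full even-polynomial optimum; I would need to verify the normalization of "one iteration" consistently between the two settings (the paper's convention that $\gg_t$ costs effectively two gradient evaluations resolves this). Finally, I would invoke the absolute-continuity hypothesis on the spectral distribution of $\MM\MM^\top$ to guarantee that the orthonormal polynomials for $\tilde\nu$ exist at every degree (no atoms causing degeneracy), so that \autoref{thm:assche}/\autoref{thm:recurence_orthogonal_polynomials} applies and the optimal method is well-defined and unique.
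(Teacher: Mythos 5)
Your reduction is correct and matches the paper's overall strategy (compute $\mu_{\AA}$ via the eigenvalues $\pm i\sqrt{\sigma}$ of $\MM\MM^\top$, apply \autoref{thm:expectation}, and identify the iterates of \eqref{eq:opti_scheme_bilinear} with residual polynomials in $\AA^\top\AA = -\AA^2$), but the \emph{key technical lemma} — that the optimal residual polynomial is even in $\lambda$ — is obtained by a genuinely different and more elementary route. The paper first establishes a three-term recurrence for polynomials orthogonal w.r.t.\ $\mu_{\AA}$ (Prop.~\ref{prop:3termcond}), then proves by induction that the orthonormal sequence alternates parity and has real coefficients (Prop.~\ref{prop:orthonormal_even}), and finally plugs this into \autoref{thm:assche} to conclude that the $\phi_k(0)$ vanish for $k$ odd and thus the minimizer for even $t$ is even (Prop.~\ref{prop:even_min}). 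You instead decompose an arbitrary real-coefficient residual polynomial as $P_t(\lambda) = P^{\mathrm{even}}(\lambda^2) + \lambda P^{\mathrm{odd}}(\lambda^2)$ and note that on $i\mathbb{R}$ the cross term in $|P_t|^2$ has real part zero \emph{pointwise} (since $P^{\mathrm{even}}(\lambda^2)\bar\lambda P^{\mathrm{odd}}(\lambda^2)$ is a real number times the purely imaginary $\bar\lambda$); the objective therefore decouples, the residual constraint touches only $P^{\mathrm{even}}$, and the minimizer sets $P^{\mathrm{odd}}=0$. This sidesteps the orthogonal-polynomial machinery entirely for the parity step and works directly in the class of real-coefficient polynomials, which is all that first-order methods produce — the paper proves the stronger complex-coefficient version and deduces the real one a fortiori. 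The two worries you flagged resolve exactly as you expected: the degree-count "overshoot" is the $t\leftrightarrow 2t$ iteration-counting convention (one step of \eqref{eq:opti_scheme_bilinear} costs two evaluations of $F$ and is compared against the $2t$-span in \eqref{eq:optimal_algo_problem}), and the absolute-continuity hypothesis enters exactly where you put it, to ensure the orthonormal sequence for $\mu_{\MM\MM^\top}$ exists at every degree so \autoref{thm:recurence_orthogonal_polynomials} applies. Your approach buys simplicity at the parity step; the paper's approach buys the stronger complex-coefficient uniqueness statement and reuses lemmas (Props.~\ref{prop:3termcond}, \ref{prop:orthonormal_even}) that feed into \autoref{thm:opt_alg}.
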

\vspace{-1ex}
\end{framed}
\textbf{How to find optimal coefficients?} Since $\frac{1}{2} \|F(\xx)\|^2$ is a quadratic problem, the coefficients $\{h_t,\,m_t\}$ can be found using the average-case framework for quadratic minimization problems of \citep[Theorem 3.1]{pedregosa2020average}. 

\textit{Proof sketch.} When computing the optimal polynomial $\xx_{t} = P_t(\AA)(\xx_0-\xx^{\star})$, we have that the residual orthogonal polynomial $P_t$ behaves differently if $t$ is even or odd.
\begin{itemize}[leftmargin=*]
    \item \textbf{Case 1: $t$ is even.} In this case, we observe that the polynomial $P_t(\AA)$ can be expressed as $Q_{t/2}(-\AA^2)$, where $(Q_t)_{t \geq 0}$ is the sequence of orthogonal polynomials w.r.t. the expected spectral density of $-\AA^2$, whose eigenvalues are real and positive. This gives the recursion in \eqref{eq:opti_scheme_bilinear}.
    % \item \textbf{Case 2: $t$ is odd.} In such case, there is no residual orthogonal polynomial of degree $t$% , because $P_t(0) = 0$ for all sequence $(P_t)_{t \geq 0}$ of orthogonal polynomials. Thus, the odd iterations of the algorithm do not correspond to an update on $(\xx_t)_{t \geq 0}$, but to the intermediate computation of $g_t$ in \eqref{eq:opti_scheme_bilinear}.
    \item \textbf{Case 2: $t$ is odd.} There is no residual orthogonal polynomial of degree $t$ for $t$ odd. Instead, odd iterations do correspond to the intermediate computation of $\gg_t$ in \eqref{eq:opti_scheme_bilinear}, but not to an actual iterate.
\end{itemize}

\subsection{Particular case: \texorpdfstring{$\mathbf{M}$}{M} with i.i.d. components}

We now show the optimal method when the entries of $\MM$ are i.i.d. sampled. For simplicity, we order the players such that $d_1 \leq d_2$.

\begin{assumption} \label{ass:marchenko}
Assume that each component of $\MM$ is sampled iid from a distribution of mean 0 and variance $\sigma^2$, and we take $d_1, d_2 \rightarrow \infty$ with $\frac{d_1}{d_2} \rightarrow r < 1$.
\end{assumption}

In such case, the spectral distribution of $\frac{1}{d_2}\MM \MM^\top$ tends to the Marchenko-Pastur law, supported in $[\ell,L]$ and with density:
\begin{align}
    \rho_{MP}(\lambda) \defas \frac{\sqrt{(L-\lambda)(\lambda-\ell)}}{2\pi \sigma^2 r \lambda}, \quad \text{where } L \defas \sigma^2 (1+\sqrt{r})^2, \ell \defas \sigma^2 (1-\sqrt{r})^2.
\end{align}

% For this particular case, \autoref{thm:expectation} and \autoref{prop:even_min} in \autoref{sec:appendix_b} imply
% \begin{align}
%     \mathbb{E} \dist(\xx_t,\mathcal{X}^{\star})^2 = \frac{2 R^2}{1 + \frac{1}{r}} \int_{[i\sqrt{\ell},i\sqrt{L}] \sqcup [-i\sqrt{L},-i\sqrt{\ell}]} |P_t(\lambda)|^2 \frac{\sqrt{(L-|\lambda|^2)(|\lambda|^2-\ell)}}{ \pi \sigma^2 r |\lambda|} dx
% \end{align}

% \begin{framed}
% \vspace{-1ex}
\begin{restatable}{prop}{bilinearthm2} \label{thm:bilinear_thm2}
When $\MM$ satisfies \autoref{ass:marchenko}, the optimal parameter of scheme \eqref{eq:opti_scheme_bilinear} are
\begin{align}
\begin{split} \label{eq:opti_scheme}
    \textstyle h_t = -\frac{\delta_t}{\sigma^2 \sqrt{r}}, \;\; m_t = 1+\rho\delta_t,\quad \text{where} \; \; \rho = \frac{1 + r}{\sqrt{r}}, \;\; \delta_t = (-\rho-\delta_{t-1})^{-1}, \;\; \delta_0 = 0.
    % \rho &= \frac{1 + r}{\sqrt{r}}, \quad \delta_0 = 0, \quad \delta_t = (-\rho - \delta_{t-1})^{-1}, \\
    % \xx_t &= \xx_{t-1} + (1+\rho \delta_t) (\xx_{t-2} - \xx_{t-1}) + \delta_t \frac{\nabla (\frac{1}{2} \|F(\xx_{t-1})\|^2)}{\sigma^2 \sqrt{r}}
\end{split}
\end{align}
% achieves the optimal rate among first-order algorithms.
\end{restatable}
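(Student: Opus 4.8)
The plan is to derive the coefficients directly from \autoref{thm:bilinear_thm1}, which reduces the task to computing the average-case optimal first-order method for the quadratic minimization of the Hamiltonian $g(\xx)\defas\tfrac12\|F(\xx)\|^2=\tfrac12(\xx-\xx^\star)^\top\HH(\xx-\xx^\star)$, whose Hessian is
\[
  \HH=\AA^\top\AA=-\AA^2=\begin{bmatrix}\MM\MM^\top & 0\\ 0 & \MM^\top\MM\end{bmatrix}.
\]
First I would pin down the relevant measure. The nonzero eigenvalues of $\MM\MM^\top$ and of $\MM^\top\MM$ coincide, and since $d_1\le d_2$ the matrix $\MM^\top\MM$ contributes $d_2-d_1$ additional zero eigenvalues; hence under \autoref{ass:marchenko} the expected spectral distribution of $\HH$ is $\tfrac{2d_1}{d}\,\rho_{MP}+\tfrac{d_2-d_1}{d}\,\delta_0$, with $\rho_{MP}$ the Marchenko--Pastur density on $[\ell,L]$ (up to the normalization convention for $\MM$). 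Because the optimality criterion of \autoref{thm:expectation} integrates $|P_t|^2$ over $\mathbb{C}\setminus\{0\}$, the atom at $0$ plays no role, so the optimal residual polynomial $Q_t$ (with $\xx_t-\xx^\star=Q_t(\HH)(\xx_0-\xx^\star)$) is the one minimizing $\langle P,P\rangle_{\rho_{MP}}$ over residual polynomials of degree $\le t$.

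By \autoref{thm:recurence_orthogonal_polynomials} this minimizer is the degree-$t$ residual orthogonal polynomial with respect to the measure $\lambda\,\rho_{MP}(\lambda)\,\dif\lambda$. The crucial simplification is that the factor $\lambda$ cancels the $\lambda$ in the denominator of $\rho_{MP}$, leaving a density proportional to $\sqrt{(L-\lambda)(\lambda-\ell)}$ on $[\ell,L]$, i.e.\ a rescaled Wigner semicircle law, whose orthogonal polynomials are the Chebyshev polynomials of the second kind. Next I would make the affine substitution $y(\lambda)=\tfrac{4\lambda-2(L+\ell)}{L-\ell}$, which maps $[\ell,L]$ onto $[-2,2]$; using $L=\sigma^2(1+\sqrt r)^2$ and $\ell=\sigma^2(1-\sqrt r)^2$ it simplifies to $y(\lambda)=\tfrac{\lambda}{\sigma^2\sqrt r}-\rho$ with $\rho=\tfrac{1+r}{\sqrt r}$ and $y(0)=-\rho$ (and $\rho>2$, so $0$ lies outside $[\ell,L]$ and the normalization below is well defined). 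Writing $\widetilde U_t$ for the rescaled Chebyshev polynomial obeying $\widetilde U_{t+1}(y)=y\,\widetilde U_t(y)-\widetilde U_{t-1}(y)$, $\widetilde U_0=1$, $\widetilde U_{-1}=0$, the residual orthogonal polynomial is $Q_t(\lambda)=\widetilde U_t(y(\lambda))/\widetilde U_t(-\rho)$. Setting $\delta_t\defas\widetilde U_{t-1}(-\rho)/\widetilde U_t(-\rho)$, the recurrence evaluated at $y=-\rho$ gives precisely $\delta_t=(-\rho-\delta_{t-1})^{-1}$ with $\delta_0=0$.

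It then remains to turn the three-term recurrence for $Q_t$ into the iteration \eqref{eq:opti_scheme_bilinear}. Dividing $\widetilde U_{t+1}(y(\lambda))=y(\lambda)\widetilde U_t(y(\lambda))-\widetilde U_{t-1}(y(\lambda))$ by $\widetilde U_{t+1}(-\rho)$ and substituting the definitions of $Q_t$ and $\delta_t$ gives
\[
  Q_{t+1}(\lambda)=\tfrac{\delta_{t+1}}{\sigma^2\sqrt r}\,\lambda\,Q_t(\lambda)-\rho\,\delta_{t+1}\,Q_t(\lambda)-\delta_t\delta_{t+1}\,Q_{t-1}(\lambda).
\]
Evaluating at $\HH$, applying to $\xx_0-\xx^\star$, and using $\HH(\xx_t-\xx^\star)=\gg_t$ (which is \eqref{eq:relation_grad_extragrad}) yields $\xx_{t+1}-\xx^\star=\tfrac{\delta_{t+1}}{\sigma^2\sqrt r}\gg_t-\rho\delta_{t+1}(\xx_t-\xx^\star)-\delta_t\delta_{t+1}(\xx_{t-1}-\xx^\star)$. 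Matching this against the scheme rewritten as $\xx_{t+1}-\xx^\star=-h_{t+1}\gg_t+(1-m_{t+1})(\xx_t-\xx^\star)+m_{t+1}(\xx_{t-1}-\xx^\star)$ gives $h_{t+1}=-\delta_{t+1}/(\sigma^2\sqrt r)$ together with the two identifications $m_{t+1}=-\delta_t\delta_{t+1}$ and $m_{t+1}=1+\rho\delta_{t+1}$, which agree exactly because of the $\delta$-recurrence; relabeling $t+1\mapsto t$ yields the stated formulas.

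I expect the main obstacle to be the bookkeeping in the first two steps rather than any deep difficulty: carefully arguing that the zero eigenvalues of $\MM^\top\MM$ drop out (both from the integral over $\mathbb{C}\setminus\{0\}$ and from the multiplication by $\lambda$), verifying that the absolute-continuity hypothesis of \autoref{thm:bilinear_thm1} holds in the Marchenko--Pastur limit (where $r<1$ guarantees no atom at $0$), and tracking all normalization constants through the affine change of variables so that $\rho$, $h_t$ and $m_t$ come out with exactly the claimed constants --- including identifying which scaling of $\MM$ makes the stated $h_t$ (free of $d_2$) correct.
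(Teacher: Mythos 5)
Your proposal is correct and follows the same strategy as the paper: invoke \autoref{thm:bilinear_thm1} to reduce to the average-case optimal quadratic minimizer of the Hamiltonian, then observe that the relevant spectral law is Marchenko--Pastur and use the known Chebyshev-of-the-second-kind optimal scheme. The only difference is that the paper's proof is a one-line delegation to \citet[Section 5]{pedregosa2020average}, whereas you unpack that reference in full: you apply \autoref{thm:recurence_orthogonal_polynomials} directly, notice that the $\lambda$-weighting cancels the $1/\lambda$ in $\rho_{MP}$ to produce a semicircle, pass to the Chebyshev recurrence on $[-2,2]$ via the affine map $y(\lambda)=\lambda/(\sigma^2\sqrt r)-\rho$, and read off $h_t,m_t$ by matching coefficients --- including verifying the two independent expressions for $m_{t+1}$ agree via the $\delta$-recurrence, which the paper implicitly takes on faith. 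Your closing remark about scaling is also well taken: the coefficients as stated are free of $d_2$, so the statement tacitly assumes the vector field is built from $\MM/\sqrt{d_2}$ (or equivalently that $\sigma^2$ absorbs the $1/d_2$), a convention the paper does not make explicit.
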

% \vspace{-2ex}
% \end{framed}

% \begin{framed}
% \begin{restatable}{thm}{bilinearthm2} \label{thm:bilinear_thm2}
% Given an initialization $\xx_0 = (\thetaa_{1,0}, \thetaa_{2,0})$ and defining $\xx_{-1} = 0$, the scheme defined for $t \geq 1$ as
% \begin{align}
% \begin{split} \label{eq:opti_scheme}
%     \rho &= \frac{1 + r}{\sqrt{r}}, \quad \delta_0 = 0, \quad \delta_t = (-\rho - \delta_{t-1})^{-1}, \\
%     \xx_t &= \xx_{t-1} + (1+\rho \delta_t) (\xx_{t-2} - \xx_{t-1}) + \delta_t \frac{\nabla (\frac{1}{2} \|F(\xx_{t-1})\|^2)}{\sigma^2 \sqrt{r}}
% \end{split}
% \end{align}
% achieves the optimal rate among first-order algorithms.
% \end{restatable}
% \end{framed}
\begin{proof}
By \autoref{thm:bilinear_thm1}, the problem reduces to finding the optimal average-case algorithm for the problem $\min_{\xx}\frac{1}{2} \|F(\xx)\|^2$. Since the expected spectral distribution of $\frac{1}{d_2} \MM \MM^\top$ is the Marchenko-Pastur law, we can use the optimal algorithm from \citep[Section 5]{pedregosa2020average}.% for quadratic minimization problems under the Marchenko-Pastur distribution.
\end{proof}

%\textcolor{red}{Edge because here, it depends on the length of an interval on the complex axis. Moreover, the next theorem shows that, as predicted by Azzizian et al, the optimal(est) method is indeed a decomposition between a second-order approximation and a polyak momentum step.}

%\textcolor{green}{TODO: result on generic distirbution showing that optimal algorithm is just on the "real version", i.e., on the distribution of $A^TA$.}

%\clearpage
\section{General average-case optimal method for normal operators}

%Now we have the explicit formula of the error of a first-order method, we can minimize the integral in \eqref{eq:expect_dist}, whose explicit solution is given by the following theorem.

In this section we derive general average-case optimal first-order methods for normal operators. First, we need to assume the existence of a three-term recurrence for residual orthogonal polynomials (\autoref{ass:three_term_residual}). As mentioned in \autoref{sec:difficulties}, for general measures in the complex plane, the existence of a three-term recurrence of orthogonal polynomials is not ensured. In \autoref{prop:3termcond} in \autoref{sec:appendix_b} we give a sufficient condition for its existence, and in the next subsection we will show specific examples where the residual orthogonal polynomials satisfy the three-term recurrence. %However, even a recurrence for orthogonal polynomials might not yield a recurrence for \textit{residual} orthogonal polynomials, as some polynomials of the sequence could have zero independent term (see Lemma 2.4.1 of \cite{fischer1996polynomial}).

%\textcolor{red}{Carles: This still needs an introducing paragraph. } %\textcolor{green}{Carles: It is the general procedure that we apply for the circular distribution, we can discuss.}

\begin{assumption}[Simplifying assumption] \label{ass:three_term_residual}
The sequence of residual polynomials $\{\psi_t \}_{t \geq 0}$ orthogonal w.r.t. the measure $\mu$, defined on the complex plane, admits the three-term recurrence 
\begin{align}
    \begin{split} \label{eq:three_term_residual_eq}
        \psi_{-1} = 0, \quad \psi_{0} = 1, \quad \psi_t(\lambda) = (a_t + b_t \lambda) \psi_{t-1}(\lambda) + (1-a_t) \psi_{t-2}(\lambda).
    \end{split}
\end{align}
\end{assumption}

% Notice that under \autoref{ass:three_term_residual}, there also exists a three-term recurrence for the orthonormal sequence $(\phi_t)_{t \geq 0}$ of polynomials corresponding to $\mu$, as $\phi_t = \psi_t/\|\psi_t\|_{\mu}$.

Under \autoref{ass:three_term_residual}, \autoref{thm:opt_alg} shows that the optimal algorithm can also be written as an average of iterates following a simple three-terms recurrence.

\begin{framed}
\vspace{-2ex}
\begin{restatable}{thm}{optalg} \label{thm:opt_alg}
Under \autoref{ass:three_term_residual} and the assumptions of \autoref{thm:expectation}, the following algorithm is optimal in the average case, with $\yy_{-1}=\yy_0 = \xx_0$:
\begin{align}
    \yy_t & = a_t \yy_{t-1} + (1- a_t) \yy_{t-2} + b_t F(\yy_{t-1})\nonumber \\
    \xx_t &=  \frac{B_t }{B_t + \beta_t} \xx_{t-1} + \frac{\beta_t}{B_t+\beta_t} \yy_t\, , \quad \beta_t = \phi_t^2(0), \quad B_t = B_{t-1} + \beta_{t-1}, \quad B_0 = 0\,. \vspace{-3ex} \label{eq:opti_algorithm_disk}
\end{align}
where $(\phi_{k}(0))_{k \geq 0}$ can be computed using the three-term recurrence (upon normalization). Moreover, $\EE_{(\AA,\xx^{\star},\xx_0)} \dist(\xx_t,\mathcal{X}^\star)$ converges to zero at rate ${1}/{B_t}$.
\end{restatable}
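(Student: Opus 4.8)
The plan is to combine Theorem \ref{thm:assche} (which gives the optimal residual polynomial $P^\star_t$ as a normalized sum of orthonormal polynomials $\phi_k$ weighted by $\phi_k(0)^*$) with the three-term recurrence of Assumption \ref{ass:three_term_residual} (which governs the \emph{residual} orthogonal polynomials $\psi_t$), and then translate the polynomial identity into a recursion on iterates via Proposition \ref{prop:link_algo_polynomial}. First I would reconcile the two families of polynomials: the $\psi_t$ are residual ($\psi_t(0)=1$) and orthogonal (not normalized), while the $\phi_k$ are orthonormal (not residual). Since both are orthogonal sequences for $\mu$, they differ only by scalars: $\psi_t = \phi_t/\phi_t(0)$, so $\phi_k(\lambda)\phi_k(0)^* = |\phi_k(0)|^2 \psi_k(\lambda)$, and Theorem \ref{thm:assche} rewrites as
\[
  P^\star_t(\lambda) = \frac{\sum_{k=0}^t |\phi_k(0)|^2 \psi_k(\lambda)}{\sum_{k=0}^t |\phi_k(0)|^2} = \frac{\sum_{k=0}^t \beta_k \psi_k(\lambda)}{B_t + \beta_t}\,,
\]
with $\beta_k = \phi_k^2(0)$ and $B_t + \beta_t = \sum_{k=0}^t \beta_k$. (Here I should note $\phi_k(0)^2$ is real positive under the relevant assumptions, matching the theorem's notation $\phi_t^2(0)$.) Define the "auxiliary iterate" $\yy_t - \xx^\star \defas \psi_t(\AA)(\xx_0 - \xx^\star)$; by Assumption \ref{ass:three_term_residual} applied with $\lambda \mapsto \AA$ and using $F(\yy) = \AA(\yy - \xx^\star)$, the recurrence $\psi_t = (a_t + b_t\lambda)\psi_{t-1} + (1-a_t)\psi_{t-2}$ becomes exactly $\yy_t = a_t \yy_{t-1} + (1-a_t)\yy_{t-2} + b_t F(\yy_{t-1})$, which is the first line of \eqref{eq:opti_algorithm_disk}.

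Next I would derive the update for $\xx_t$. Setting $\xx_t - \xx^\star \defas P^\star_t(\AA)(\xx_0-\xx^\star) = \frac{1}{B_t+\beta_t}\sum_{k=0}^t \beta_k (\yy_k - \xx^\star)$, the partial-sum structure gives a telescoping relation: $(B_t + \beta_t)(\xx_t - \xx^\star) = (B_{t-1}+\beta_{t-1})(\xx_{t-1}-\xx^\star) + \beta_t(\yy_t - \xx^\star)$. Since $B_t = B_{t-1} + \beta_{t-1}$, the coefficient of $\xx_{t-1}-\xx^\star$ is $B_t$, and dividing through by $B_t + \beta_t$ (noting $B_t + \beta_t = (B_t) + \beta_t$ and that the $\xx^\star$ terms cancel because the weights $B_t$ and $\beta_t$ sum to $B_t + \beta_t$) yields $\xx_t = \frac{B_t}{B_t+\beta_t}\xx_{t-1} + \frac{\beta_t}{B_t+\beta_t}\yy_t$, which is the second line. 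The initialization $\yy_{-1}=\yy_0 = \xx_0$ follows from $\psi_{-1}=0,\psi_0=1$ (and consistency: $P^\star_0 = 1$ so $\xx_0$ is the true initialization), and $B_0 = 0$ is the empty-sum convention. Optimality is then immediate: $P^\star_t$ is a residual polynomial of degree $\le t$, so by Proposition \ref{prop:link_algo_polynomial} the iterates $\xx_t$ are realizable by a first-order method; by Theorem \ref{thm:expectation}, $\EE[\dist(\xx_t,\mathcal{X}^\star)] = R^2 \int_{\mathbb{C}\setminus\{0\}} |P^\star_t|^2 \dif\mu$; and by Theorem \ref{thm:assche} this integral is the minimum over all degree-$\le t$ residual polynomials, with value $1/\sum_{k=0}^t|\phi_k(0)|^2 = 1/(B_t+\beta_t)$. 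Since $\dist$ is a nonnegative quadratic form and \eqref{eq:optimal_algo_problem} restricts to first-order methods which (by Proposition \ref{prop:link_algo_polynomial}) all correspond to such residual polynomials, no first-order method beats $\xx_t$. The convergence rate $\asymp 1/B_t$ follows from $B_t + \beta_t \ge B_t$ (and $\beta_t$ being lower-order, or simply absorbing it).

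The main obstacle I anticipate is the bookkeeping around the normalization and reality of $\phi_k(0)$: Theorem \ref{thm:assche} is stated with $\phi_k(0)^*$ (complex conjugate) whereas \eqref{eq:opti_algorithm_disk} uses $\phi_t^2(0)$ without conjugation, so I need to argue that under Assumptions \ref{ass:assumption_x0_xstar}, \ref{assump:normal} and \ref{ass:three_term_residual} the relevant measure $\mu$ and hence the $\phi_k$ can be taken so that $\phi_k(0)$ is real (e.g. because $\mu$ is symmetric under conjugation when $\AA$ is real, forcing the orthonormal polynomials to have real coefficients up to a unit-modulus scalar, which can be chosen to make $\phi_k(0) > 0$). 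A secondary subtlety is verifying that the three-term recurrence for the \emph{residual} polynomials $\psi_t$ (Assumption \ref{ass:three_term_residual}) is genuinely the one induced by the standard three-term recurrence for the orthonormal $\phi_k$ after rescaling — i.e. that $\psi_t = \phi_t/\phi_t(0)$ is consistent, requiring $\phi_t(0)\neq 0$ for all $t$, which is where the absolute-continuity / nondegeneracy hypothesis enters. Everything else is the routine translation between polynomial identities and iterate recursions that is standard in this literature.
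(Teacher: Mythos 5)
Your proposal is correct and follows essentially the same path as the paper's own proof: both rely on Theorem \ref{thm:assche} to identify the optimal residual polynomial $P_t^\star$, both use the relation $\psi_t = \phi_t/\phi_t(0)$ together with Assumption \ref{ass:three_term_residual} to realize the auxiliary iterates $\yy_t$ by the given three-term recurrence, and both obtain the averaging update for $\xx_t$ from the partial-sum structure of $P_t^\star$. The paper packages this as an induction verifying that the stated algorithm reproduces $P_t^\star(\AA)$, whereas you derive the recursion directly from the polynomial identity via telescoping; these are the same argument seen from opposite ends. Your closing observation about the discrepancy between $\phi_k(0)^*$ in Theorem \ref{thm:assche} and $\phi_k^2(0)$ in the statement is a genuine loose end that the paper's proof also does not address explicitly, and your proposed fix (real orthonormal polynomials, or $\phi_k(0)$ real positive by choice of normalization, justified by conjugation symmetry of $\mu$) is the right repair.
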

\vspace{-2ex}
\end{framed}
\textbf{Remark.} \quad Notice that it is not immediate that \eqref{eq:opti_algorithm_disk} fulfills the definition of first-order algorithms stated in \eqref{eq:optimal_algo_problem}, as $\yy_t$ is clearly a first-order method but $\xx_t$ is an average of the iterates $\yy_t$. Using that $F$ is an affine function we see that $\xx_t$ indeed fulfills \eqref{eq:optimal_algo_problem}. 

\textbf{Remark.} \quad \autoref{ass:three_term_residual} is needed for the sequence $(\yy_t)_{t \geq 0}$ to be computable using a three-term recurrence. However, for some distribution, the associated sequence of orthogonal polynomials may admit another recurrence that may not satisfy \autoref{ass:three_term_residual}.

\subsection{Circular spectral distributions}
In random matrix theory, the circular law states that if $\AA$ is an $n \times n$ matrix with i.i.d. entries of mean $C$ and variance $R^2/n$, as $n \rightarrow \infty$ the spectral distribution of $\AA$ tends to the uniform distribution on $D_{C,R}$. In this subsection we apply \autoref{thm:opt_alg} to a class of spectral distributions specified by \autoref{ass:circular_measure}, which includes the uniform distribution on $D_{C,R}$. Even though the random matrices with i.i.d entries are not normal, in \autoref{sec:experiments} we see that the empirical results for such matrices are consistent with our theoretical results under the normality assumption.

\begin{assumption} \label{ass:circular_measure}
Assume that the spectral distribution $\mu_{\AA}$ is supported in the complex plane on the disk $D_{C,R}$ of center $C \in \mathbb{R}, C > 0$ and radius $R < C$. Moreover, assume that the spectral density is circularly symmetric, i.e. there exists a probability measure $\mu_R$ supported on $[0,R]$ such for all $f$ measurable and $r \in [0,R]$, $\dif \mu_{\AA}(C + r e^{i\theta}) = \frac{1}{2\pi} \dif \theta \dif \mu_R(r)$.
%\begin{align}
%    \int_{D_{C,R}} f(\lambda) \dif \mu(\lambda) = \int_0^R \frac{1}{2\pi} \int_{0}^{2\pi} f(C + r e^{i\theta}) \dif\theta \dif \mu_R(r)\,.
%\end{align}
\end{assumption}
% For example, the uniform distribution in $D_{C,R}$ corresponds to $\dif \mu_R = \frac{2r}{R^2} \dif r$.

\begin{restatable}{prop}{orthocircular}
    \label{prop:ortho_circular}
    If $\mu$ satisfies \autoref{ass:circular_measure}, the sequence of orthonormal polynomials is $(\phi_t)_{t\geq 0}$,
    \begin{align}
        \textstyle \phi_t(\lambda) = \dfrac{(\lambda-C)^t}{K_{t,R}}, \text{ where } K_{t,R} = \sqrt{\int_{0}^R r^{2t} \dif \mu_{R}(r)}\,.
    \end{align}
    % For the uniform measure, we have $\dif \mu_R = \frac{2r}{R^2} \dif r$ and $K_{t,R} = R^t/\sqrt{t+1}$.
\end{restatable}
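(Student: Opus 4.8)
The plan is to directly verify orthonormality of the proposed family $\phi_t(\lambda) = (\lambda-C)^t/K_{t,R}$ with respect to the inner product $\langle\cdot,\cdot\rangle_\mu$ defined in \eqref{eq:scalar_product}, exploiting the circular symmetry of $\mu_\AA$ from \autoref{ass:circular_measure}. The key observation is that under the change of variables $\lambda = C + r e^{i\theta}$, the measure factorizes as $\dif\mu_\AA(C + re^{i\theta}) = \frac{1}{2\pi}\dif\theta\,\dif\mu_R(r)$, so the angular integral decouples from the radial one.

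First I would compute, for arbitrary $j \le k$,
\begin{align}
    \langle (\lambda-C)^j, (\lambda-C)^k \rangle_\mu = \int_{\mathbb{C}} (\lambda-C)^j \big((\lambda-C)^k\big)^* \dif\mu(\lambda) = \int_0^R \int_0^{2\pi} r^j e^{ij\theta} \, r^k e^{-ik\theta} \, \frac{1}{2\pi}\dif\theta \, \dif\mu_R(r)\,.
\end{align}
The inner angular integral $\frac{1}{2\pi}\int_0^{2\pi} e^{i(j-k)\theta}\dif\theta$ equals $1$ if $j = k$ and $0$ otherwise, which immediately gives both orthogonality (for $j \ne k$) and the normalization $\langle(\lambda-C)^k,(\lambda-C)^k\rangle_\mu = \int_0^R r^{2k}\dif\mu_R(r) = K_{k,R}^2$. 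Dividing by $K_{j,R}K_{k,R}$ yields $\langle \phi_j, \phi_k\rangle_\mu = \delta_{jk}$, so the $\phi_t$ form an orthonormal sequence. One should also note that each $\phi_t$ has exact degree $t$ (leading coefficient $1/K_{t,R} \ne 0$ since $R > 0$ and $\mu_R$ is a genuine probability measure on $[0,R]$), so this family is the sequence of orthonormal polynomials, which is unique up to unimodular constants; the stated normalization fixes the leading coefficient to be real and positive.

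The only subtlety — and the main thing to check carefully rather than a real obstacle — is that the expectation defining $\mu = \EE_\AA[\mu_\AA]$ is taken termwise, so the factorization in \autoref{ass:circular_measure} must be interpreted as holding for $\mu$ itself (which it does, since the assumption is phrased for $\mu_\AA$ but the circular-symmetry structure is preserved under taking expectations, or one simply assumes $\mu$ has this form directly). Given that, the computation above is a one-line consequence of Fubini and the orthogonality of characters $e^{ij\theta}$ on the circle. There is nothing deeper here; the content of the proposition is really just the recognition that circular symmetry forces the monomials $(\lambda-C)^t$ to be mutually orthogonal.
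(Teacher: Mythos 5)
Your proof is correct and takes essentially the same route as the paper: switch to polar coordinates centered at $C$, use the radial/angular factorization of the measure from Assumption 4, and invoke orthogonality of the characters $e^{ij\theta}$ on the circle. The paper phrases it as a two-step argument (first an auxiliary proposition for the $C=0$ case, then a translation $z\mapsto z+C$), whereas you do the general-$C$ computation directly; this is a cosmetic difference only, and your remark that the assumption on $\mu_\AA$ should be read as applying to $\mu = \EE_\AA[\mu_\AA]$ is a fair and correct clarification.
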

\paragraph{Example. }
The uniform distribution in $D_{C,R}$ is to $\dif \mu_R = \frac{2r}{R^2} \dif r$, and $K_{t,R} = R^t/\sqrt{t+1}$.

From \autoref{prop:ortho_circular}, the sequence of residual polynomials is given by $\phi_t(\lambda)/\phi_t(0) = \left(1 - \frac{\lambda}{C}\right)^t$, which implies that \autoref{ass:three_term_residual} is fulfilled with $a_t = 1, b_t = - \frac{1}{C}$. Thus, by \autoref{thm:opt_alg} we have
\begin{framed}
\vspace{-2ex}
\begin{restatable}{thm}{circularthmone} \label{thm:circular}
Given an initialization $\xx_0 (\yy_0 = \xx_0)$, if \autoref{ass:circular_measure} is fulfilled with $R < C$ and the assumptions of \autoref{thm:expectation} hold, then the average-case optimal first-order method is
\begin{align}
\begin{split} \label{eq:opti_algorithm_disk2}
    \yy_t &= \textstyle \yy_{t-1} -\frac{1}{C} F(\yy_{t-1}), \quad \beta_t = C^{2t}/K_{t,R}^{2}, \quad B_t = B_{t-1} + \beta_{t-1},\\
    \xx_t &=  \frac{B_t}{B_t+\beta_t} \xx_{t-1} + \frac{\beta_t}{B_t+\beta_t} \yy_t.
\end{split}
\end{align}
Moreover, $\EE_{(\AA,\xx^{\star},\xx_0)} \dist(\xx_t,\mathcal{X}^\star)$ converges to zero at rate ${1}/{B_t}$.
\end{restatable}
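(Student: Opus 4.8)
The plan is to derive \autoref{thm:circular} as a direct specialization of the general characterization \autoref{thm:opt_alg}, using the explicit orthonormal polynomials supplied by \autoref{prop:ortho_circular}; the proof is essentially bookkeeping. By \autoref{prop:ortho_circular}, the orthonormal polynomials for $\mu$ are $\phi_t(\lambda)=(\lambda-C)^t/K_{t,R}$. Since $C\in\RR$ with $C>0$, we have $\phi_t(0)=(-C)^t/K_{t,R}\neq 0$, so the (unique) degree-$t$ residual orthogonal polynomial is
\[
    \psi_t(\lambda)=\frac{\phi_t(\lambda)}{\phi_t(0)}=\Bigl(1-\frac{\lambda}{C}\Bigr)^{t},
    \qquad
    \beta_t\defas\phi_t^2(0)=\frac{C^{2t}}{K_{t,R}^{2}}\,,
\]
where I use that a residual polynomial orthogonal to all lower-degree polynomials must be the scalar multiple of $\phi_t$ normalized to equal $1$ at the origin.

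Next I would verify the hypotheses and substitute. From the factorization $(1-\lambda/C)^t=(1-\lambda/C)\,(1-\lambda/C)^{t-1}$ one reads off that $\{\psi_t\}$ satisfies the recurrence \eqref{eq:three_term_residual_eq} with $a_t=1$ and $b_t=-1/C$ (the term $(1-a_t)\psi_{t-2}$ vanishes), and the initializations $\psi_{-1}=0$, $\psi_0=1$ hold, so \autoref{ass:three_term_residual} is met. The assumption $R<C$ with $C>0$ places $D_{C,R}$ strictly away from the origin, hence $\mu(\{0\})=0$ and the integral in \eqref{eq:expect_dist} runs over the full support of $\mu$; thus the assumptions of \autoref{thm:expectation} (and the applicability of \autoref{thm:assche} behind \autoref{thm:opt_alg}) are satisfied. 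Substituting $a_t=1$, $b_t=-1/C$, $\beta_t=C^{2t}/K_{t,R}^2$ into \eqref{eq:opti_algorithm_disk}, the $\yy$-recurrence collapses to $\yy_t=\yy_{t-1}-\tfrac1C F(\yy_{t-1})$ with $\yy_{-1}=\yy_0=\xx_0$, while the averaging update and the definitions $B_t=B_{t-1}+\beta_{t-1}$, $B_0=0$ are unchanged; this is exactly \eqref{eq:opti_algorithm_disk2}, and \autoref{thm:opt_alg} gives both average-case optimality and $\EE\dist(\xx_t,\mathcal{X}^\star)\to 0$ at rate $1/B_t$.

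Finally, one may add that this rate is genuinely fast: since $K_{t,R}^2=\int_0^R r^{2t}\dif\mu_R(r)\le R^{2t}$, we get $\beta_t\ge (C/R)^{2t}$, so $B_t\to\infty$ at least geometrically because $C>R$. I do not expect a serious obstacle anywhere: the substance lives in \autoref{prop:ortho_circular} and \autoref{thm:opt_alg}. The only points requiring a little care are confirming that $\psi_t=\phi_t/\phi_t(0)$ really is \emph{the} orthogonal residual polynomial (so that \autoref{ass:three_term_residual} concerns the correct sequence), and using $R<C$ to exclude an atom of $\mu$ at the origin.
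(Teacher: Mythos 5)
Your proof is correct and takes essentially the same route as the paper: plug the explicit orthonormal sequence from \autoref{prop:ortho_circular} into \autoref{thm:opt_alg}, check that $\psi_t=\phi_t/\phi_t(0)=(1-\lambda/C)^t$ satisfies \autoref{ass:three_term_residual} with $a_t=1$, $b_t=-1/C$, and read off $\beta_t=\phi_t(0)^2=C^{2t}/K_{t,R}^2$. The extra observations (that $R<C$ rules out an atom at the origin, and that $B_t$ grows at least geometrically) are sound and consistent with what the paper defers to \autoref{prop:convrates}.
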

\vspace{-2ex}
\end{framed}
%The class of spectral distributions considered in \autoref{ass:circular_measure} is relevant for two reasons:
%\begin{itemize}[leftmargin=*]
%\item First, in random matrix theory, the circular law states that if $\AA$ is an $n \times n$ matrix with i.i.d. entries of mean $C$ and variance $R^2/n$, as $n \rightarrow \infty$ the spectral distribution of $\AA$ tends to the uniform distribution on $D_{C,R}$. Even though the random matrices obtained through this procedure are not normal, in \autoref{sec:experiments} we see that the empirical results for such matrices are consistent with the theory on normal matrices. 
%\item 
We now compare \autoref{thm:circular} with worst-case methods studied in \cite{azizian2020accelerating}. They give a worst-case convergence lower bound of $(R/C)^{2t}$ on the quantity $\dist(\zz_t,\mathcal{X}^\star)$ for first-order methods $(\zz_t)_{t \geq 0}$ on matrices with eigenvalues in the disk $D_{C,R}$. By the classical analysis of first-order methods, this rate is achievable by gradient descent with stepsize $1/C$, i.e. the iterates $\yy_t$ defined in \eqref{eq:opti_algorithm_disk2}. However, by equation \eqref{eq:limiting_rates} in  \autoref{prop:convrates} we have that under slight additional assumptions (those of \autoref{thm:asymptotic2}), $\lim_{t \rightarrow \infty} \EE [\dist(\xx_t,\mathcal{X}^\star)]/\EE [\dist(\yy_t,\mathcal{X}^\star)] = 1 - \frac{R^2}{C^2}$ holds. That is, the average-case optimal algorithm outperforms gradient descent by a constant factor depending on the conditioning $R/C$, showcasing that average-case analysis is subtler than worst-case analysis.   

%\textcolor{red}{However, even if Azzizian et al. found near-optimal algorithms that can be always be decomposed as a extra gradient, then a momentum step, we show that this does not hold generally when the problem is no longer bilinear.}

%\textcolor{red}{Difference with worst case: here, there \textit{is} a momentum parameter, thus an improvement compared to worst-case. The parameter however converges to zero, which leads to the same asymptotic performance, as the speedup is polynomial and not exponential.}

%\textcolor{green}{\textbf{NEEDS TO BE DONE: transformation to elliptic laws?}}

\section{Asymptotic behavior}
The recurrence coefficients of the average-case optimal method typically converges to limiting values when $t\rightarrow \infty$, which gives an "average-case asymptotically optimal first-order method" with constant coefficients. For the case of symmetric operators with spectrum in $[\ell,L]$, \citet{scieur2020universal} show that under mild conditions, the asymptotically optimal algorithm is the Polyak momentum method with coefficients depending only on $\ell$ and $L$. For bilinear games, since the average-case optimal algorithm is the average-case optimal algorithm of an optimization algorithm, we can make use of their framework to obtain the asymptotic algorithm (see Theorem 3 of \citet{scieur2020universal}).

% \begin{framed}
% \vspace{-2ex}
\begin{restatable}{prop}{asymptotic1} \label{thm:asymptotic1}
Assume that the spectral density $\mu_{\MM \MM^{\top}}$ of $\MM \MM^\top$ is supported in $[\ell, L]$ for $0 < \ell < L$, and strictly positive in this interval. Then, the asymptotically optimal algorithm for bilinear games is the following version of Polyak momentum:
\begin{align}
\begin{split} \label{eq:asympt_bilinear}
    \gg_t & = F(\xx_{t} - F(\xx_t)) - F(\xx_t)\\ %\textstyle \frac{1}{\eta}(F(\xx_{t} - \eta F(\xx_t)-F(\xx_t)), \\
    \xx_{t+1} &= \textstyle \xx_t + \left( \frac{\sqrt{L} - \sqrt{\ell}}{\sqrt{L} + \sqrt{\ell}} \right)^2 (\xx_{t-1} - \xx_t) - \left( \frac{2}{\sqrt{L} + \sqrt{\ell}} \right)^2 \gg_t
\end{split}
\end{align}
\end{restatable}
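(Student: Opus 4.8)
The strategy is to reduce the asymptotic analysis of the bilinear game to the known asymptotic theory for quadratic minimization. By \autoref{thm:bilinear_thm1}, the average-case optimal method for the bilinear game is obtained by running the average-case optimal method for the minimization of the quadratic $\frac{1}{2}\|F(\xx)\|^2 = \frac{1}{2}(\xx-\xx^\star)^\top \AA^\top \AA (\xx-\xx^\star)$, whose Hessian is $\AA^\top\AA = \mathrm{blockdiag}(\MM\MM^\top,\,\MM^\top\MM)$. Hence the optimal coefficients $\{h_t,m_t\}$ are those of the average-case optimal method associated to the expected spectral distribution of $\AA^\top\AA$. Since the nonzero eigenvalues of $\MM\MM^\top$ and $\MM^\top\MM$ coincide, the relevant spectral measure is (up to a point mass at $0$ that is killed by the residual polynomial, cf. \autoref{thm:expectation}) supported in $[\ell,L]$ and strictly positive there by hypothesis.

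First I would invoke the asymptotic result for quadratic minimization, namely Theorem~3 of \citet{scieur2020universal}: when the expected spectral density is supported in $[\ell,L]$ with $0<\ell<L$ and is strictly positive on that interval, the recurrence coefficients of the average-case optimal method converge, and the limiting constant-coefficient method is Polyak Heavy Ball with momentum $\beta = \big(\tfrac{\sqrt L-\sqrt\ell}{\sqrt L+\sqrt\ell}\big)^2$ and step size $\alpha = \big(\tfrac{2}{\sqrt L+\sqrt\ell}\big)^2$. Applied to the Hessian $\AA^\top\AA$ with this spectrum, this gives exactly the update $\xx_{t+1} = \xx_t + \beta(\xx_{t-1}-\xx_t) - \alpha\, \gg_t$, where $\gg_t = \nabla\big(\tfrac12\|F(\xx_t)\|^2\big)$. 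Second, I would identify $\gg_t$ with the extragradient-type expression $F(\xx_t - F(\xx_t)) - F(\xx_t)$ using \eqref{eq:relation_grad_extragrad}, which holds since $F$ is affine; this produces precisely \eqref{eq:asympt_bilinear}. Finally I should check that the limiting recurrence is consistent with the general $\{h_t,m_t\}\to\{\alpha,\beta\}$ passage in \eqref{eq:opti_scheme_bilinear}, i.e. that taking the limit commutes with the even/odd bookkeeping in the proof sketch of \autoref{thm:bilinear_thm1}; this is where one uses that the odd iterates are only internal computations of $\gg_t$, so the limiting two-step recurrence on the genuine iterates inherits the constant Polyak coefficients directly.

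The main obstacle is verifying the hypotheses of \citet[Theorem~3]{scieur2020universal} for the measure at hand: one must confirm that the expected spectral distribution of $\AA^\top\AA$, after removing the atom at $0$, satisfies their regularity conditions (strict positivity on $[\ell,L]$ and the relevant integrability/edge conditions) — this is why the statement assumes $\mu_{\MM\MM^\top}$ is strictly positive on $[\ell,L]$ — and that the two blocks $\MM\MM^\top$ and $\MM^\top\MM$ contribute the same limiting edges $\ell,L$. Given \autoref{thm:bilinear_thm1} and the cited asymptotic theorem, the remaining work is the bookkeeping translation between the minimization iterates and the bilinear-game iterates via \eqref{eq:relation_grad_extragrad}, which is routine.
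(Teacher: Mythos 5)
Your proposal is correct and takes essentially the same approach as the paper: the paper gives no separate appendix proof for this proposition but simply argues, in the text immediately preceding it, that \autoref{thm:bilinear_thm1} reduces the bilinear game to the average-case optimal method for minimizing $\tfrac12\|F(\xx)\|^2$, after which Theorem~3 of \citet{scieur2020universal} (applied to $\AA^\top\AA$ with nonzero spectrum in $[\ell,L]$) yields the Polyak coefficients, and \eqref{eq:relation_grad_extragrad} identifies $\gg_t$ with the extragradient-type expression. Your fleshing-out of the spectral bookkeeping (the zero atom, the shared nonzero spectrum of $\MM\MM^\top$ and $\MM^\top\MM$) and of the even/odd iterate correspondence is a faithful elaboration of exactly that argument.
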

% \vspace{-3ex}
%\end{framed}
Notice that the algorithm in \eqref{eq:asympt_bilinear} is the worst-case optimal algorithm from Proposition 4 of \cite{azizian2020accelerating}.
For the case of circularly symmetric spectral densities with support on disks, we can also compute the asymptotically optimal algorithm. 
% \begin{framed}
% \vspace{-2ex}
\begin{restatable}{prop}{asymptoticcirc} \label{thm:asymptotic2}
Suppose that the assumptions of \autoref{thm:circular} hold with $\mu_R \in \mathcal{P}([0,R])$ fulfilling $\mu_R([r,R]) = \Omega((R-r)^\kappa)$ for $r$ in $[r_0,R]$ for some $r_0 \in [0,R)$ and for some $\kappa \in \mathbb{Z}$. %, where $\Theta(f)$ denotes $\mathcal{O}(f) \cap \Omega(f)$. %being absolutely continuous with respect to the Lebesgue measure in a neighborhood of $R$. Suppose also that in this neighborhood, the density $\rho_R$ of $\mu_R$ fulfills $\rho_R $. 
Then, the average-case asymptotically optimal algorithm is, with $\yy_0 = \xx_0$:
\begin{align}
\begin{split} \label{eq:opt_circ_asymp}
    \yy_t &= \textstyle \yy_{t-1} -\frac{1}{C} F(\yy_{t-1}), \\
    \xx_t &= \textstyle \left(\frac{R}{C}\right)^2 \xx_{t-1} + \left(1 - \left(\frac{R}{C}\right)^2 \right) \yy_t.
\end{split}
\end{align}
Moreover, the convergence rate for this algorithm is asymptotically the same one as for the optimal algorithm in \autoref{thm:circular}. Namely, $\lim_{t \rightarrow \infty} \EE[\dist(\xx_t,\mathcal{X}^\star)] B_t = 1$.
\end{restatable}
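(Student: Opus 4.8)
The plan is to start from the exact average-case optimal scheme \eqref{eq:opti_algorithm_disk2} given by \autoref{thm:circular} and show that the only data-dependent coefficient, the averaging weight $\beta_t/(B_t+\beta_t)$, converges to $1-(R/C)^2$ as $t\to\infty$. Since the $\yy_t$-update in \eqref{eq:opti_algorithm_disk2} already has constant coefficients (gradient descent with stepsize $1/C$), it suffices to analyze the scalar sequences $\beta_t = C^{2t}/K_{t,R}^2$ and $B_t=\sum_{k=0}^{t-1}\beta_k$. By \autoref{prop:ortho_circular}, $K_{t,R}^2 = \int_0^R r^{2t}\dif\mu_R(r)$, so $\beta_t = C^{2t}/\int_0^R r^{2t}\dif\mu_R(r)$. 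The key observation is that $\beta_t/\beta_{t-1} = C^2 \cdot \big(\int_0^R r^{2t-2}\dif\mu_R(r)\big)/\big(\int_0^R r^{2t}\dif\mu_R(r)\big)$, and the ratio of consecutive moments of $\mu_R$ tends to $1/R^2$ — this is precisely the role of the tail condition $\mu_R([r,R]) = \Omega((R-r)^\kappa)$, which guarantees that mass accumulates near the edge $R$ fast enough that the moment sequence $\int_0^R r^{2t}\dif\mu_R(r)$ behaves like $R^{2t}$ up to a polynomially-varying factor (a Laplace/Watson-type asymptotic). Hence $\beta_t/\beta_{t-1}\to (C/R)^2 =: \gamma > 1$.

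Once $\beta_t/\beta_{t-1}\to\gamma>1$ is established, the rest is elementary. First I would show $B_t/\beta_t \to 1/(\gamma-1)$: writing $B_t/\beta_t = \sum_{j=1}^{t}\prod_{i=0}^{j-1}(\beta_{t-i}/\beta_{t-i-1})^{-1}$ and using that the ratios converge to $\gamma$, a dominated-convergence / Cesàro argument gives $B_t/\beta_t \to \sum_{j\ge 1}\gamma^{-j} = 1/(\gamma-1)$. Therefore
\[
  \frac{\beta_t}{B_t+\beta_t} = \frac{1}{B_t/\beta_t + 1} \;\longrightarrow\; \frac{1}{1/(\gamma-1)+1} = \frac{\gamma-1}{\gamma} = 1-\frac{1}{\gamma} = 1-\frac{R^2}{C^2},
\]
and correspondingly $B_t/(B_t+\beta_t)\to (R/C)^2$. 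Substituting these limiting weights into the $\xx_t$-update of \eqref{eq:opti_algorithm_disk2} yields exactly \eqref{eq:opt_circ_asymp}, which proves the first claim. For the rate claim $\lim_t \EE[\dist(\xx_t,\mathcal{X}^\star)]B_t = 1$: by \autoref{thm:circular} the optimal scheme already satisfies $\EE[\dist(\xx_t^{\mathrm{opt}},\mathcal{X}^\star)] = R_0^2/B_t$ (via \autoref{thm:assche}, whose optimal value is $1/\sum_{k\le t}|\phi_k(0)|^2 = 1/B_t$ after the appropriate normalization of the initialization), so I would argue that replacing the exact coefficients by their limits perturbs the residual polynomial negligibly — concretely, the asymptotic scheme's residual polynomial differs from the optimal one by lower-order terms in the $\langle\cdot,\cdot\rangle_\mu$-norm, so the ratio of expected errors tends to $1$, using \autoref{thm:expectation} to pass between polynomials and errors.

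The main obstacle is the moment-ratio asymptotic $\int_0^R r^{2t-2}\dif\mu_R / \int_0^R r^{2t}\dif\mu_R \to 1/R^2$ under the stated tail hypothesis. This is where the integrality of $\kappa$ and the $\Omega$-lower bound are used: they ensure the "edge mass" $\mu_R([R-\epsilon,R])$ does not decay faster than a fixed polynomial in $\epsilon$, ruling out pathological measures (e.g. ones with an essential singularity of vanishing order at $R$) for which the moment ratio could fail to converge or converge to the wrong value. I expect to handle this by splitting the moment integral at $r = R-\delta_t$ for a slowly shrinking $\delta_t$ (say $\delta_t \asymp (\log t)/t$), bounding the contribution from $[0,R-\delta_t]$ by $(R-\delta_t)^{2t}\le R^{2t}e^{-2\delta_t t/R}$ which is negligible relative to the edge contribution $\gtrsim R^{2t}\delta_t^{\kappa}$ by the tail lower bound, and then showing the edge contribution's $t$- versus $(t-1)$-dependence contributes only a factor $1/R^2$ in the ratio. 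The remaining steps — the Cesàro limit for $B_t/\beta_t$ and the perturbation argument for the rate — are routine.
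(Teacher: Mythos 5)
Your route for the coefficient limit is a genuine alternative to the paper's. The paper (\autoref{prop:limitprop}) bounds the quantity $\beta_t/(B_t+\beta_t)=Q_{t,R}/\sum_{k\le t}(R/C)^{2(t-k)}Q_{k,R}$ directly, sandwiching it between $1-R^2/C^2$ (from monotonicity of $Q_{t,R}$) and an upper bound obtained by truncating the sum to a window $[t-c_\epsilon,t]$ and controlling $\int_k^t\frac{d}{ds}Q_{s,R}\,\dif s$. You instead prove the consecutive-ratio limit $\beta_t/\beta_{t-1}\to C^2/R^2$ and then pass through a Ces\`aro argument for $B_t/\beta_t$; this is more conceptual and exposes the slow variation of $Q_{t,R}$ as the key property. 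Both approaches lean on the same Laplace-type use of the tail hypothesis, so the overall flavor is the same.

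Two issues, however. First, a local error: your claimed lower bound ``edge contribution $\gtrsim R^{2t}\delta_t^\kappa$'' over $[R-\delta_t,R]$ with $\delta_t\asymp(\log t)/t$ is not attainable, because on that band one only has $r^{2t}\ge (R-\delta_t)^{2t}\approx R^{2t}t^{-2M/R}$, which reintroduces exactly the factor you need to beat. The correct comparison is to lower-bound the full moment $\int_0^R r^{2t}\dif\mu_R$ using a \emph{thinner} band $[R-\eta,R]$ with $\eta\asymp 1/t$ (so that $(R-\eta)^{2t}$ stays $\asymp R^{2t}$), giving $\int_0^R r^{2t}\dif\mu_R\gtrsim R^{2t}t^{-\kappa}$, and then choose $M>\kappa R/2$ in $\delta_t=M(\log t)/t$ so that the outer part $\le R^{2t}t^{-2M/R}$ is negligible. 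With that fix the ratio argument on $[R-\delta_t,R]$ (squeezing $1/r^2$ between $1/R^2$ and $1/(R-\delta_t)^2$) goes through.

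Second, and more substantively, your treatment of the rate claim $\lim_t\EE[\dist(\xx_t,\mathcal{X}^\star)]B_t=1$ is not a proof. Asserting that the asymptotic scheme's residual polynomial $\tilde P_t$ ``differs from the optimal one by lower-order terms in the $\langle\cdot,\cdot\rangle_\mu$-norm'' is precisely the statement that needs to be proved; it does not follow from pointwise convergence of the recursion coefficients, since those errors accumulate over $t$ iterations. Expanding in the orthonormal basis, the optimal coefficients are $(-1)^k\sqrt{\beta_k}/B_{t+1}$ while those of $\tilde P_t$ are $(-1)^k(1-(R/C)^2)(R/C)^{2(t-k)}/\sqrt{\beta_k}$; showing that $\|\tilde P_t\|_\mu^2\cdot B_{t+1}\to 1$ (or equivalently that the $\ell^2$-difference of the coefficient sequences is $o(1/\sqrt{B_{t+1}})$) requires exactly the kind of uniform control over the window $k\in[t-c_\epsilon,t]$, together with geometric smallness outside it, that the paper carries out in \autoref{prop:convrates} (equations \eqref{eq:rate_asymp} and \eqref{eq:limiting_rates}). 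You should either reproduce that computation or supply a comparable quantitative argument; labelling it ``routine'' leaves the second half of the proposition unproven.
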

% \vspace{-2ex}
% \end{framed}
The condition on $\mu_R$ simply rules out cases in which the spectral density has exponentially small mass around 1. It is remarkable that in algorithm \eqref{eq:opt_circ_asymp} the averaging coefficients can be expressed so simply in terms of the quantity $R/C$. Notice also that while the convergence rate of the algorithm is slower than the convergence rate for the optimal algorithm by definition, both rates match in the limit, meaning that the asymptotically optimal algorithm also outperforms gradient descent by a constant factor $1-\frac{R^2}{C^2}$ in the limit $t \rightarrow \infty$.
%Notice that in this case, the average-case asymptotically optimal algorithm does not coincide with the worst-case optimal proposed by \cite{azizian2020accelerating}, which corresponds to the iterates $\yy_t$ of \eqref{eq:opt_circ_asymp}. 

%\textcolor{red}{Show here that the asymptotic behavior of the algorithm is actually \textit{independently} of the distribution, under mild assumption. This means that with only the knowledge of the shape, one can design an asymptotically optimal algorithm.}
%\textcolor{green}{\textbf{NEEDS TO BE DONE!}}

\section{Experiments} \label{sec:experiments}

We compare some of the proposed methods on settings with varying degrees of mismatch with our assumptions.

\paragraph{Bilinear Games.} We consider min-max bilinear problems of the form \eqref{eq:bilinear_def_A}, where the entries of $\MM$ are generated i.i.d. from a standard Gaussian distribution. We vary the ratio $r=d/n$ parameter for $d=1000$ and compare the average-case optimal method of Theorems~\ref{thm:bilinear_thm1} and \ref{thm:asymptotic1}, the asymptotic worst-case optimal method of \citep{azizian2020accelerating} and extragradient~\citep{korpelevich1976extragradient}. In all cases, we use the convergence-rate optimal step-size assuming knowledge of the edges of the spectral distribution.

The spectral density for these problems is displayed in the first row of Figure~\ref{fig:experiments} and the benchmark results on the second row. Average-case optimal methods always outperform other methods, and the largest gain is in the ill-conditioned regime $(r \approx 1)$.

\paragraph{Circular Distribution.}
For our second experiment we choose $\AA$ as a matrix with iid Gaussian random entries, therefore the support of the distribution of its eigenvalue is a disk. Note that $\AA$ does not satisfy the normality assumption of Assumption~\ref{assump:normal}. Figure~\ref{fig:experiments} (third row) compares the average-case optimal methods from Theorems \ref{thm:circular} and \ref{thm:asymptotic2} on two datasets with different levels of conditioning. Note that the methods converge despite the violation of Assumption~\ref{assump:normal}, suggesting a broader applicability than the one proven in this paper. We leave this investigation for future work.

\begin{figure}
    \centering
    \includegraphics[width=\linewidth]{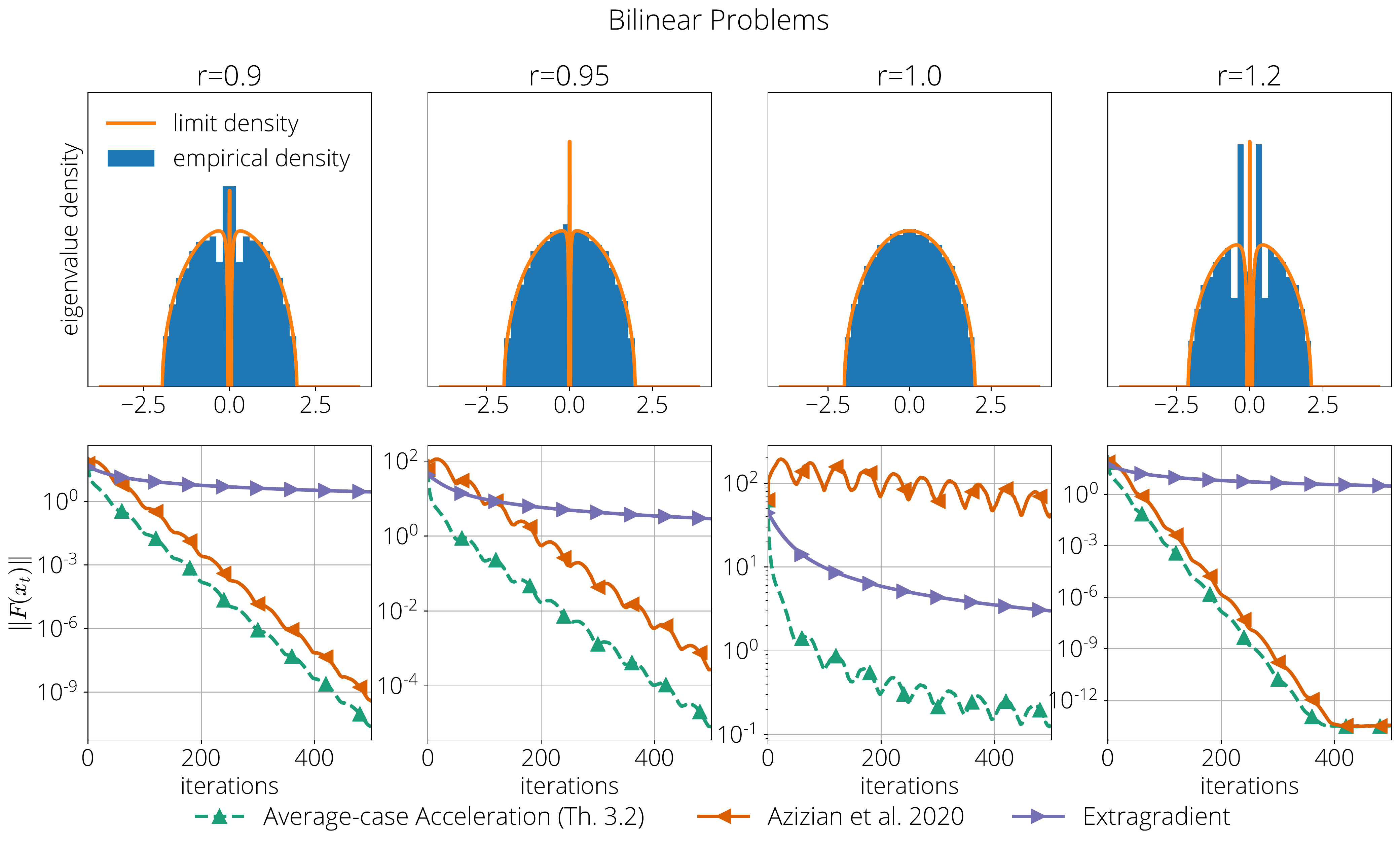}
    \includegraphics[width=0.97\linewidth]{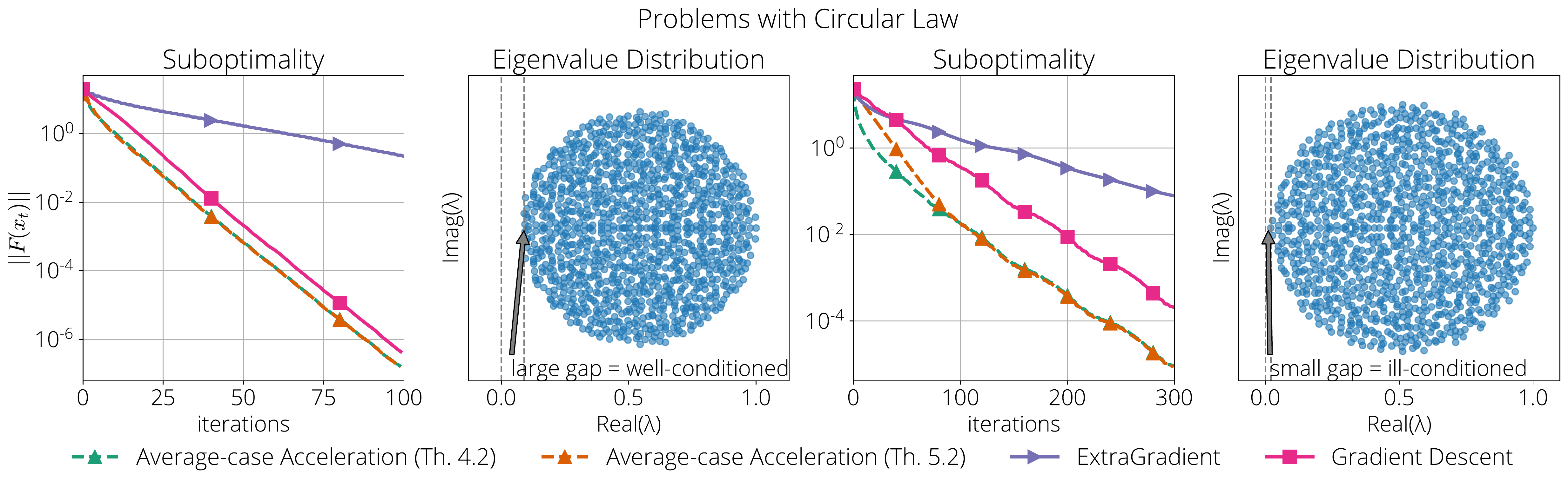}
    \caption{{\bfseries Benchmarks and spectral density for different games.} {\it Top row}: spectral density associated with bilinear games for varying decrees of the ratio parameter $r=n/d$. {\it Second row}: Benchmarks. Average-case optimal methods always outperform other methods, and the largest gain is in the ill-conditioned regime $(r \approx 1)$.
    {\it Third row}. Benchmarks (columns 1 and 3) and eigenvalue distribution of a design matrix generated with iid entries for two different degrees of conditioning. Depite the normality assumption not being satisfied, we still observe an improvement of average-case optimal methods vs worst-case optimal ones.}
    \label{fig:experiments}
\end{figure}

\section{Discussion and Future Research Directions}

In this paper, we presented a general framework for the design of optimal algorithms in the average-case for affine operators $F$, whose underlying matrix is possibly non-symmetric. However, our approach presents some limitations, the major one being the restriction to normal matrices. Fortunately, given our numerical experiments, it seems this assumption can be relaxed. As extensions, it would be interesting to analyze the nonlinear-case, as well as stochastic algorithms. Some recent works, such as \citep{loizou2020stochastic}, give some results in this direction in the worst-case setting.

\clearpage

\bibliography{biblio}

\clearpage

\appendix

\section{Proof of \autoref{thm:expectation}}

\subsection{Preliminaries}

Before proving \autoref{thm:expectation}, we quickly analyze the distance function \eqref{eq:def_distance}, recalled below,
\[
    \text{dist}(\xx,\, \mathcal{X}^\star) \defas \min_{\vv\in \mathcal{X}^\star} \|\xx-\vv\|^2.
\]
The definition of the distance function is not practical for the theoretical analysis. Fortunately, it is possible to find a simple expression that uses the orthogonal projection matrix $\Pi$ to the kernel $\text{Ker}(\AA)$. Since $\Pi$ is an orthogonal projection matrix to the kernel of a linear transformation, it satisfies
\begin{equation}
    \Pi = \Pi^T, \quad \Pi^2=\Pi, \quad \text{and}\;\; \AA\Pi  = 0. \label{eq:prop_projection}
\end{equation}
The normality assumption on $\AA$ implies also that
\begin{equation}
    \Pi \AA = 0. 
    \label{eq:prop_projection2}
\end{equation}
Indeed, the spectral decomposition of $\AA$ is
\[
    \AA = [\UU_1|\UU_2] 
    \begin{bmatrix}
    {\boldsymbol\Lambda} & 0 \\
    0 & 0
    \end{bmatrix}
    [\UU_1|\UU_2]^*,
\]
and then $\Pi = \UU_2 \UU_2^*$. The next proposition uses $\Pi$ to derive the explicit solution of the \eqref{eq:def_distance}.

\begin{restatable}{prop}{projection_formula}\label{pro:projection_formula}
We have that 
\begin{align*}
    \dist(\yy,\, \mathcal{X}^\star) & = \|(\II-\Pi)(\yy-\xx^\star)\|^2 \quad \forall \xx^{\star} \in \mathcal{X}^{\star}.
\end{align*}
\end{restatable}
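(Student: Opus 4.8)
The plan is to reduce the minimization over the affine subspace $\mathcal{X}^\star$ to an orthogonal projection problem. First I would fix an arbitrary $\xx^\star \in \mathcal{X}^\star$ and observe that, since $\mathcal{X}^\star = \{\xx : \AA(\xx-\xx^\star) = \vzero\}$ and $\Ker(\AA)$ is a linear subspace, we may write $\mathcal{X}^\star = \xx^\star + \Ker(\AA)$; that is, every $\vv \in \mathcal{X}^\star$ is of the form $\vv = \xx^\star + \uu$ with $\uu \in \Ker(\AA)$, and conversely. Substituting this parametrization into the definition of the distance and writing $\ww \defas \yy - \xx^\star$ gives
\[
    \dist(\yy,\,\mathcal{X}^\star) = \min_{\uu \in \Ker(\AA)} \|\ww - \uu\|^2 .
\]

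Next I would invoke the standard Hilbert-space fact that the unique minimizer of $\|\ww - \uu\|^2$ over a closed subspace is the orthogonal projection of $\ww$ onto that subspace. Since $\Pi$ is by definition the orthogonal projection onto $\Ker(\AA)$ (and satisfies $\Pi = \Pi^\top$, $\Pi^2 = \Pi$ as recalled in \eqref{eq:prop_projection}), the minimizer is $\uu^\star = \Pi\ww$, and the optimal value is
\[
    \|\ww - \Pi\ww\|^2 = \|(\II-\Pi)\ww\|^2 = \|(\II-\Pi)(\yy - \xx^\star)\|^2 ,
\]
which is the claimed expression. For completeness I would verify the optimality of $\Pi\ww$ directly via the Pythagorean identity: for any $\uu \in \Ker(\AA)$, writing $\ww - \uu = (\II-\Pi)\ww + (\Pi\ww - \uu)$ with $(\II-\Pi)\ww \perp \Ker(\AA) \ni \Pi\ww - \uu$ yields $\|\ww-\uu\|^2 = \|(\II-\Pi)\ww\|^2 + \|\Pi\ww - \uu\|^2 \ge \|(\II-\Pi)\ww\|^2$, with equality iff $\uu = \Pi\ww$.

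Finally I would check that the right-hand side does not depend on the choice of $\xx^\star \in \mathcal{X}^\star$: if $\xx^\star_1, \xx^\star_2 \in \mathcal{X}^\star$ then $\xx^\star_1 - \xx^\star_2 \in \Ker(\AA)$, so $(\II-\Pi)(\xx^\star_1-\xx^\star_2) = \vzero$ and hence $(\II-\Pi)(\yy - \xx^\star_1) = (\II-\Pi)(\yy - \xx^\star_2)$. I do not anticipate a genuine obstacle here; the only point requiring care is making sure the projection characterization of the minimizer is applied to $\Ker(\AA)$ as a linear subspace (after absorbing the shift by $\xx^\star$), and the properties of $\Pi$ used are exactly those already listed in \eqref{eq:prop_projection}. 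The normality-specific identity \eqref{eq:prop_projection2} is not needed for this particular statement.
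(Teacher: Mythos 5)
Your proof is correct and follows essentially the same route as the paper: parametrize $\mathcal{X}^\star = \xx^\star + \Ker(\AA)$, absorb the shift, and reduce to projection onto the kernel. You are slightly more careful than the paper in that you explicitly verify the minimizer via the Pythagorean identity (the paper merely asserts a minimizing $\ww$, and appears to have a sign slip there), and you correctly note that only the projection identities \eqref{eq:prop_projection} are needed here, not the normality-specific identity \eqref{eq:prop_projection2}.
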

\begin{proof}
    We first parametrize the set of solution $\mathcal{X}^\star$. By definition we have
    \[
        \mathcal{X}^\star = \{ \xx : \AA(\xx-\xx^{\star}) = 0 \}.
    \]
    Which can be written in terms of the kernel of $\AA$ as
    \[
        \mathcal{X}^\star = \{ \xx^\star + \Pi \ww : \ww \in \R^d \}.
    \]
    From this, we can rewrite the distance function \eqref{eq:def_distance} as 
    \[
        \text{dist}(\yy,\, \mathcal{X}^\star) = \min_{\ww \in \RR^d} \|\yy-(\xx^\star+\Pi\ww)\|^2.
    \]
    The minimum can be attained at different points, but in particular at $\ww = -(\yy-\xx^\star)$, which proves the statement.
\end{proof}

We now simplifies further the result of the previous proposition in the case where $\xx_{t}$ is generated by a first order method.
\begin{restatable}{prop}{projection_formula_first_order}\label{prop:projection_formula_first_order}
For every iterate $\xx_t$ of a first-order methods, i.e., $\xx_t$ satisfies
\[
    \xx_t-\xx^{\star} = P_t(\AA)(\xx_0-\xx^{\star}),\quad \deg(P_t) \leq t, \quad P(0) = \II,
\]
we have that 
\begin{align*}
    \dist(\xx_t,\, \mathcal{X}^\star) & = \|\xx_t-\xx^\star\|^2 - \|\Pi(\xx_0-\xx^\star)\|^2.
\end{align*}
\end{restatable}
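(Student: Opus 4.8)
The plan is to start from the formula in \autoref{pro:projection_formula}, namely $\dist(\xx_t,\mathcal{X}^\star) = \|(\II-\Pi)(\xx_t-\xx^\star)\|^2$, and expand the squared norm using the Pythagorean decomposition induced by the orthogonal projection $\Pi$. Since $\Pi$ is an orthogonal projection, for any vector $\vv$ we have the orthogonal split $\vv = \Pi\vv + (\II-\Pi)\vv$ with $\langle \Pi\vv, (\II-\Pi)\vv\rangle = 0$, hence $\|(\II-\Pi)\vv\|^2 = \|\vv\|^2 - \|\Pi\vv\|^2$. Applying this with $\vv = \xx_t - \xx^\star$ gives
\begin{align*}
    \dist(\xx_t,\mathcal{X}^\star) = \|\xx_t-\xx^\star\|^2 - \|\Pi(\xx_t-\xx^\star)\|^2\,.
\end{align*}
So it only remains to show $\Pi(\xx_t-\xx^\star) = \Pi(\xx_0-\xx^\star)$, i.e. that the projection onto $\Ker(\AA)$ of the error is invariant along any first-order trajectory.

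For this, I would use the polynomial representation $\xx_t-\xx^\star = P_t(\AA)(\xx_0-\xx^\star)$ with $P_t(0)=\II$. Write $P_t(\lambda) = 1 + \lambda Q_t(\lambda)$ for some polynomial $Q_t$ (this is exactly the statement that $P_t(0)=1$). Then $P_t(\AA) = \II + \AA Q_t(\AA)$, so
\begin{align*}
    \Pi(\xx_t-\xx^\star) = \Pi(\xx_0-\xx^\star) + \Pi \AA Q_t(\AA)(\xx_0-\xx^\star)\,.
\end{align*}
By \eqref{eq:prop_projection2}, which is the consequence of normality that $\Pi\AA = 0$, the second term vanishes (note $\Pi$ commutes with $Q_t(\AA)$ is not even needed — one just uses $\Pi\AA = 0$ directly on the left), giving $\Pi(\xx_t-\xx^\star) = \Pi(\xx_0-\xx^\star)$. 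Substituting back yields the claimed identity, and in particular the right-hand side no longer depends on the choice of $\xx^\star \in \mathcal{X}^\star$.

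The only subtlety — and the one place care is needed — is the use of normality: without it one has $\AA\Pi = 0$ but not necessarily $\Pi\AA = 0$, so the cancellation above would fail. This is why the statement is placed after the discussion establishing \eqref{eq:prop_projection2} from the spectral decomposition $\AA = \UU{\boldsymbol\Lambda}\UU^*$ with $\Pi = \UU_2\UU_2^*$. Everything else is a one-line Pythagorean computation, so I do not anticipate any real obstacle; the proof is essentially a bookkeeping exercise once $\Pi\AA = 0$ is in hand.
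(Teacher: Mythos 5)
Your proof is correct and follows essentially the same route as the paper: start from \autoref{pro:projection_formula}, reduce $\|(\II-\Pi)(\xx_t-\xx^\star)\|^2$ to $\|\xx_t-\xx^\star\|^2-\|\Pi(\xx_t-\xx^\star)\|^2$ via the orthogonal decomposition, then factor $P_t(\AA)=\II+\AA Q_{t-1}(\AA)$ and use $\Pi\AA=0$ (the normality-dependent identity \eqref{eq:prop_projection2}) to replace $\Pi(\xx_t-\xx^\star)$ by $\Pi(\xx_0-\xx^\star)$. Your remark that $\AA\Pi=0$ alone would not suffice, and that normality is what supplies $\Pi\AA=0$, correctly pinpoints the one non-trivial ingredient.
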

\begin{proof}
We start with the result of \autoref{pro:projection_formula},
    \[
        \text{dist}(\xx_t,\, \mathcal{X}^\star) = \|(\II-\Pi)(\xx_t-\xx^\star)\|^2.
    \]
    The norm can be split into
    \begin{align*}
        \|(\II-\Pi)(\xx_t-\xx^\star)\|^2 & = \|\xx_t-\xx^\star\|^2 + \|\underbrace{\Pi^2}_{=\Pi \text{ by \eqref{eq:prop_projection}}}(\xx_t-\xx^\star)\|^2 - 2 \|\Pi (\xx_t-\xx^\star)\|^2\\
        & = \|\xx_t-\xx^\star\|^2 - \|\Pi(\xx_t-\xx^\star)\|^2.
    \end{align*}
    Since $\xx_t$ is generated by a first order method, we have
    \[
        \xx_t -\xx^\star = P_t(\AA)(\xx_0-\xx^\star),\quad P_t(0) = 1.
    \]
    Since $P(0)=1$, the polynomial can be factorized as $P(\AA) = \II + \AA\QQ_{t-1}(\AA)$,
    $\QQ_{t-1}$ being a polynomial of degree $t-1$. Therefore, $\left\|\Pi(\xx_t-\xx^\star)\right\|^2$ reads
    \begin{align*}
        \left\|\Pi(\xx_t-\xx^\star)\right\|^2 &= \left\|\Pi\left(\II + \AA\QQ_{t-1}(\AA)\right)(\xx_0-\xx^{\star})\right\|^2\\
        & = \|\Pi(\xx_0-\xx^{\star}) + \underbrace{\Pi\AA}_{=0 \text{ by \eqref{eq:prop_projection2}}}\QQ_{t-1}(\AA)(\xx_0-\xx^{\star})\|^2\\
        & = \left\|\Pi(\xx_0-\xx^{\star})\right\|^2,
    \end{align*}
    which prove the statement.
\end{proof}

\subsection{Proof of the theorem}

We are now ready to prove the main result.
\expectation*
\begin{proof}
    We start with the result of \autoref{prop:projection_formula_first_order},
    \[
        \text{dist}(\xx_t,\, \mathcal{X}^\star) = \|\xx_t-\xx^\star\|^2 - \|\Pi(\xx_0-\xx^\star)\|^2.
    \]
    We now write the expectation of the distance function,
    \begin{align*}
        \E[\text{dist}(\xx_t,\, \mathcal{X}^\star)] & = \E\left[ \|\xx_t-\xx^{\star}\|^2-\left\|\Pi(\xx_0-\xx^{\star})\right\|^2 \right] \\
        & = \E\left[ \|P_t(\AA)(\xx_0-\xx^\star)\|^2-\left\|\Pi(\xx_0-\xx^{\star})\right\|^2 \right] \\
        & = \E\left[ \tr P_t(\AA)P_t(\AA)^T\left(\xx_0-\xx^{\star}\right)\left(\xx_0-\xx^{\star}\right)^T-\tr\Pi^2(\xx_0-\xx^{\star})(\xx_0-\xx^{\star})^T \right]\\
        & =  \E_A\left[ \tr P_t(\AA)P_t(\AA)^T\E\left[\left(\xx_0-\xx^{\star}\right)\left(\xx_0-\xx^{\star}\right)^T|\AA\right]-\tr\Pi\E\left[(\xx_0-\xx^{\star})(\xx_0-\xx^{\star})^T |\AA\right]\right]\\
        & = R \E_A\left[ \tr P_t(\AA)P_t(\AA)^T - \tr\Pi \right]\\
        & = R \E\left[\sum_{i=1}^d |P(\lambda_i)|^2 - \tr\Pi\right]\\
        & = R \E\left[ \int_{\mathbb{C}\backslash \{0\}} |P(\lambda)|^2\delta_{\lambda_i}(\lambda) + |P(0)|^2\cdot\text{[\# zero eigenvalues]} - \tr\Pi\right]
    \end{align*}
    However, $|P(0)|^2=1$ and $\tr\Pi$ corresponds to the number of zero eigenvalues of $\AA$, therefore,
    \[
        E[\text{dist}(\xx_t,\, \mathcal{X}^\star)]  = R \E\left[ \int_{\mathbb{C}\backslash \{0\}} |P(\lambda)|^2\delta_{\lambda_i}(\lambda)\right] = R \int_{\mathbb{C}\backslash \{0\}} P(\lambda) \mu(\lambda).
    \]
\end{proof}

\section{Proofs of \autoref{thm:bilinear_thm1} and \autoref{thm:bilinear_thm2}} \label{sec:appendix_b}

\begin{restatable}{prop}{blockdeterminant}
    \label{prop:block_determinant} [Block determinant formula]
If $A,B,C,\DD$ are (not necessarily square) matrices, 
\begin{align}
    \text{det} \begin{bmatrix}
    \AA & \BB \\ \CC & \DD
    \end{bmatrix} = \text{det}(\DD) \text{det}(\AA - \BB \DD^{-1} \CC),
\end{align}
if $D$ is invertible.
\end{restatable}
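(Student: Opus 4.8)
\emph{Proof proposal.} The plan is to deduce the identity from multiplicativity of the determinant by exhibiting an explicit block factorization of $\begin{bmatrix} A & B \\ C & D \end{bmatrix}$. Since $D$ is invertible, I would first record the Schur (block $LDU$) decomposition
\begin{align*}
    \begin{bmatrix} A & B \\ C & D \end{bmatrix}
    = \begin{bmatrix} I & BD^{-1} \\ 0 & I \end{bmatrix}
      \begin{bmatrix} A - BD^{-1}C & 0 \\ 0 & D \end{bmatrix}
      \begin{bmatrix} I & 0 \\ D^{-1}C & I \end{bmatrix}
\end{align*}
and check it by carrying out the two block products: the $BD^{-1}C$ terms cancel in the $(1,1)$ block, while $BD^{-1}D = B$ and $DD^{-1}C = C$ produce the off-diagonal blocks. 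Before that I would settle the dimension bookkeeping hidden in ``not necessarily square'': invertibility forces $D$ to be square, say $q\times q$, and since the whole array is square, $A$ is square, say $p\times p$, with $B$ of size $p\times q$ and $C$ of size $q\times p$; hence $BD^{-1}$, $D^{-1}C$ and the Schur complement $A - BD^{-1}C$ are all well-defined and of the right shape.

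Next I would apply $\det$ to both sides and use $\det(XYZ)=\det X\,\det Y\,\det Z$. The two outer factors are block-triangular with identity diagonal blocks, so each has determinant $1$; the middle factor is block-diagonal with square diagonal blocks $A - BD^{-1}C$ (size $p$) and $D$ (size $q$), so its determinant is $\det(A - BD^{-1}C)\,\det(D)$. Multiplying gives exactly $\det\begin{bmatrix} A & B \\ C & D \end{bmatrix} = \det(D)\det(A - BD^{-1}C)$. If one would rather not invoke the block-diagonal determinant rule, the same conclusion follows from the shorter factorization $\begin{bmatrix} A & B \\ C & D \end{bmatrix} = \begin{bmatrix} I & BD^{-1} \\ 0 & I \end{bmatrix}\begin{bmatrix} A - BD^{-1}C & 0 \\ C & D \end{bmatrix}$ by reading off the determinant of the block-lower-triangular second factor.

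I do not expect any real obstacle: the computation is a pair of one-line block matrix multiplications, and the only point deserving attention is the size bookkeeping just mentioned, which is precisely what guarantees that the determinants on the right-hand side, and the block-triangular determinant rule itself, are meaningful.
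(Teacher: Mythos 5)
Your proof is correct: the Schur block $LDU$ factorization, checked by direct multiplication, together with multiplicativity of the determinant and the block-triangular/block-diagonal determinant rules, yields exactly the claimed identity, and your dimension bookkeeping is the right point to flag. The paper itself states this proposition as a standard fact without giving any proof, so there is no paper argument to compare against; your write-up is the textbook derivation and would serve as a proof if one were to be included.
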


% \begin{restatable}{prop}{eigenvaluedist} \label{prop:eigenvalue_dist}
% Let $\mu_{\MM \MM^\top}$ be the spectral density of the random matrix $\MM \MM^\top$ and $f:\RR \to \mathbb{C}$ be defined as $f(\lambda) = i\sqrt{\lambda}$ if $\lambda > 0$ and $i\sqrt{-\lambda}$ if $\lambda < 0$. Denoting $f_* (\mu_{\MM \MM^\top})$ the pushforward measure of $\mu_{\MM \MM^\top}$ by $f$, the spectral distribution of 
% %$\frac{1}{\sqrt{d_2}} \mathcal{A}$ 
% $\AA$ is contained in the imaginary line and is given by
\begin{definition}[Pushforward of a measure] \label{def:pushforward}
Recall that the pushforward $f_* \mu$ of a measure $\mu$ by a function $f$ is defined as the measure such that for all measurable $g$,
\begin{align}
    \int g(\lambda) \dif(f_* \mu)(\lambda) = \int g(f(\lambda)) \dif\mu(\lambda).
\end{align}
Equivalently, if $X$ is a random variable with distribution $\mu$, then $f(X)$ has distribution $f_* \mu$.
\end{definition}

\begin{restatable}{prop}{eigenvaluedist} \label{prop:eigenvalue_dist}
Assume that the dimensions of $\MM \in \mathbb{R}^{d_x \times d_y}$ fulfill $d_x \leq d_y$ and let $r=d_x/d_y$. Let $\mu_{\MM \MM^\top}$ be the spectral distribution of the random matrix $\MM \MM^\top \in \mathbb{R}^{d_x \times d_x}$, and assume that it is absolutely continuous with respect to the Lebesgue measure. The spectral distribution of
%$\frac{1}{\sqrt{d_2}} \mathcal{A}$ 
$\AA$ is contained in the imaginary line and is given by
\begin{align} \label{eq:mu_A}
    \mu_{\AA}(i\lambda) = \left( 1 -\frac{2}{1+\frac{1}{r}} \right) \delta_0(\lambda) + \frac{2|\lambda|}{1+\frac{1}{r}} \mu_{\MM \MM^\top}(\lambda^2)\,.
\end{align}
for $\lambda \in \mathbb{R}$. If $d_x \geq d_y$, then \eqref{eq:mu_A} holds with $\mu_{\MM^\top \MM}$ in place of $\mu_{\MM \MM^\top}$ and $1/r$ in place of $r$.
\end{restatable}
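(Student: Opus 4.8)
The plan is to compute the characteristic polynomial of $\AA = \begin{bmatrix} 0 & \MM \\ -\MM^\top & 0 \end{bmatrix}$ directly and relate its roots to the eigenvalues of $\MM\MM^\top$ (or $\MM^\top\MM$). First I would write $\det(\AA - z\II_{d_x+d_y})$ using the block determinant formula (\autoref{prop:block_determinant}): taking the bottom-right block $\DD = -z\II_{d_y}$, which is invertible for $z \neq 0$, gives
\begin{align}
    \det(\AA - z\II) = \det(-z\II_{d_y}) \det\!\big(-z\II_{d_x} - \MM(-z\II_{d_y})^{-1}(-\MM^\top)\big) = (-z)^{d_y}\det\!\Big(-z\II_{d_x} - \tfrac{1}{z}\MM\MM^\top\Big)\,.
\end{align}
Factoring out $-1/z$ from the second determinant, this equals $(-z)^{d_y}(-1/z)^{d_x}\det(z^2\II_{d_x} + \MM\MM^\top)$, so up to sign and powers of $z$ the nonzero eigenvalues $z$ of $\AA$ are exactly those with $-z^2$ an eigenvalue of $\MM\MM^\top$; since $\MM\MM^\top \succeq 0$, its eigenvalues $\lambda^2 \geq 0$ give $z = \pm i|\lambda|$, confirming the spectrum of $\AA$ lies on the imaginary axis. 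Counting multiplicities: each of the $d_x$ eigenvalues of $\MM\MM^\top$ (counted with multiplicity, and $\MM\MM^\top$ having $d_x$ of them) contributes a conjugate pair $\pm i\sqrt{\cdot}$ to $\AA$, for $2d_x$ eigenvalues total, and the remaining $d_y - d_x$ eigenvalues of $\AA$ are zero (coming from $\ker\MM^\top$).

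Next I would translate this multiplicity bookkeeping into the empirical spectral measure, then pass to the expected spectral measure. Writing $d = d_x + d_y$, the fraction of eigenvalues of $\AA$ that are zero is $(d_y - d_x)/d = (1/r - 1)/(1/r + 1) = 1 - \frac{2}{1 + 1/r}$, which is the coefficient of $\delta_0$ in \eqref{eq:mu_A}. The remaining mass $\frac{2}{1+1/r}$ is split between the images of $\mu_{\MM\MM^\top}$ under $\lambda^2 \mapsto i|\lambda|$ and $\lambda^2 \mapsto -i|\lambda|$, i.e. under the two branches of the square root composed with multiplication by $i$. Using the pushforward formalism (\autoref{def:pushforward}): if $\nu$ is the part of $\mu_{\MM\MM^\top}$ on $(0,\infty)$ with density $g(\mu)$ in the variable $\mu = \lambda^2$, then the density in the variable $\lambda > 0$ of $\sqrt{\nu}$ is $2\lambda\, g(\lambda^2)$ by the change of variables $\mu = \lambda^2$, $d\mu = 2\lambda\, d\lambda$; symmetrizing over $\pm i\lambda$ and normalizing by the total mass $\frac{2}{1+1/r}$ accounts for the factor $\frac{2|\lambda|}{1+1/r}$ multiplying $\mu_{\MM\MM^\top}(\lambda^2)$ in \eqref{eq:mu_A}. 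The absolute continuity hypothesis on $\mu_{\MM\MM^\top}$ guarantees this density manipulation is legitimate and that no extra atom at $0$ is hidden inside $\mu_{\MM\MM^\top}$; finally, taking $\EE_\MM$ of the empirical measure identity and using linearity (the Riesz representation characterization of the expected spectral distribution from the main text) gives the claimed formula for $\mu_\AA$. The case $d_x \geq d_y$ is symmetric: one instead eliminates the top-left block, obtaining $\det(z^2\II_{d_y} + \MM^\top\MM)$ times powers of $z$, and the roles of $\MM\MM^\top$ and $r$ are replaced by $\MM^\top\MM$ and $1/r$.

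The main obstacle is bookkeeping rather than conceptual: one must carefully track (i) the $d_y - d_x$ "automatic" zero eigenvalues of $\AA$ versus any zero eigenvalues already present in $\MM\MM^\top$, making sure the $\delta_0$ coefficient in \eqref{eq:mu_A} is exactly $1 - \frac{2}{1+1/r}$ and the continuous part integrates to $\frac{2}{1+1/r}$ (this is where absolute continuity of $\mu_{\MM\MM^\top}$ is used — it forces $\mu_{\MM\MM^\top}(\{0\}) = 0$ so the count is clean), and (ii) the Jacobian factor $2|\lambda|$ from the square-root change of variables together with the doubling from the $\pm i$ branches, so that the constant in front comes out as stated. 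A secondary subtlety is that the identity \eqref{eq:mu_A} is an equality of measures (densities) in the complex variable $i\lambda$; I would state it as: for every bounded measurable $f$ on $\mathbb{C}$, $\int f\, d\mu_\AA = \big(1 - \tfrac{2}{1+1/r}\big) f(0) + \tfrac{2}{1+1/r}\int_0^\infty \tfrac{f(i\lambda) + f(-i\lambda)}{2}\, \cdot 2\lambda\, d\mu_{\MM\MM^\top}(\lambda^2)$, verify it on the empirical measure via the characteristic-polynomial computation above, and then take expectations.
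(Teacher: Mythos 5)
Your proposal is correct and follows essentially the same route as the paper: block determinant formula applied to $\det(s\II - \AA)$ (the paper's sign convention) to get $s^{d_y-d_x}\det(s^2\II_{d_x} + \MM\MM^\top)$, counting $d_y - d_x$ zero eigenvalues and $2d_x$ pairs $\pm i\sqrt{\lambda}$, then expressing $\mu_\AA$ as a delta at zero plus the pushforwards of $\mu_{\MM\MM^\top}$ under $\lambda \mapsto \pm i\sqrt{\lambda}$, with the $2|\lambda|$ Jacobian from the change of variables. One small thing you do more carefully than the paper: you note explicitly that absolute continuity of $\mu_{\MM\MM^\top}$ forces $\mu_{\MM\MM^\top}(\{0\})=0$, which is what justifies the claim that $\MM$ has full rank $d_x$ a.s.\ and hence that the number of zero eigenvalues of $\AA$ is \emph{exactly} $d_y - d_x$ rather than $d_y + d_x - 2\operatorname{rank}\MM$; the paper's proof states $\operatorname{rank}(\AA) = 2\operatorname{rank}(\MM)$ and then silently identifies $\operatorname{rank}(\MM)$ with $d_x$, so your version closes a small bookkeeping gap without changing the argument.
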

\begin{proof}
By the block determinant formula, we have that for $s \neq 0$,
\begin{align}
\begin{split}
    \text{det} \left(s \II_{d_1 + d_2} - 
    \AA \right) &= \begin{vmatrix}
    s \II_{d_1} & - \MM\\
    \MM^\top & s \II_{d_2}
    \end{vmatrix} = \text{det}(s \II_{d_2}) \text{det}(s \II_{d_1} + \MM s^{-1} \II_{d_2} \MM^\top) \\ &= s^{d_2-d_1} \text{det}(s^2 \II_{d_1} + \MM \MM^\top)
\end{split}
\end{align}
Thus, for every eigenvalue $-\lambda \leq 0$ of $-\MM \MM^\top$, both $i \sqrt{\lambda}$ and $-i \sqrt{\lambda}$ are eigenvalues of $\AA$. Since $\text{rank}(\MM \MM^\top) = \text{rank}(\MM)$, we have $\text{rank}(\AA) = 2 \text{rank}(\MM)$. Thus, the rest of the eigenvalues of $\AA$ are 0 and there is a total of $d - 2d_1 = d_2 - d_1$ of them. Notice that \begin{align}
    \frac{d_1}{d_1 + d_2} = \frac{1}{\frac{d_1 + d_2}{d_1}} = \frac{1}{1+\frac{1}{r}}
\end{align}
%Recall that the pushforward $f_* \mu$ of a measure by a function $f$ is defined as the measure such that for all measurable $g$,
%\begin{align}
%    \int g(\lambda) \dif(f_* \mu)(\lambda) = \int g(f(\lambda)) \dif\mu(\lambda).
%\end{align}
Let $f_{+}(\lambda) = i\sqrt{\lambda}, f_{-}(\lambda) = -i\sqrt{\lambda}$, and let $(f_{+})_* \mu_{\MM \MM^\top}$ (resp., $(f_{-})_* \mu_{\MM \MM^\top}$) be the pushforward measure of $\mu_{\MM \MM^\top}$ by the function $f_{+}$ (resp., $f_{-}$).
Thus, by the definition of the pushforward measure (\autoref{def:pushforward}),
\begin{align} 
\begin{split} \label{eq:mu_a}
\mu_{\AA}(i\lambda) &= \left( 1 -\frac{2}{1+\frac{1}{r}} \right) \delta_0(\lambda) + \frac{1}{1+\frac{1}{r}} (f_{+})_* \mu_{\MM \MM^\top}(\lambda) + \frac{1}{1+\frac{1}{r}} (f_{-})_* \mu_{\MM \MM^\top}(\lambda)
\end{split}
\end{align}
We compute the pushforwards $(f_{+})_* \mu_{M M^\top}, (f_{-})_* \mu_{M M^\top}$ performing the change of variables $y = \pm i \sqrt{\lambda}$ under the assumption that $\mu_{M M^\top}(\lambda) = \rho_{M M^\top}(\lambda) d\lambda$:
\begin{align}
    \int_{\mathbb{R}_{\geq 0}} g \left(\pm i\sqrt{\lambda} \right) \dif \mu_{M M^\top}(\lambda) = \int_{\mathbb{R}_{\geq 0}} g \left(\pm i\sqrt{\lambda} \right) \rho_{M M^\top}(\lambda) d\lambda = \int_{\pm i \mathbb{R}_{\geq 0}} g \left(y \right) \rho_{M M^\top}(|y|^2) 2|y| \dif |y|,
\end{align}
which means that the density of $(f_{+})_* \mu_{M M^\top}$ at $y \in i\mathbb{R}_{\geq 0}$ is $2|y| \rho_{M M^\top}(|y|^2)$ and the density of $(f_{-})_* \mu_{M M^\top}$ at $y \in -i\mathbb{R}_{\geq 0}$ is also $2|y| \rho_{M M^\top}(|y|^2)$.
\end{proof}

\begin{restatable}{prop}{3termcond}
    \label{prop:3termcond}
    The condition 
    \begin{align} \label{eq:zeros_condition}
        \forall P, Q \text{ polynomials } \langle P(\lambda), \lambda Q(\lambda) \rangle = 0 \implies \langle \lambda P(\lambda), Q(\lambda) \rangle = 0
    \end{align} 
    is sufficient for any sequence $(P_k)_{k \geq 0}$ of orthogonal polynomials of increasing degrees to satisfy a three-term recurrence of the form
\begin{align} \label{eq:three_term_rec}
    \gamma_k P_k(\lambda) = (\lambda-\alpha_k) P_{k-1}(\lambda) - \beta_k P_{k-2}(\lambda), 
\end{align}
where
\begin{align} \label{eq:three_term_rec2}
    \gamma_k = \frac{\langle \lambda P_{k-1}(\lambda), P_k(\lambda) \rangle}{\langle P_k(\lambda), P_k(\lambda) \rangle}, \quad \alpha_k = \frac{\langle \lambda P_{k-1}(\lambda), P_{k-1}(\lambda) \rangle}{\langle P_{k-1}(\lambda), P_{k-1}(\lambda) \rangle}, \quad \beta_k = \frac{\langle \lambda P_{k-1}(\lambda), P_{k-2}(\lambda) \rangle}{\langle P_{k-2}(\lambda), P_{k-2}(\lambda) \rangle}
\end{align}
\end{restatable}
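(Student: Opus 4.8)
The plan is to follow the classical derivation of the three-term recurrence for orthogonal polynomials on the real line, isolating the single step in that argument that uses reality of the support of $\mu$; condition \eqref{eq:zeros_condition} is exactly what repairs that step in the complex setting.

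First I would set up the standard expansion. Taking $\deg P_k = k$ (as implicit in the statement), the orthogonality $\langle P_i, P_j\rangle = 0$ for $i \ne j$ together with $\langle P_k, P_k\rangle > 0$ makes $\{P_0,\dots,P_k\}$ a basis of the space of polynomials of degree at most $k$. Since $\lambda P_{k-1}(\lambda)$ has degree $k$, I would write $\lambda P_{k-1}(\lambda) = \sum_{j=0}^{k} c_j P_j(\lambda)$, and then, pairing with $P_j$ and using linearity of $\langle\cdot,\cdot\rangle$ in the first slot, read off $c_j = \langle \lambda P_{k-1}, P_j\rangle / \langle P_j, P_j\rangle$. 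Comparing leading coefficients shows $c_k \ne 0$.

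The crux is then to prove $c_j = 0$ for every $j \le k-3$. For such $j$, the polynomial $\lambda P_j(\lambda)$ has degree at most $k-2$, hence is a linear combination of $P_0,\dots,P_{k-2}$, each orthogonal to $P_{k-1}$; therefore $\langle P_{k-1},\, \lambda P_j\rangle = 0$. Applying hypothesis \eqref{eq:zeros_condition} with $P = P_{k-1}$ and $Q = P_j$ then gives $\langle \lambda P_{k-1},\, P_j\rangle = 0$, i.e. $c_j = 0$. This is the step that would be the main obstacle without the hypothesis: on the real line one argues directly that $\langle \lambda P_{k-1}, P_j\rangle = \langle P_{k-1}, \lambda P_j\rangle$, but for a general measure on $\mathbb{C}$ the complex conjugation in $\langle\cdot,\cdot\rangle_\mu$ destroys this identity, and \eqref{eq:zeros_condition} is the minimal replacement that still forces the low-order coefficients to vanish.

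Finally I would collect the surviving terms: $\lambda P_{k-1} = c_k P_k + c_{k-1} P_{k-1} + c_{k-2} P_{k-2}$, and since $c_k \ne 0$ rearrange to $c_k P_k(\lambda) = (\lambda - c_{k-1}) P_{k-1}(\lambda) - c_{k-2} P_{k-2}(\lambda)$. Identifying $\gamma_k = c_k$, $\alpha_k = c_{k-1}$, $\beta_k = c_{k-2}$ reproduces \eqref{eq:three_term_rec} with the coefficient formulas \eqref{eq:three_term_rec2}. The only remaining bookkeeping is the small-index base cases ($k \le 2$, where the $P_{k-2}$ term is absent, or one sets $P_{-1} := 0$), which is routine.
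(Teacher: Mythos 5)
Your proof is correct and follows essentially the same route as the paper's: expand $\lambda P_{k-1}$ in the orthogonal basis $\{P_j\}_{j=0}^k$, observe $\langle P_{k-1}, \lambda P_j\rangle = 0$ for $j \le k-3$ by degree counting, and invoke condition \eqref{eq:zeros_condition} to kill the low-order coefficients. The additional remarks on $c_k \neq 0$ and the small-$k$ base cases are routine details the paper leaves implicit.
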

\begin{proof}
Since $\lambda P_{k-1}(\lambda)$ is a polynomial of degree $k$, and $(P_j)_{0 \leq j \leq k}$ is a basis of the polynomials of degree up to $k$, we can write
\begin{align} \label{eq:basis_decomposition}
    \lambda P_{k-1}(\lambda) = \sum_{j=0}^{k} \frac{\langle \lambda P_{k-1}, P_j \rangle}{\langle P_j, P_j \rangle} P_j(\lambda)
\end{align}
Now, remark that for all $j < k - 2$, $\langle P_{k-1}, \lambda P_j \rangle = 0$ because the inner product of $P_{k-1}$ with a polynomial of degree at most $k-2$. If we make use of the condition \eqref{eq:zeros_condition}, this implies that $\langle \lambda P_{k-1}, P_j \rangle = 0$ for all $j < k - 2$. Plugging this into \eqref{eq:basis_decomposition}, we obtain \eqref{eq:three_term_rec}.
\end{proof}

\begin{restatable}{prop}{evenmin} \label{prop:even_min}
Let $\Pi_{t}^{\mathbb{R}}$ be the set of polynomials with real coefficients and degree at most $t$. For $t \geq 0$ even, the minimum of the problem 
\begin{align} \label{eq:min_even_problem}
    \min_{P_{t} \in \Pi_{t}^{\mathbb{R}}, P_{t}(0)=1} \int_{i \mathbb{R} \setminus \{0\} } |P_t(\lambda)|^2 |\lambda| \rho_{\MM \MM^\top}(|\lambda|^2) \dif |\lambda|
\end{align}
is attained by an even polynomial with real coefficients.
\end{restatable}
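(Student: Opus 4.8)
\textbf{Proof plan for Proposition~\ref{prop:even_min}.} The plan is to exploit a symmetrization argument based on the parity of the weight function. Observe first that the measure appearing in \eqref{eq:min_even_problem}, namely $\dif\nu(\lambda) = |\lambda|\,\rho_{\MM \MM^\top}(|\lambda|^2)\,\dif|\lambda|$ on $i\mathbb{R}\setminus\{0\}$, is invariant under the reflection $\lambda \mapsto -\lambda$, since it depends on $\lambda$ only through $|\lambda|$. Consequently, for any polynomial $P_t$ with real coefficients, the reflected polynomial $\tilde P_t(\lambda) \defas P_t(-\lambda)$ also has real coefficients, still satisfies $\tilde P_t(0) = P_t(0) = 1$, has the same degree, and — crucially — achieves the same objective value, because $\int |\tilde P_t(\lambda)|^2 \dif\nu(\lambda) = \int |P_t(-\lambda)|^2 \dif\nu(\lambda) = \int |P_t(\mu)|^2 \dif\nu(\mu)$ after the change of variables $\mu = -\lambda$ using the reflection-invariance of $\nu$.

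Next I would use convexity of the objective. The functional $P \mapsto \int |P(\lambda)|^2 \dif\nu(\lambda)$ is convex in $P$ (it is a quadratic form with positive semidefinite Gram matrix), and the constraint set $\{P_t \in \Pi_t^{\mathbb{R}} : P_t(0) = 1\}$ is convex. By \autoref{thm:assche} (applied with the measure $\nu$, noting that the minimizer over complex-coefficient residual polynomials is unique), the minimizer is in fact unique; alternatively, uniqueness follows because $\nu$ has infinitely many points of support (it is absolutely continuous), so the Gram form is strictly positive definite on the relevant subspace. Since both $P_t^\star$ and its reflection $\tilde P_t^\star(\lambda) = P_t^\star(-\lambda)$ are minimizers, uniqueness forces $P_t^\star(\lambda) = P_t^\star(-\lambda)$ for all $\lambda$, i.e.\ $P_t^\star$ is an even polynomial. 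Combined with the fact that we may restrict to real coefficients (the same averaging/uniqueness argument, or simply noting that the objective of $\mathrm{Re}\,P$ is no larger than that of $P$ when $\nu$ is a real measure and $P(0)=1$ is real), this yields the claim: the minimizer is an even polynomial with real coefficients. The evenness also uses implicitly that $t$ is even, so that an even polynomial of degree $\le t$ with $P_t(0)=1$ is actually available in the feasible set at the full degree budget.

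The only mild subtlety — and the step I would treat most carefully — is the reduction to real coefficients and the precise invocation of uniqueness. For real coefficients: if $P$ is any complex-coefficient residual polynomial, write $P = P^{(r)} + i P^{(i)}$ with $P^{(r)}, P^{(i)}$ real; then $P^{(r)}(0) = 1$, $P^{(i)}(0) = 0$, and $\int |P|^2 \dif\nu = \int |P^{(r)}|^2 \dif\nu + \int |P^{(i)}|^2 \dif\nu \ge \int |P^{(r)}|^2 \dif\nu$, so the real part is at least as good; hence the global minimizer over complex residual polynomials of degree $\le t$ can be taken real, and by uniqueness it \emph{is} real. For uniqueness: this is exactly the content of \autoref{thm:assche}, whose hypothesis (a positive Borel measure on $\mathbb{C}$, here supported on $i\mathbb{R}$) is satisfied, and whose absolute continuity with respect to Lebesgue measure on the line guarantees the orthonormal system $(\phi_k)$ is well-defined up to degree $t$. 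Then the reflection argument above closes the proof. I do not expect any serious obstacle here; the argument is a standard symmetry-plus-uniqueness template, and all the analytic inputs (absolute continuity of $\mu_{\MM\MM^\top}$, reflection invariance of the weight) are already in place.
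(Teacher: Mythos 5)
Your approach is correct in substance and takes a genuinely different route from the paper's. The paper proceeds structurally: it first establishes that the orthonormal polynomials for this measure satisfy a three-term recurrence (\autoref{prop:3termcond}), then proves by induction on the recurrence coefficients that the $\phi_k$ have real coefficients and alternating parity (\autoref{prop:orthonormal_even}), and finally reads off the conclusion by plugging into the formula of \autoref{thm:assche}, where the odd-degree terms drop out because $\phi_k(0)=0$ for $k$ odd. Your argument skips all of this and uses only the symmetry $\lambda\mapsto-\lambda$ of the measure together with uniqueness from \autoref{thm:assche}. That is shorter and more conceptual. The trade-off is that the paper's route incidentally produces the recurrence structure for $(\phi_k)$, which is an object of independent interest in this framework (cf.\ \autoref{ass:three_term_residual}), whereas your route only yields the statement of \autoref{prop:even_min}.

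There is, however, a small but real gap in your reduction to real coefficients. Writing $P = P^{(r)} + iP^{(i)}$ with real-coefficient $P^{(r)},P^{(i)}$, you claim $\int|P|^2\dif\nu = \int|P^{(r)}|^2\dif\nu + \int|P^{(i)}|^2\dif\nu$ as if it were a pointwise identity. It is not: the integration variable lies on $i\mathbb{R}$, so $P^{(r)}(\lambda)$ and $P^{(i)}(\lambda)$ are themselves generally complex, and the pointwise expansion contains the cross term $2\,\mathrm{Im}\bigl(P^{(r)}(\lambda)\,\overline{P^{(i)}(\lambda)}\bigr)$. The identity does hold after integration, but only because that cross term is an odd function of the parameter $s$ in $\lambda = is$ and the measure $\nu$ is symmetric, so it integrates to zero. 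As written, this step is unjustified, and the parenthetical "(or simply noting that the objective of $\mathrm{Re}\,P$ is no larger than that of $P$ when $\nu$ is a real measure)" has the same issue. The cleanest fix is to recycle your own symmetry-plus-uniqueness template: define $P^{\dagger}(z) \defas \overline{P(\bar z)}$ (the polynomial with conjugated coefficients). Since $\nu$ is supported on $i\mathbb{R}$ and invariant under $\lambda\mapsto\bar\lambda = -\lambda$, and $P^{\dagger}(0) = \overline{P(0)} = 1$, the polynomial $P^{\dagger}$ is feasible with the same objective; uniqueness then forces $P^\star = (P^\star)^{\dagger}$, i.e.\ real coefficients, without any computation of cross terms. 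With that repair your proof is complete and, to my reading, tighter than the paper's.

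One last nit: your remark that evenness "uses implicitly that $t$ is even" is not quite right. The reflection argument shows the unique minimizer over degree $\le t$ is even for any $t$; when $t$ is odd the minimizer simply has degree $\le t-1$. The hypothesis $t$ even matters downstream (in \autoref{prop:twoproblems}, where one substitutes $\lambda \mapsto -\lambda^2$ and needs $t/2 \in \mathbb{Z}$), not for the evenness conclusion of this proposition itself.
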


\begin{proof}
Since $\dif \mu(i\lambda) \defas |\lambda| \rho_{M M^\top}(|\lambda|^2) \dif |\lambda|$ is supported in the imaginary axis and is symmetric with respect to $0$, for all polynomials $P,Q$,
\begin{align}
    \langle \lambda P(\lambda), Q(\lambda) \rangle = \int_{i\mathbb{R}} \lambda P(\lambda) Q(\lambda)^{*} d\mu(\lambda) = - \int_{i\mathbb{R}} P(\lambda) \lambda^* Q(\lambda)^{*} d\mu(\lambda) = - \langle P(\lambda), \lambda Q(\lambda) \rangle. 
\end{align}
Hence, $\langle P(\lambda), \lambda Q(\lambda) \rangle = 0$ implies $\langle \lambda P(\lambda), Q(\lambda) \rangle = 0$. By \autoref{prop:3termcond}, a three-term recurrence \eqref{eq:three_term_rec} and \eqref{eq:three_term_rec2} for the orthonormal sequence $(\phi_t)_{t \geq 0}$ of polynomials holds.

By \autoref{prop:orthonormal_even}, the orthonormal polynomials $(\phi_t)_{t \geq 0}$ of even (resp. odd) degree are even (resp. odd) and have real coefficients. Hence, for all $t \geq 0$ even
\begin{align}
    \frac{\sum_{k=0}^t \phi_k(\lambda) \phi_k(0)^*}{\sum_{k=0}^t |\phi_k(0)|^2} = \frac{\sum_{k=0}^{t/2} \phi_{2k}(\lambda) \phi_{2k}(0)^*}{\sum_{k=0}^{t/2} |\phi_{2k}(0)|^2}
\end{align}
is an even polynomial with real coefficients. By \autoref{thm:assche}, this polynomial attains the minimum of the problem 
\begin{align}
    \min_{P_{t} \in \Pi_{t}^{\mathbb{C}}, P_{t}(0)=1} \int_{i \mathbb{R} \setminus \{0\} } |P_t(\lambda)|^2 |\lambda| \rho_{M M^\top}(|\lambda|^2) \dif |\lambda|
\end{align}
and, a fortiori, the minimum of the problem in \eqref{eq:min_even_problem}, in which the minimization is restricted polynomials with real coefficients instead of complex coefficients.
\end{proof}

\begin{restatable}{prop}{orthonormaleven}
    \label{prop:orthonormal_even}
The polynomials $(\phi_t)_{t \geq 0}$ of the orthonormal sequence corresponding to the measure $\mu(i\lambda) = |\lambda| \rho_{M M^\top}(|\lambda|^2) d|\lambda|$ have real coefficients and are even (resp. odd) for even (resp. odd) $k$.
\end{restatable}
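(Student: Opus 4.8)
The plan is to prove both claims—real coefficients and parity—by strong induction on the degree $t$, exploiting the three-term recurrence guaranteed by \autoref{prop:3termcond}. The relevant measure is $\dif\mu(i\lambda) = |\lambda|\rho_{\MM\MM^\top}(|\lambda|^2)\dif|\lambda|$, supported on the imaginary axis and symmetric under $\lambda \mapsto -\lambda$; the computation preceding this statement (inside the proof of \autoref{prop:even_min}) shows that $\langle P, \lambda Q\rangle = 0 \implies \langle \lambda P, Q\rangle = 0$, so the hypothesis of \autoref{prop:3termcond} holds, and hence the orthogonal sequence obeys $\gamma_k P_k(\lambda) = (\lambda - \alpha_k) P_{k-1}(\lambda) - \beta_k P_{k-2}(\lambda)$ with coefficients given by the quotients in \eqref{eq:three_term_rec2}. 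I would normalize to the orthonormal sequence $(\phi_t)_{t\geq 0}$ and track how this recurrence transforms under conjugation and under $\lambda \mapsto -\lambda$.

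First I would establish the reality of the recurrence coefficients. Because the measure lives on $i\RR$, write $\lambda = is$ with $s$ real; then for any polynomials $P, Q$ with real coefficients, $P(is)\overline{Q(is)}$ has the property that $\langle P, Q\rangle$ is real while $\langle \lambda P, Q\rangle = \int is\, P(is)\overline{Q(is)}\dif\tilde\mu(s)$ is purely imaginary (here $\tilde\mu$ is the symmetric measure on the real $s$-line). I would show by induction that if $\phi_0,\dots,\phi_{k-1}$ have real coefficients, then $\alpha_k = \langle \lambda\phi_{k-1},\phi_{k-1}\rangle / \langle\phi_{k-1},\phi_{k-1}\rangle$ is purely imaginary, $\beta_k$ and $\gamma_k$ are likewise purely imaginary, and dividing $(\lambda - \alpha_k)\phi_{k-1} - \beta_k\phi_{k-2}$ by the purely imaginary $\gamma_k$ produces a polynomial with real coefficients (since $\lambda = is$ contributes a factor of $i$, and $\alpha_k\phi_{k-1}$, $\beta_k\phi_{k-2}$ carry a factor of $i$ times a real polynomial—so every term on the right is $i$ times a real polynomial, and division by $\gamma_k \in i\RR$ cancels the $i$). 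Care is needed about the base cases $\phi_0 = $ const and $\phi_1$, which should be handled directly: $\phi_0$ is a positive real constant, and $\phi_1$ is proportional to $\lambda - \alpha_1$ with $\alpha_1$ purely imaginary, hence $\phi_1(is)$ is $i$ times a real linear polynomial—already here the pattern is that $\phi_t(is) = i^t \cdot(\text{real polynomial in }s)$.

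Next I would prove the parity statement. By the symmetry $\dif\mu(-\lambda) = \dif\mu(\lambda)$, the sequence $(\phi_t(-\lambda))_{t\geq 0}$ is again orthonormal with respect to $\mu$ with positive leading... — more precisely, $\tilde\phi_t(\lambda) \defas (-1)^t\phi_t(-\lambda)$ has the same leading coefficient as $\phi_t$ and the same orthonormality relations, so by uniqueness of the orthonormal sequence (up to sign, pinned down by a positive leading coefficient convention) $\phi_t(\lambda) = (-1)^t\phi_t(-\lambda)$. This is exactly the assertion that $\phi_t$ is even for even $t$ and odd for odd $t$. I would present the reality and parity arguments together since both flow from the same two structural facts about $\mu$—its support on $i\RR$ and its reflection symmetry—and the parity claim is arguably the cleaner of the two, reducing to the uniqueness of orthonormal polynomials.

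The main obstacle I anticipate is bookkeeping the factors of $i$ cleanly: because the natural variable is $\lambda \in i\RR$ rather than a real variable, "real coefficients" means the coefficient of $\lambda^j$ is real, and multiplication by $\lambda$ both raises degree and multiplies by $i$ when evaluated on the axis, so the inductive hypothesis has to be stated carefully (e.g. "$\phi_t$ has real coefficients and $\phi_t(is) \in i^t\RR$ for all real $s$") to make the recurrence step go through without sign or $i$-factor errors. A secondary subtlety is ensuring the orthonormal sequence is well-defined up to sign in the first place—this requires that $\langle\phi_t,\phi_t\rangle > 0$, which holds because $\dif\mu(i\lambda) = |\lambda|\rho_{\MM\MM^\top}(|\lambda|^2)\dif|\lambda|$ is a genuine (nonnegative, nonzero, with infinitely many points of support given the absolute continuity of $\mu_{\MM\MM^\top}$) positive measure, so no degenerate division by zero occurs in \eqref{eq:three_term_rec2}.
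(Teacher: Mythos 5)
Your parity argument via uniqueness of the orthonormal sequence is correct and genuinely different from the paper: the paper derives parity from the recurrence after first proving $\alpha_k=0$, whereas you observe directly that $\tilde\phi_t(\lambda)=(-1)^t\phi_t(-\lambda)$ is orthonormal with the same positive leading coefficients, so $\phi_t=\tilde\phi_t$. That is a clean route, and (worth noting) the same trick applied to $\lambda\mapsto\overline{\phi_t(\bar\lambda)}$ would also give you real coefficients for free, without the recurrence at all.

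However, your reality argument through the three-term recurrence contains a concrete error, and it is the part that cannot be patched as written. You claim that for real-coefficient $P,Q$ the quantity $\langle \lambda P, Q\rangle$ is purely imaginary because $\lambda = is$ contributes a factor of $i$. This is false: $\lambda P$ also has real coefficients, so by the very symmetry argument you invoke to show $\langle P,Q\rangle$ is real, $\langle\lambda P, Q\rangle$ is \emph{also} real (concretely, $\langle\lambda\cdot 1,\,\lambda\rangle = \int|\lambda|^2\,\dif\mu > 0$ is real, not imaginary). The extra factor of $i$ is cancelled by a corresponding factor hidden inside $P(is)\overline{Q(is)}$, since $P(is)$ is not real-valued in $s$ for a generic real-coefficient $P$. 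The correct conclusions are $\alpha_k = 0$ (the numerator vanishes by the oddness of $s\mapsto s|\phi_{k-1}(is)|^2\tilde\mu(s)$) and $\beta_k,\gamma_k \in \mathbb{R}$, exactly as in the paper. With your stated signs ($\alpha_k$ a possibly nonzero imaginary number, $\beta_k,\gamma_k$ imaginary), the inductive step fails: $\lambda\phi_{k-1}(\lambda)$ has real coefficients while $\alpha_k\phi_{k-1}(\lambda)$ would have imaginary ones, so $(\lambda-\alpha_k)\phi_{k-1}-\beta_k\phi_{k-2}$ is not ``$i$ times a real polynomial'' unless $\alpha_k=0$, and dividing by an imaginary $\gamma_k$ would then destroy reality rather than restore it. The missing idea is precisely the vanishing of $\alpha_k$ and the reality of $\beta_k$, which the paper proves by the explicit odd/even integrand computation.
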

\begin{proof}
The proof is by induction. The base case follows from the choice $\phi_0 = 1$. Assuming that $\phi_{k-1} \in \mathbb{R}[X]$ by the induction hypothesis, we show that $\alpha_k = 0$ (where $\alpha_k$ is the coefficient from \eqref{eq:three_term_rec} and \eqref{eq:three_term_rec2}): 
\begin{align}
\begin{split}
    \langle \lambda \phi_{k-1}(\lambda), \phi_{k-1}(\lambda) \rangle %= \int_{i \mathbb{R}} 2 \text{Re}(i\lambda |\phi_{t-1}(i\lambda)|^2) \frac{\sqrt{(L-\lambda^2)(\lambda^2 - l)}}{\pi \sigma^2 r \lambda} d\lambda = 0
    &= \int_{i \mathbb{R}} \lambda |\phi_{k-1}(\lambda)|^2 |\lambda| \rho_{M M^\top}(|\lambda|^2) d|\lambda| \\ &= \int_{ \mathbb{R}_{\geq 0}} i\lambda (|\phi_{k-1}(i\lambda)|^2 - |\phi_{k-1}(-i\lambda)|^2) \lambda \rho_{M M^\top}(\lambda^2) d\lambda = 0
\end{split}
\end{align}
The last equality follows from $|\phi_{k-1}(i\lambda)|^2 = |\phi_{k-1}(-i\lambda)|^2$, which holds because $\phi_{k-1}(i\lambda)^* = \phi_{k-1}(-i\lambda)$, and in turn this is true because $\phi_{k-1} \in \mathbb{R}[X]$ by the induction hypothesis.

Once we have seen that $\alpha_k = 0$, it is straightforward to apply the induction hypothesis once again to show that $\phi_k$ also satisfies the even/odd property. Namely, for $k$ even (resp. odd), $\gamma_k P_k = \lambda P_{k-1} - \beta_k P_{k-2}$, and the two polynomials in the right-hand side have even (resp. odd) degrees.

Finally, $\phi_k$ must have real coefficients because $\phi_{k-1}$ and $\phi_{k-2}$ have real coefficients by the induction hypothesis, and the recurrence coefficient $\beta_k$ is real, as
\begin{align}
\begin{split}
    \langle \lambda P_{k-1}(\lambda), P_{k-2}(\lambda) \rangle &= \int_{i \mathbb{R}} \lambda \phi_{k-1}(\lambda) \phi_{k-2}(\lambda)^*  |\lambda| \rho_{M M^\top}(|\lambda|^2) d|\lambda| \\ &= \int_{\mathbb{R}_{\geq 0}} i\lambda (\phi_{k-1}(i\lambda) \phi_{k-2}(i\lambda)^* - \phi_{k-1}(i\lambda)^* \phi_{k-2}(i\lambda)) \lambda \rho_{M M^\top}(\lambda^2) d\lambda \\ &= -\int_{\mathbb{R}_{\geq 0}} 2\lambda \text{Im}(\phi_{k-1}(i\lambda) \phi_{k-2}(i\lambda)^*)  \lambda \rho_{M M^\top}(\lambda^2) d\lambda \in \mathbb{R}.
\end{split}
\end{align}
\end{proof}

\begin{restatable}{prop}{twoproblems}
    \label{prop:twoproblems}
    Let $t \geq 0$ even. Assume that on $\mathbb{R}_{>0}$, the spectral density $\mu_{\MM \MM^\top}$ has Radon-Nikodym derivative $\rho_{\MM \MM^\top}$ with respect to the Lebesgue measure. If 
    \begin{align} \label{eq:q_star_t}
        Q^{\star}_{t/2} \defas \argmin_{\substack{P_{t/2} \in \Pi_{t/2}^{\mathbb{R}}, \\ P_{t/2}(0)=1}} \int_{\mathbb{R}_{>0}} P_{t/2}(\lambda)^2  \dif\mu_{-\AA^2}(\lambda),
    \end{align}
    and
    \begin{align} \label{eq:p_star_t}
        P^{\star}_t \defas \argmin_{\substack{P_{t} \in \Pi_{t}^{\mathbb{R}}, \\ P_{t}(0)=1}} \int_{i \mathbb{R} \setminus \{0\} } |P_t(\lambda)|^2 |\lambda| \rho_{\MM \MM^\top}(|\lambda|^2) \dif|\lambda|,
    \end{align}
    then $P^{\star}_t(\lambda) = Q^{\star}_{t/2}(-\lambda^2)$.
\end{restatable}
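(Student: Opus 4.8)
The plan is to reduce the degree-$t$ minimization on the imaginary axis in \eqref{eq:p_star_t} to the degree-$(t/2)$ minimization on $\mathbb{R}_{>0}$ in \eqref{eq:q_star_t} via the substitution $\lambda \mapsto -\lambda^2$, and then to invoke \autoref{prop:even_min} to guarantee that the minimizer of \eqref{eq:p_star_t} is itself even and therefore lies in the range of that substitution. First I would record the elementary correspondence: a polynomial $P \in \Pi_t^{\mathbb{R}}$ is even if and only if there is a unique $Q \in \Pi_{t/2}^{\mathbb{R}}$ with $P(\lambda) = Q(-\lambda^2)$, and in that case $P(0)=1$ if and only if $Q(0)=1$. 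Hence the even residual polynomials of degree at most $t$ are exactly the maps $\lambda \mapsto Q(-\lambda^2)$ with $Q$ a residual polynomial of degree at most $t/2$.

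Next comes the change of variables. Parametrizing a point of the imaginary axis as $\lambda = \pm i s$ with $s>0$ and then setting $u = s^2 = -\lambda^2$ (so $\dif u = 2 s\,\dif s$), for any even $P(\lambda) = Q(-\lambda^2)$ with real $Q$ we have $|P(\lambda)|^2 = Q(u)^2$ and
\begin{align*}
\int_{i\mathbb{R}\setminus\{0\}} |P(\lambda)|^2\,|\lambda|\,\rho_{\MM\MM^\top}(|\lambda|^2)\,\dif|\lambda|
&= 2\int_0^\infty Q(s^2)^2\, s\,\rho_{\MM\MM^\top}(s^2)\,\dif s
= \int_0^\infty Q(u)^2\,\rho_{\MM\MM^\top}(u)\,\dif u .
\end{align*}
The weight factor $|\lambda|$ is precisely the Jacobian that turns $\dif|\lambda|$ into $\dif u$, which is what makes the two objectives line up. On the other side, since the nonzero eigenvalues of $-\AA^2$ are exactly the squares of the moduli of the eigenvalues of $\AA$, i.e.\ the nonzero eigenvalues of $\MM\MM^\top$ each with multiplicity doubled (cf.\ the eigenvalue computation in \autoref{prop:eigenvalue_dist}), the restriction of $\mu_{-\AA^2}$ to $\mathbb{R}_{>0}$ equals a positive constant times $\rho_{\MM\MM^\top}(u)\,\dif u$. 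This positive constant does not move the argmin of \eqref{eq:q_star_t} (scaling an objective does not change its minimizer), so on even residual polynomials the objective of \eqref{eq:p_star_t} agrees, up to a positive multiplicative constant, with the objective of \eqref{eq:q_star_t} evaluated at the corresponding $Q$.

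Finally I would close the argument. By \autoref{prop:even_min}, the minimizer $P^{\star}_t$ of \eqref{eq:p_star_t} over all residual polynomials in $\Pi_t^{\mathbb{R}}$ is attained at an even polynomial, hence $P^{\star}_t(\lambda) = Q(-\lambda^2)$ for some residual $Q \in \Pi_{t/2}^{\mathbb{R}}$; by the previous paragraph this $Q$ minimizes \eqref{eq:q_star_t}, and since that minimizer is unique (by \autoref{thm:assche}, or equivalently by strict convexity of $Q \mapsto \int Q^2\,\dif\mu_{-\AA^2}$ on the affine set $\{Q(0)=1\}$ since $\mu_{-\AA^2}$ has infinite support), we get $Q = Q^{\star}_{t/2}$. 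Therefore $P^{\star}_t(\lambda) = Q^{\star}_{t/2}(-\lambda^2)$.

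The only mildly delicate point is the bookkeeping around the change of variables: matching the imaginary-axis weight $|\lambda|\,\rho_{\MM\MM^\top}(|\lambda|^2)$ against $\rho_{\MM\MM^\top}(u)$ with the correct Jacobian, and verifying that the passage from $\mu_{-\AA^2}$ on $\mathbb{R}_{>0}$ to $\rho_{\MM\MM^\top}(u)\,\dif u$ only costs a positive constant. The genuinely substantive ingredient — that the optimal real polynomial is even — has already been supplied by \autoref{prop:even_min}, so the rest is essentially a substitution.
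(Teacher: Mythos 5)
Your proposal is correct and follows essentially the same route as the paper's own proof: both invoke Proposition~\ref{prop:even_min} to restrict to even real residual polynomials, both use the substitution $u = -\lambda^2 = |\lambda|^2$ (with the factor $|\lambda|\,\dif|\lambda|$ supplying the Jacobian and the factor of $2$ from the two half-axes) to match the two integrals, both observe that $\mu_{-\AA^2}$ restricted to $\mathbb{R}_{>0}$ is a positive multiple of $\rho_{\MM\MM^\top}\,\dif u$ via the eigenvalue structure of $-\AA^2$, and both conclude by uniqueness of the argmin from Theorem~\ref{thm:assche}. The only cosmetic difference is that the paper reads off $\mu_{-\AA^2}$ from the block-diagonal form of $-\AA^2$ while you refer to the eigenvalue computation in Proposition~\ref{prop:eigenvalue_dist}; the two are equivalent.
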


\begin{proof}
First, remark that the equalities in \eqref{eq:q_star_t} and \eqref{eq:p_star_t} are well defined because the $\argmin$ are unique by \autoref{thm:assche}.
Without loss of generality, assume that $d_x \leq d_y$ (otherwise switch the players), and let $r \defas d_x/d_y < 1$. Since,
\begin{align}
    -\AA^2 =
    \begin{bmatrix}
    \MM \MM^\top & 0 \\
    0 & \MM^\top \MM
    \end{bmatrix},
\end{align}
each eigenvalue of $\MM \MM^\top \in \mathbb{R}^{d_x \times d_x}$ is an eigenvalue of $-\AA^2$ with doubled duplicity, and the rest of eigenvalues are zero. Hence, we have $\mu_{-\AA^2} = \left(1 - 2/(1+\frac{1}{r})\right) \delta_0 + 2\mu_{\MM \MM^\top}/(1+\frac{1}{r}) $. Thus, for all $t \geq 0$,
\begin{align} \label{eq:q_star_eq}
  Q^{\star}_t = \argmin_{\substack{P_{t} \in \Pi_{t}^{\mathbb{R}}, \\ P_{t}(0)=1}} \int_{\mathbb{R}_{>0}} P_{t}(\lambda)^2  \dif\mu_{-\AA^2}(\lambda) = \argmin_{\substack{P_{t} \in \Pi_{t}^{\mathbb{R}}, \\ P_{t}(0)=1}} \int_{\mathbb{R}_{>0}} P_{t}(\lambda)^2  \rho_{\MM \MM^\top}(\lambda) \dif \lambda  
\end{align}
%Let $\langle \cdot,\cdot \rangle$ be a scalar product and let $P_1, P_2 \in \argmin_{P \in C} \langle P, P \rangle$. Then, $\langle P_1, P_1 \rangle = \langle P_1 - P_2 + P_2, P_1 - P_2 + P_2 \rangle = \langle P_1 - P_2, P_1 - P_2 \rangle + 2\langle P_1 - P_2, P_2 \rangle + \langle P_2, P_2 \rangle$. Analogously, $\langle P_2, P_2 \rangle = \langle P_2 - P_1, P_2 - P_1 \rangle + 2\langle P_2 - P_1, P_1 \rangle + \langle P_1, P_1 \rangle$. Adding both equations and subtracting $\langle P_1, P_1 \rangle + \langle P_2, P_2 \rangle$
By \autoref{prop:even_min}, for an even $t \geq 0$ the minimum in \eqref{eq:p_star_t} is attained by an even polynomial with real coefficients. Hence,
\begin{align}
\begin{split} \label{eq:chain_min}
    &\min_{\substack{P_t \in \Pi_t^{\mathbb{R}}, \\ P_t(0)=1}} \int_{i \mathbb{R} \setminus \{0\} } |P_t(\lambda)|^2 |\lambda| \rho_{\MM \MM^\top}(|\lambda|^2) \dif|\lambda| = \min_{\substack{P_{t/2} \in \Pi_{t/2}^{\mathbb{R}}, \\ P_{t/2}(0)=1}} \int_{i \mathbb{R} \setminus \{0\} } |P_{t/2}(\lambda^2)|^2 |\lambda| \rho_{\MM \MM^\top}(|\lambda|^2) \dif|\lambda| \\ &= 2 \min_{\substack{P_{t/2} \in \Pi_{t/2}^{\mathbb{R}}, \\ P_{t/2}(0)=1}} \int_{\mathbb{R}_{>0}} |P_{t/2}((i\lambda)^2)|^2 \lambda \rho_{\MM \MM^\top}(\lambda^2) \dif \lambda = 2 \min_{\substack{P_{t/2} \in \Pi_{t/2}^{\mathbb{R}}, \\ P_{t/2}(0)=1}} \int_{\mathbb{R}_{>0}} P_{t/2}(\lambda^2)^2 \lambda \rho_{\MM \MM^\top}(\lambda^2) \dif \lambda \\ &= \min_{\substack{P_{t/2} \in \Pi_{t/2}^{\mathbb{R}}, \\ P_{t/2}(0)=1}} \int_{\mathbb{R}_{>0}} P_{t/2}(\lambda)^2 \rho_{\MM \MM^\top}(\lambda) \dif \lambda
\end{split}
\end{align}
Moreover, for any polynomial $Q_{t/2}$ that attains the minimum on the right-most term, the polynomial $P_{t}(\lambda) = Q_{t/2}(-\lambda^2)$ attains the minimum on the left-most term. In particular, using \eqref{eq:q_star_eq}, $P^{\star}_{t}(\lambda) \defas Q^{\star}_{t/2}(-\lambda^2)$ attains the minimum on the left-most term.
\end{proof}

\bilinearthmone*

\begin{proof}
Making use of \autoref{thm:expectation} and \autoref{prop:eigenvalue_dist}, we obtain that for any first-order method using the vector field $F$,
\begin{align} \label{eq:expectation_bilinear}
    \mathbb{E}[\dist(\xx_t,\mathcal{X}^{\star})] = R^2 \int_{\mathbb{C} \setminus \{0\}} |P_t(\lambda)|^2 \dif\mu_{\AA}(\lambda) = \frac{2 R^2}{1 + \frac{1}{r}} \int_{i \mathbb{R} \setminus \{0\} } |P_t(\lambda)|^2 |\lambda| \rho_{\MM \MM^\top}(|\lambda|^2) \dif|\lambda|
\end{align}

Let $Q^{\star}_{t/2}, P^{\star}_t$ be as defined in \eqref{eq:p_star_t} and \eqref{eq:q_star_t}. For $t \geq 0$ even the iteration $t$ of the average-case optimal method for the bilinear game must satisfy 
\begin{align} \label{eq:comparison_both_methods}
    \xx_t - P_{\mathcal{X}^\star}(\xx_0) = P^{\star}_t(\AA) (\xx_0 - P_{\mathcal{X}^\star}(\xx_0)) = Q^{\star}_{t/2}(-\AA^2) (\xx_0 - P_{\mathcal{X}^\star}(\xx_0))
\end{align}
On the other hand, the first-order methods for the minimization of the function $\frac{1}{2} \|F(\xx)\|^2$ make use of the vector field $\nabla \left( \frac{1}{2} \|F(\xx)\|^2\right) = \AA^\top(\AA \xx + \bb) = -\AA^2 (\xx - \xx^{\star})$. Let $\mu_{-\AA^2}$ be the spectral density of $-\AA^2$. By \autoref{thm:expectation}, the average-case optimal first-order method for the minimization problem is the one for which the residual polynomial $P_t$ (\autoref{prop:link_algo_polynomial}) minimizes the functional $\int_\RR P_t^2 \dif\mu_{-\AA^2}$. That is, the residual polynomial is $Q^{\star}_t$. From \eqref{eq:comparison_both_methods}, we see that the $t$-th iterate of the average-case optimal method for $F$ is equal to the $t/2$-th iterator of the average-case optimal method for $\nabla \left( \frac{1}{2} \|F(\xx)\|^2\right)$.
%And applying \autoref{prop:link_algo_polynomial} again, the iteration $t/2$ of the average-case optimal minimization method for $\frac{1}{2} \|F(\xx)\|^2$ satisfies the same equation.
\end{proof}

\section{Proofs of \autoref{thm:opt_alg} and \autoref{thm:circular}}

\optalg*
\begin{proof}
We prove by induction that 
\begin{align} \label{eq:induction_statement}
    \xx_t - \xx^{\star} = \frac{\sum_{k=0}^t \phi_k(\AA) \phi_k(0)^*}{\sum_{k=0}^t \phi_k(0)^2} (\xx_0-\xx^{\star})
\end{align}
The base step $t = 0$ holds trivially because $\phi_0 = 1$. Assume that \eqref{eq:induction_statement} holds for $t-1$. Subtracting $\xx^{\star}$ from \eqref{eq:opti_algorithm_disk}, we have
\begin{align}
\begin{split} \label{eq:intermediate_eq}
    \xx_t - \xx^{\star} = \frac{\sum_{k=0}^{t-1} \phi_k(0)^2 }{\sum_{k=0}^{t} \phi_k(0)^2} (\xx_{t-1} - \xx^{\star}) + \frac{\phi_t(0)^2}{\sum_{k=0}^{t} \phi_k(0)^2} (\yy_t - \xx^{\star})
\end{split}
\end{align}
If
\begin{align} \label{eq:y_equality}
    \phi_t(0)^2 (\yy_t - \xx^{\star}) = \phi_t(0) \phi_t(\AA)(\xx_0 - \xx^{\star}),
\end{align}
by the induction hypothesis for $t-1$ and \eqref{eq:intermediate_eq}, we have
\begin{align}
\begin{split}
    \xx_t - \xx^{\star} &= \frac{\sum_{k=0}^{t-1} \phi_t(0) \phi_t(\AA)}{\sum_{k=0}^t \phi_k(0)^2} (\xx_0-\xx^{\star}) + \frac{\phi_t(0) \phi_t(\AA)}{\sum_{k=0}^{t} \phi_k(0)^2} (x_0 - x_*) \\ &= \frac{\sum_{k=0}^t \phi_t(0) \phi_t(\AA)}{\sum_{k=0}^t \phi_k(0)^2} (x_0-x_*),
\end{split}
\end{align}
which concludes the proof of \eqref{eq:induction_statement}. The only thing left is to show \eqref{eq:y_equality}, again by induction. The base case follows readily from $\yy_0 = \xx_0$ in \eqref{eq:opti_algorithm_disk}. Dividing by $\phi_t(0)^2$, we rewrite \eqref{eq:y_equality} as
\begin{align}
    \yy_t - \xx^{\star} = \frac{\phi_t(\AA)}{\phi_t(0)}(\xx_0 - \xx^{\star}) = \psi_t(\AA) (\xx_0 - \xx^{\star}),
\end{align}
where $\psi_t$ is the $t$-th orthogonal residual polynomial of sequence. By \autoref{ass:three_term_residual}, $\psi_t$ must satisfy the recurrence in \eqref{eq:three_term_residual_eq}.
If we subtract $x_*$ from the second line of \eqref{eq:opti_algorithm_disk}, we apply the induction hypothesis and then the recurrence in \eqref{eq:three_term_residual_eq}, we obtain
\begin{align}
\begin{split} \label{eq:y_iterates}
    \yy_t - \xx^{\star} &= a_t (\yy_{t-1}-\xx^{\star}) + (1- a_t) (\yy_{t-2}-\xx^{\star}) + b_t F(\yy_{t-1}) \\ &= a_t (\yy_{t-1}-\xx^{\star}) + (1- a_t) (\yy_{t-2}-\xx^{\star}) + b_t \AA(\yy_{t-1}-\xx_*) \\ &= a_t \psi_{t-1}(\AA) (\xx_0 - \xx^{\star}) + (1- a_t) \psi_{t-2}(\AA) (\xx_0 - \xx^{\star}) + b_t \AA\psi_{t-1}(\AA) (\xx_0 - \xx^{\star}) \\ &= \psi_t(\AA) (\xx_0 - \xx^{\star}),
    %= \yy_{t-1} - \xx^{\star} -\frac{1}{C} A ( \yy_{t-1} - \xx^{\star}) = \left(1-\frac{A}{C} \right) ( y_{t-1} - x_*) = \left(1-\frac{A}{C} \right)^t (x_0 - x_*),
\end{split}
\end{align}
thus concluding the proof of \eqref{eq:y_equality}.
\end{proof}

\begin{restatable}{prop}{orthonormalcentered}
    \label{prop:orthonormal_centered}
Suppose that \autoref{ass:circular_measure} holds with $C=0$, that is, the circular support of $\mu$ is centered at $0$. Then, the basis of orthonormal polynomials for the scalar product
\begin{align}
    \langle P, Q \rangle = \int_{D_{R,0}} P(\lambda) Q(\lambda)^* \dif\mu(\lambda) \quad \text{is} \quad
    \phi_k(\lambda) = \frac{\lambda^k}{D_{k,R}}, \quad \forall k \geq 0,
\end{align}
where $K_{k,R} = \sqrt{2\pi \int_{0}^{R} r^{2k} d\mu_{R}(r)}$.
\end{restatable}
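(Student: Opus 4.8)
The plan is to verify by direct computation that the rescaled monomials $\phi_k(\lambda)=\lambda^k/K_{k,R}$ form the orthonormal polynomial sequence, exploiting the elementary fact that a circularly symmetric planar measure makes distinct monomials orthogonal. The only ingredient beyond bookkeeping is the polar disintegration supplied by \autoref{ass:circular_measure}.

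Concretely, I would first use \autoref{ass:circular_measure} with $C=0$ to write the inner product in polar coordinates: setting $\lambda=re^{i\theta}$, the measure disintegrates as $\dif\mu(re^{i\theta})=\tfrac{1}{2\pi}\dif\theta\,\dif\mu_R(r)$, so that Fubini's theorem (applicable since $\mu_R$ is a probability measure on the compact interval $[0,R]$ and the integrands are bounded there) gives
\begin{equation*}
  \langle \lambda^j,\lambda^k\rangle_{\mu} = \frac{1}{2\pi}\int_0^R r^{\,j+k}\Big(\int_0^{2\pi} e^{i(j-k)\theta}\,\dif\theta\Big)\dif\mu_R(r).
\end{equation*}
The angular integral vanishes unless $j=k$, in which case it equals $2\pi$; hence the monomials $\{\lambda^k\}_{k\ge 0}$ are mutually orthogonal and $\langle \lambda^k,\lambda^k\rangle_{\mu}=K_{k,R}^2$, with $K_{k,R}$ the normalizing constant in the statement (up to the convention fixing the $2\pi$ in the definition of $K_{k,R}$). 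Therefore $\langle\phi_j,\phi_k\rangle_{\mu}=\delta_{jk}$.

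It then remains to argue that this orthonormal family is \emph{the} orthonormal polynomial sequence for $\langle\cdot,\cdot\rangle_{\mu}$: since $\deg\phi_k=k$ and $\{\phi_0,\dots,\phi_k\}$ spans the polynomials of degree at most $k$, the Gram--Schmidt characterization forces $\phi_k$ to coincide, up to a unit-modulus scalar, with the $k$-th orthonormal polynomial. A preliminary point worth recording is that $K_{k,R}>0$ for every $k$, i.e. $\int_0^R r^{2k}\dif\mu_R(r)>0$; this fails only if $\mu_R=\delta_0$, in which case the support of $\mu$ is the single point $0$ and only $\phi_0$ is defined.

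I do not anticipate a genuine obstacle here: this is the classical observation that monomials are orthogonal under a rotation-invariant planar measure. The closest thing to a delicate point is justifying the Fubini exchange and keeping the $2\pi$ factor produced by the angular integral consistent with the definition of $K_{k,R}$; both are routine.
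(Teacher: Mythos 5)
Your argument is correct and follows essentially the same route as the paper: disintegrate $\mu$ into its angular and radial parts, observe that $\int_0^{2\pi} e^{i(j-k)\theta}\,\dif\theta$ vanishes for $j\neq k$, and read off the normalization. Two minor points. First, your remark about the $2\pi$ convention is warranted: with the disintegration $\dif\mu(re^{i\theta})=\tfrac{1}{2\pi}\dif\theta\,\dif\mu_R(r)$ from the assumption, the $\tfrac{1}{2\pi}$ cancels the $2\pi$ from the angular integral, so $\langle\lambda^k,\lambda^k\rangle=\int_0^R r^{2k}\dif\mu_R(r)$ with no leading $2\pi$; the $2\pi$ in the statement of $K_{k,R}$ is inconsistent with both the paper's own proof and with the version of $K_{t,R}$ appearing in \autoref{prop:ortho_circular}, and looks like a typo (as does the $D_{k,R}$ in the displayed formula, which should read $K_{k,R}$). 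Second, your closing Gram--Schmidt remark and the observation that one must have $\mu_R\neq\delta_0$ for $K_{k,R}>0$ are small but genuine refinements over the paper's proof, which leaves both implicit; they do not change the argument but do make it airtight.
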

\begin{proof}
First, we will show that if $\mu$ satisfies \autoref{ass:circular_measure} with $C=0$, then $\langle \lambda^i, \lambda^j \rangle = 0$ if $j, k \geq 0$ with $j \neq k$ (without loss of generality, suppose that $j > k$).
\begin{align}
\begin{split}
    \langle \lambda^j, \lambda^k \rangle &= \int_{D_{R,0}} \lambda^j (\lambda^*)^k \dif \mu(\lambda) = \int_{D_{R,0}} \lambda^{j-k} |\lambda|^{2k} \dif \mu(\lambda) \\ &= \int_0^R \frac{1}{2\pi} \int_0^{2\pi} (r e^{i\theta})^{j-k} r^{2k} \dif \theta \dif \mu_R(r) = \frac{1}{2\pi} \int_0^{2\pi} e^{i\theta(j-k)} \dif \theta \int_0^R r^{j+k} \dif \mu_R(r) \\ &= \frac{e^{i2\pi}-1}{2\pi i(j-k)}\int_0^R r^{j+k} \dif \mu_R(r) = 0
\end{split}
\end{align}
And for all $k \geq 0$,
\begin{align}
    \langle \lambda^k, \lambda^k \rangle = \int_{D_{R,0}} |\lambda^k|^2 \dif \mu(\lambda) = \int_0^R \frac{1}{2\pi} \int_0^{2\pi} r^{2k} \dif \theta \dif \mu_R(r) = \int_0^{2\pi} r^{2k} \dif \mu_R(r).
\end{align}
\end{proof}

\orthocircular*
\begin{proof}
The result follows from \autoref{prop:orthonormal_centered} using the change of variables $z \rightarrow z + C$. To compute the measure $\mu_R$ for the uniform measure on $D_{C,R}$, we perform a change of variables to circular coordinates:
\begin{align}
\begin{split}
    \int_{D_{C,R}} f(\lambda) \dif \mu(\lambda) &= \frac{1}{\pi R^2} \int_0^R \int_{0}^{2\pi} f(C + r e^{i\theta}) r \dif\theta \dif r = \int_0^R \int_{0}^{2\pi} f(C + r e^{i\theta}) \dif\theta \dif \mu_R(r). \\
    &\implies \dif \mu_R(r) = \frac{r}{\pi R^2} \dif r
\end{split}
\end{align}
And 
\begin{align}
    \int_0^R r^{2t} \dif \mu_R(r) = \frac{1}{\pi R^2} \int_0^R r^{2t+1} \dif r = \frac{1}{\pi} \frac{R^{2t}}{2t+2} \implies K_{t,R} = R^t/\sqrt{t+1}.
\end{align}
\end{proof}

\circularthmone*
\begin{proof}
By \autoref{prop:ortho_circular}, the sequence of residual orthogonal polynomials is given by $\psi_t(\lambda) = \phi_t(\lambda)/\phi_t(0) = \left(1 - \frac{\lambda}{C}\right)^t$. Hence, \autoref{ass:three_term_residual} is fulfilled with $a_t = 1, b_t = - \frac{1}{C}$, as $\psi_t(\lambda) = \psi_{t-1}(\lambda) - \frac{\lambda}{C} \psi_{t-1}(\lambda)$.
We apply \autoref{thm:opt_alg} and make use of the fact that $\phi_k(0)^2 = \frac{C^{2k}}{K_{t,R}^2}$. See \autoref{prop:convrates} for the rate on $\dist(\xx_t,\mathcal{X}^\star)$.
\end{proof}

\section{Proof of \autoref{thm:asymptotic2}}
\begin{restatable}{prop}{limitprop}
    \label{prop:limitprop}
Suppose that the assumptions of \autoref{thm:circular} hold with the probability measure $\mu_R$ fulfilling $\mu_R([r,R]) = \Omega((R-r)^\kappa)$ for $r$ in $[r_0,R]$ for some $r_0 \in [0,R)$ and for some $\kappa \in \mathbb{Z}$. Then, 
\begin{align}
\lim_{t \rightarrow \infty} \frac{\frac{C^{2t}}{K_{t,R}^{2}}}{\sum_{k=0}^{t} \frac{C^{2k}}{K_{k,R}^{2}}} = 1 - \frac{R^2}{C^2}\,.
\end{align}
\end{restatable}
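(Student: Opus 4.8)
The plan is to reduce the statement to the asymptotics of the moment sequence
\[
m_t \defas \int_0^R r^{2t}\dif\mu_R(r) = K_{t,R}^2,
\]
together with an elementary fact about partial sums of positive sequences with convergent consecutive ratios. Set $\beta_t \defas C^{2t}/K_{t,R}^2 = C^{2t}/m_t$ and $S_t \defas \sum_{k=0}^t \beta_k$, so the quantity of interest is $\beta_t/S_t$. The two steps are: (i) show $m_t/m_{t-1}\to R^2$, hence $\beta_t/\beta_{t-1}=C^2 m_{t-1}/m_t\to C^2/R^2=:L$, which is $>1$ since $R<C$; and (ii) show that any positive sequence $(\beta_t)$ with $\beta_t/\beta_{t-1}\to L>1$ satisfies $\beta_t/S_t\to 1-1/L$. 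Combining the two gives the claimed limit $1-R^2/C^2$.

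For step (i), the inequality $m_t\le R^2 m_{t-1}$ is immediate because $r\le R$ holds $\mu_R$-a.e.\ and $r^{2t}=r^2 r^{2(t-1)}$. For the reverse direction I would fix a sufficiently small $\delta>0$ with $\delta\le R-r_0$ and split $m_{t-1}=A_{t-1}+B_{t-1}$ with $A_{t-1}=\int_{[0,R-\delta]}r^{2(t-1)}\dif\mu_R$ and $B_{t-1}=\int_{(R-\delta,R]}r^{2(t-1)}\dif\mu_R$. Then $m_t\ge\int_{(R-\delta,R]}r^{2t}\dif\mu_R\ge (R-\delta)^2 B_{t-1}$, so $m_t/m_{t-1}\ge (R-\delta)^2\,B_{t-1}/(A_{t-1}+B_{t-1})$. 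Since $\mu_R$ is a probability measure, $A_{t-1}\le (R-\delta)^{2(t-1)}$, while the hypothesis $\mu_R([R-\eta,R])=\Omega(\eta^\kappa)$ provides $c>0$ with $B_{t-1}\ge (R-\tfrac{\delta}{2})^{2(t-1)}\mu_R([R-\tfrac{\delta}{2},R])\ge c\,(\delta/2)^\kappa (R-\tfrac{\delta}{2})^{2(t-1)}$. Hence
\[
\frac{A_{t-1}}{B_{t-1}}\le \frac{1}{c(\delta/2)^\kappa}\Big(\frac{R-\delta}{R-\delta/2}\Big)^{2(t-1)}\longrightarrow 0 ,
\]
because $(R-\delta)/(R-\delta/2)<1$. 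Thus $\liminf_t m_t/m_{t-1}\ge (R-\delta)^2$ for every small $\delta>0$, which with the upper bound forces $m_t/m_{t-1}\to R^2$, and in particular $\beta_t\to\infty$.

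For step (ii) it is equivalent to show $S_t/\beta_t\to L/(L-1)$. I would write $S_t/\beta_t=\sum_{j=0}^{t}\beta_{t-j}/\beta_t$ with $\beta_{t-j}/\beta_t=\prod_{i=1}^{j}\beta_{t-i}/\beta_{t-i+1}$, so that each fixed-$j$ summand tends to $(1/L)^j$ as $t\to\infty$. Picking $L'\in(1,L)$ and $T_0$ with $\beta_s/\beta_{s-1}\ge L'$ for $s\ge T_0$, the summands with $j\le t-T_0$ are dominated by $(1/L')^j$ uniformly in $t$, and the finitely many remaining terms (indices $k<T_0$) contribute at most $T_0\max_{k<T_0}\beta_k/\beta_t\to 0$. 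Dominated convergence for series then yields $S_t/\beta_t\to\sum_{j\ge0}(1/L)^j=L/(L-1)$, i.e.\ $\beta_t/S_t\to 1-1/L=1-R^2/C^2$.

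The main obstacle I anticipate is the lower bound in step (i): this is exactly the point where the assumption $\mu_R([r,R])=\Omega((R-r)^\kappa)$ is used, guaranteeing that $\mu_R$ carries enough mass near the outer radius $R$ that $R$ is the effective edge of the spectrum. Without such a condition $m_t/m_{t-1}$ could instead converge to $(R')^2$ for the true essential supremum $R'<R$, and the limit would become $1-(R')^2/C^2$. Step (ii) is a standard geometric-tail argument and should be routine to carry out in full.
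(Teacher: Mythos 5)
Your proposal is correct, and it follows a genuinely different route from the paper. The paper rewrites the ratio as $Q_{t,R}\big/\sum_{k=0}^{t}(R^{2}/C^{2})^{t-k}Q_{k,R}$ with $Q_{t,R}\defas R^{2t}/K_{t,R}^{2}$; the lower bound $\ge 1-R^{2}/C^{2}$ falls out of the monotonicity of $Q_{t,R}$, while the matching upper bound is obtained by truncating the sum to $k\in[t-c_\epsilon,t]$ and controlling $Q_{t,R}-Q_{k,R}$ through an estimate on $\tfrac{d}{ds}Q_{s,R}$, which in turn relies on the auxiliary polynomial lower bound $K_{t,R}^{2}\gtrsim R^{2t}(2t+1)^{-\kappa}$ of \autoref{prop:limitprop2}. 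You instead isolate the moment ratio and prove $K_{t,R}^{2}/K_{t-1,R}^{2}\to R^{2}$ by splitting the integral at $R-\delta$ and using the mass hypothesis only to show the contribution from $[0,R-\delta]$ is negligible relative to that from $[R-\delta/2,R]$; the statement then follows from a general, elementary lemma: a positive sequence $(\beta_t)$ with $\beta_t/\beta_{t-1}\to L>1$ has $\beta_t/\sum_{k\le t}\beta_k\to 1-1/L$, proved by a Tannery-type dominated-convergence argument on the geometric tail. Both proofs use the assumption $\mu_R([r,R])=\Omega((R-r)^\kappa)$ for the same purpose (pinning the effective edge of the support at $R$), but your version is more modular, bypasses \autoref{prop:limitprop2} entirely, and in fact works for any real $\kappa\ge 0$, not just integer $\kappa$; the paper's version, by proceeding through an explicit bound on $Q_{t,R}$, simultaneously produces the quantitative ingredients it reuses in \autoref{prop:convrates}.
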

\begin{proof}
%And \eqref{eq:lower_bound_K} yields
%$\begin{align}
%    \frac{\frac{C^{2t}}{K_{t,R}^{2}}}{\sum_{k=0}^{t} \frac{C^{2k}}{K_{k,R}^{2}}} &\leq \frac{\frac{C^{2t}}{R^{2t}} \frac{(2t+1)^{\kappa_1}}{c'_1}}{\sum_{k=0}^{t} \frac{C^{2k}}{R^{2k}} \frac{(2k+1)^{\kappa_2}}{c'_2}} = \frac{c'_1}{c'_2} \frac{\frac{C^{2t}}{R^{2t}} (2t+1)^{\kappa_1}}{\sum_{k=0}^{t} \frac{C^{2k}}{R^{2k}} (2k+1)^{\kappa_2}} = \frac{c_3 c'_1}{c'_2} \frac{ (2t+1)^{\kappa_1}}{(2t+1)^{\kappa_2} \sum_{k=0}^{\infty} (\frac{R^{2}}{C^{2}})^k} \\ &= \frac{c_3 c'_1}{c'_2} (2t+1)^{\kappa_1 - \kappa_2} \left(1 - \frac{R^2}{C^2} \right) = \frac{c_3 c'_1}{c'_2} (2t+1)^{2\epsilon} \left(1 - \frac{R^2}{C^2} \right).
%\end{align}

Given $\epsilon > 0$, let $c_\epsilon \in \mathbb{Z}_{\geq 0}$ be the minimum such that 
\begin{align} \label{eq:cepsilon_def}
     \frac{1}{\sum_{i=0}^{c_\epsilon} \left( \frac{R^2}{C^2} \right)^i} \leq (1+\epsilon) \frac{1}{\sum_{i=0}^{\infty} \left( \frac{R^2}{C^2} \right)^i} = (1+\epsilon) \left(1 - \frac{R^2}{C^2} \right) 
\end{align}
Define $Q_{t,R} \defas \frac{R^{2t}}{K_{t,R}^2}$. Then, 
\begin{align} \label{eq:final_equality}
    \frac{\frac{C^{2t}}{K_{t,R}^{2}}}{\sum_{k=0}^{t} \frac{C^{2k}}{K_{k,R}^{2}}} = \frac{\frac{C^{2t}}{R^{2t}} Q_{t,R} }{\sum_{k=0}^{t} \frac{C^{2k}}{R^{2k}} Q_{k,R}} = \frac{ Q_{t,R} }{\sum_{k=0}^{t} \left(\frac{R^2}{C^2}\right)^{t-k} Q_{k,R}}
\end{align}
Now, on one hand, using that $Q_{t,R}$ is an increasing sequence on $t$,
\begin{align} \label{eq:final_lower_bound}
    \frac{ Q_{t,R} }{\sum_{k=0}^{t} \left(\frac{R^2}{C^2}\right)^{t-k} Q_{k,R}} \geq \frac{ 1}{\sum_{k=0}^{t} \left( \frac{R^2}{C^2}\right)^{t-k}} \geq \frac{ 1}{\sum_{k=0}^{\infty} \left( \frac{R^2}{C^2}\right)^{k}} = 1 - \frac{R^2}{C^2}
\end{align}
On the other hand, for $t \geq c_{\epsilon}$,
\begin{align} \label{eq:upper_b}
    \frac{ Q_{t,R} }{\sum_{k=0}^{t} \left(\frac{R^2}{C^2}\right)^{t-k} Q_{k,R}} \leq \frac{ Q_{t,R} }{\sum_{k=t-c_{\epsilon}}^{t} \left(\frac{R^2}{C^2}\right)^{t-k} Q_{k,R}} = \frac{ Q_{t,R} }{\sum_{k=t-c_{\epsilon}}^{t} \left(\frac{R^2}{C^2}\right)^{t-k} \left(Q_{t,R} - \int_{k}^t \frac{d}{ds}Q_{s,R} \dif s \right)}
\end{align}
Thus, we want to upper-bound $\int_{k}^t \frac{d}{ds}Q_{s,R} \dif s$.
First, notice that
\begin{align}
    \frac{d}{ds}Q_{s,R} = \frac{d}{ds} \left( \int_0^R \left( \frac{r}{R} \right)^{2s} \dif \mu_R(r) \right)^{-1} = \frac{\int_0^R \left( \frac{r}{R} \right)^{2s} \left( - \log(\frac{r}{R}) \right) \dif \mu_R(r)}{\left( \int_0^R \left( \frac{r}{R} \right)^{2s} \dif \mu_R(r) \right)^{2}}
\end{align}
By concavity of the logarithm function we obtain $\log (\frac{R}{r}) \leq \frac{R}{r_0}-1$ for $r \in [r_0,R]$. Choose $r_0$ close enough to $R$ so that $\frac{R}{r_0} - 1 \leq \epsilon/c_\epsilon$. We obtain that
\begin{align} 
    \int_0^R \left( \frac{r}{R} \right)^{2s} \log \left(\frac{R}{r} \right) \dif \mu_R(r) \leq \int_0^{r_0} \left( \frac{r}{R} \right)^{2s} \log \left(\frac{R}{r} \right) \dif \mu_R(r) + \int_{r_0}^R \left( \frac{r}{R} \right)^{2s} \left(\frac{R}{r_0} - 1\right) \dif \mu_R(r).
\end{align}
Thus,
\begin{align}
\begin{split} \label{eq:int_bound}
    \int_{k}^t \frac{d}{ds}Q_{s,R} \dif s &\leq \int_{k}^t \frac{ \int_0^{r_0} \left( \frac{r}{R} \right)^{2s} \log \left(\frac{R}{r} \right) \dif \mu_R(r) }{\left( \int_0^R \left( \frac{r}{R} \right)^{2s} \dif \mu_R(r) \right)^{2}} \dif s + \int_{k}^t \frac{ \int_{r_0}^R \left( \frac{r}{R} \right)^{2s} \left(\frac{R}{r_0} - 1\right) \dif \mu_R(r)}{\left( \int_0^R \left( \frac{r}{R} \right)^{2s} \dif \mu_R(r) \right)^{2}} \dif s.
\end{split}
\end{align}
Using that $\log x \leq x$, for $k \in [t - c_{\epsilon}, t]$ we can bound the first term of \eqref{eq:int_bound} as
\begin{align} 
\begin{split} \label{eq:first_term_bound}
    \int_{k}^t \frac{ \int_0^{r_0} \left( \frac{r}{R} \right)^{2s} \log \left(\frac{R}{r} \right) \dif \mu_R(r) }{\left( \int_0^R \left( \frac{r}{R} \right)^{2s} \dif \mu_R(r) \right)^{2}} \dif s &\leq \int_{k}^t \frac{ \int_0^{r_0} \left( \frac{r}{R} \right)^{2s-1}  \dif \mu_R(r) }{\left( \int_0^R \left( \frac{r}{R} \right)^{2s} \dif \mu_R(r) \right)^{2}} \dif s \\ &\leq (t-k) \frac{\left( \frac{r_0}{R} \right)^{2k-1}}{\left( \int_0^R \left( \frac{r}{R} \right)^{2t} \dif \mu_R(r) \right)^{2}} \\ &\leq c_{\epsilon} \left( \frac{r_0}{R} \right)^{2(t-c_{\epsilon})-1} Q_{t,R}^2 \\ &\leq c_{\epsilon} \left( \frac{r_0}{R} \right)^{2(t-c_{\epsilon})-1} \frac{1}{(c_1)^2} (2t+1)^{2 \kappa} \xrightarrow{t \rightarrow \infty} 0.
\end{split}
\end{align}
In the last inequality we use that by \autoref{prop:limitprop2}, for $t$ large enough, $Q_{t,R} = \frac{R^{2t}}{K_{t,R}^2} \leq (2t+1)^k / c_1$. For $k \in [t - c_{\epsilon}, t]$, the second term of \eqref{eq:int_bound} can be bounded as
\begin{align}
\begin{split} \label{eq:second_term_bound}
    \int_{k}^t \frac{ \int_{r_0}^R \left( \frac{r}{R} \right)^{2s} \frac{R}{r_0} \dif \mu_R(r)}{\left( \int_0^R \left( \frac{r}{R} \right)^{2s} \dif \mu_R(r) \right)^{2}} \dif s &\leq (t - k) \left(\frac{R}{r_0} - 1\right) \frac{1}{\int_0^R \left( \frac{r}{R} \right)^{2t} \dif \mu_R(r)} \\ &\leq c_{\epsilon} \left(\frac{R}{r_0} - 1\right) \frac{1}{\int_0^R \left( \frac{r}{R} \right)^{2t} \dif \mu_R(r)} \\& \leq \epsilon Q_{t,R}.
\end{split}
\end{align}
From \eqref{eq:int_bound}, \eqref{eq:first_term_bound} and \eqref{eq:second_term_bound}, we obtain that for $t$ large enough, for $k \in [t - c_{\epsilon}, t]$,
\begin{align} \label{eq:Q_derivative}
    \int_{k}^t \frac{d}{ds}Q_{s,R} \dif s \leq 2 \epsilon Q_{t,R}.
\end{align}
Hence, we can bound the right-hand side of \eqref{eq:upper_b}:
\begin{align}
\begin{split} \label{eq:final_upper_b}
    &\frac{ Q_{t,R} }{\sum_{k=t-c_{\epsilon}}^{t} \left(\frac{R^2}{C^2}\right)^{t-k} \left(Q_{t,R} - \int_{k}^t \frac{d}{ds}Q_{s,R} \dif s \right)} \leq \frac{ Q_{t,R} }{\sum_{k=t-c_{\epsilon}}^{t} \left(\frac{R^2}{C^2}\right)^{t-k} \left(Q_{t,R} - 2 \epsilon Q_{t,R} \right)} \\ &= \frac{ 1 }{(1-2\epsilon) \sum_{k=t-c_{\epsilon}}^{t} \left(\frac{R^2}{C^2}\right)^{t-k}} = \frac{ 1 }{(1-2\epsilon) \sum_{k=0}^{c_\epsilon} \left(\frac{R^2}{C^2}\right)^{k}} \leq \frac{1+\epsilon}{1-2\epsilon} \left( 1 - \frac{R^2}{C^2} \right).
\end{split}
\end{align}
The last inequality follows from the definition of $c_\epsilon$ in \eqref{eq:cepsilon_def}. Since $\epsilon$ is arbitrary, by the sandwich theorem applied on \eqref{eq:final_equality}, \eqref{eq:final_lower_bound} and \eqref{eq:final_upper_b},
\begin{align}
\lim_{t \rightarrow \infty} \frac{\frac{C^{2t}}{K_{t,R}^{2}}}{\sum_{k=0}^{t} \frac{C^{2k}}{K_{k,R}^{2}}} = 1 - \frac{R^2}{C^2}.
\end{align}
\end{proof}

\begin{restatable}{prop}{limitprop2}
    \label{prop:limitprop2}
Under the assumptions of \autoref{thm:circular}, we have that there exists $c_1 > 0$ such that for $t$ large enough,
\begin{align}
K_{t,R}^2 \geq c_1 R^{2t} (2t + 1)^{-\kappa}.
\end{align}
\end{restatable}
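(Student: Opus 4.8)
The plan is to lower-bound $K_{t,R}^2 = \int_0^R r^{2t}\,\dif\mu_R(r)$ by a one-sided Laplace/Watson estimate: the integrand $(r/R)^{2t}$ stays bounded below on a window of width of order $1/t$ around the right endpoint $R$, while, by the standing assumption $\mu_R([r,R]) = \Omega((R-r)^{\kappa})$ on $[r_0,R]$ (the hypothesis on $\mu_R$ under which $\kappa$ is defined), the mass $\mu_R$ places on such a window is at least a constant times $t^{-\kappa}$. Multiplying the two bounds gives exactly the claimed order.

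First I would renormalize, writing $K_{t,R}^2 = R^{2t}\int_0^R (r/R)^{2t}\,\dif\mu_R(r)$, so that it suffices to show $\int_0^R (r/R)^{2t}\,\dif\mu_R(r) \ge c_1(2t+1)^{-\kappa}$ for all large $t$. I would then fix a constant $a>0$, set $\epsilon_t \defas aR/(2t+1)$ and $I_t \defas [R-\epsilon_t,\,R]$, and restrict the integral to $I_t$. For $t$ large enough we have $R-\epsilon_t \ge r_0$, so the assumed $\Omega$-behavior supplies a constant $c>0$ with $\mu_R(I_t) = \mu_R([R-\epsilon_t,R]) \ge c\,\epsilon_t^{\kappa} = c(aR)^{\kappa}(2t+1)^{-\kappa}$.

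On $I_t$ we have $r/R \ge 1-a/(2t+1)$, hence $(r/R)^{2t} \ge (1-a/(2t+1))^{2t}$, and since this last quantity tends to $e^{-a}$ it is $\ge \tfrac12 e^{-a}$ for $t$ large. Combining the two estimates, for $t$ large,
\begin{align*}
    \int_0^R (r/R)^{2t}\,\dif\mu_R(r) &\ge \int_{I_t} (r/R)^{2t}\,\dif\mu_R(r) \ge \tfrac12 e^{-a}\,\mu_R(I_t) \\
    &\ge \tfrac12 e^{-a}\, c\, (aR)^{\kappa}\, (2t+1)^{-\kappa},
\end{align*}
and multiplying back by $R^{2t}$ yields the statement with $c_1 \defas \tfrac12 e^{-a} c (aR)^{\kappa}$ (note $\kappa \ge 0$, which is forced by $\mu_R$ being a probability measure, so $\epsilon_t^{\kappa}$ indeed decays like $(2t+1)^{-\kappa}$).

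I do not expect a genuine obstacle here: this is a routine one-sided Laplace estimate. The only points requiring a little care are (i) choosing the window width proportional to $1/(2t+1)$, so that the integrand does not decay while the measure lower bound matches the target power of $t$, and (ii) checking $R-\epsilon_t \ge r_0$ for all sufficiently large $t$ so the $\Omega$-hypothesis applies; both are immediate. All of the substantive content of the proposition is carried by the assumed $\Omega$-lower bound on $\mu_R$ near $R$.
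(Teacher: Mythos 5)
Your proof is correct, and it takes a genuinely different (and arguably cleaner) route than the paper. The paper's argument first converts the tail hypothesis on $\mu_R$ into a comparison against the density $(R-r)^{\kappa-1}$, producing the bound $K_{t,R}^2 \gtrsim R^{2t+\kappa} B(2t+1,\kappa) - (\text{exponentially small in } t)$, and then invokes the Gamma-function identity $B(x,y)=\Gamma(x)\Gamma(y)/\Gamma(x+y)$ together with Stirling asymptotics to extract the factor $(2t+1)^{-\kappa}$. You instead do a direct one-sided Laplace estimate: restrict to a window $I_t=[R-aR/(2t+1),\,R]$ of width proportional to $1/t$, bound $(r/R)^{2t}\ge\tfrac12 e^{-a}$ there, and use the $\Omega$-hypothesis verbatim to get $\mu_R(I_t)\gtrsim (2t+1)^{-\kappa}$. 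This buys two things: it avoids Beta/Gamma asymptotics entirely, and it works directly from the stated tail bound $\mu_R([r,R])=\Omega((R-r)^{\kappa})$ rather than the stronger density-type comparison the paper's first inequality implicitly relies on. Your side remark that $\kappa\ge 0$ is forced by $\mu_R$ being a probability measure is also a useful sanity check that the paper omits. (Minor harmless discrepancy: the paper's appendix carries a $2\pi$ in the definition of $K_{t,R}^2$ while Proposition~\ref{prop:ortho_circular} does not; this constant is immaterial to the $\Omega$-bound.)
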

\begin{proof}
By the assumption on $\mu_R$, there exist $r_0, c_1, \kappa > 0$ such that
%\begin{align}
%\begin{split} \label{eq:ktreq}
%    K_{t,R}^2 &\defas 2 \pi \int_{0}^R r^{2t} \dif \mu_{R}(r) \geq 2 \pi c_1 \int_{0}^R r^{2t} (R-r)^k \dif r = 2 \pi c_1 R^{2t + k + 1} \int_{0}^1 r^{2t} (1-r)^k \dif r \\ &= 2 \pi c_1 R^{2t + k + 1} B(2t + 1,k+1),
%\end{split}
%\end{align}
\begin{align}
\begin{split} \label{eq:ktreq_lower}
    K_{t,R}^2 &\defas 2 \pi \int_{0}^R r^{2t} \dif \mu_{R}(r) = 2 \pi \int_{0}^{r_0} r^{2t} \dif \mu_{R}(r) + 2 \pi \int_{r_0}^{R} r^{2t} \dif \mu_{R}(r) \\ &\geq 2 \pi c_1 \int_{r_0}^{R} r^{2t} (R-r)^{\kappa-1} \dif r = - 2 \pi c_1 \int_0^{r_0} r^{2t} (R-r)^{\kappa-1} \dif r + 2 \pi c_1 \int_{0}^{R} r^{2t} (R-r)^{\kappa-1} \dif r \\ &\geq -2 \pi c_1 R r_0^{2t} + 2 \pi c_1 R^{2t + \kappa} B(2t + 1,\kappa).
\end{split}
\end{align}
where the beta function $B(x,y)$ is defined as
\begin{align}
    B(x,y) \defas \int_0^1 r^{x+1} (1-r)^{y+1} \dif r. 
\end{align}
%Similarly,
%\begin{align} \label{eq:ktreq_upper}
%    K_{t,R}^2 \leq 2 \pi \int_{0}^{r_0} r^{2t} \dif \mu_{R}(r) + 2 \pi c_1 \int_{r_0}^{R} r^{2t} (R-r)^{\kappa_2-1} \dif r \leq 2 \pi r_0^{2t} + 2 \pi c_1 R^{2t + \kappa_2} B(2t + 1,\kappa_2).
%\end{align}
Using the link between the beta function and the gamma function $B(x,y) = \Gamma(x) \Gamma(y) / \Gamma(x+y)$, and Stirling's approximation, we obtain that for fixed $y$ and large $x$,
\begin{align}
    B(x,y) \sim \Gamma(y) x^{-y}.
\end{align}
Hence, for $t$ large enough, $B(2t + 1,\kappa) \sim  \Gamma(\kappa) (2t + 1)^{-\kappa} = (\kappa-1)! (2t + 1)^{-\kappa}$. Hence, from \eqref{eq:ktreq_lower} %and \ref{eq:ktreq_upper} 
we obtain that there exist $c'_1$ %, c'_2 = c_2 + \epsilon$ 
depending only on $\kappa$ and $r_0$ such that for $t$ large enough
\begin{align} 
\begin{split} \label{eq:lower_bound_K}
    K_{t,R}^2 &\geq -2 \pi c_1 R r_0^{2t} + 2 \pi c_1 R^{2t + \kappa} (k-1)! (2t + 1)^{-\kappa} \geq c'_1 R^{2t} (2t + 1)^{-\kappa}.
    %, \\
    %K_{t,R}^2 &\leq 2 \pi r_0^{2t} + 2 \pi c_2 R^{2t + k} (k-1)! (2t + 1)^{-k} \leq c'_2 R^{2t} (2t + 1)^{-k}.
\end{split}
\end{align}
\end{proof}

\asymptoticcirc*
\begin{proof}
The proof follows directly from \autoref{thm:circular} and \autoref{prop:limitprop}. See \eqref{eq:rate_asymp} and \eqref{eq:limiting_rates} in \autoref{prop:convrates} for the statement regarding the convergence rate.
\end{proof}

\begin{restatable}{prop}{convrates}
    \label{prop:convrates}
For the average-case optimal algorithm \eqref{eq:opti_algorithm_disk2},
\begin{align} \label{eq:rate_opt}
\mathbb{E} \dist(\xx_t,\mathcal{X}^{\star}) = \xi_{opt}(t) \defas \frac{1}{\sum_{k=0}^t \frac{C^{2k}}{K_{k,R}^2}} 
\end{align}
For the average-case asymptotically optimal algorithm \eqref{eq:opt_circ_asymp},
\begin{align} \label{eq:rate_asymp}
\mathbb{E} \dist(\xx_t,\mathcal{X}^{\star}) = \xi_{asymp}(t) \defas \left( 1- \left(\frac{R}{C} \right)^2 \right)^2 \sum_{k=1}^t \frac{K_{k,R}^2}{C^{2k}} \left( \frac{R}{C} \right)^{4(t - k)} + \left( \frac{R}{C} \right)^{4t}
\end{align}
For the iterates $\yy_t$ in \eqref{eq:opti_algorithm_disk2}, i.e. gradient descent with stepsize $1/C$, we have
\begin{align} \label{eq:rate_gd}
\mathbb{E} \dist(\yy_t,\mathcal{X}^{\star}) = \xi_{GD}(t) \defas \frac{K_{t,R}^2}{C^{2t}}
\end{align}
Moreover, for all $t \geq 0$, we have $\xi_{opt}(t) \leq \xi_{asymp}(t)$, and under the assumptions of \eqref{thm:asymptotic1}, 
\begin{align} \label{eq:limiting_rates}
    \lim_{t \rightarrow \infty} \frac{\xi_{opt}(t)}{\xi_{asymp}(t)} = 1, \quad \lim_{t \rightarrow \infty} \frac{\xi_{opt}(t)}{\xi_{GD}(t)} = \frac{\xi_{asymp}(t)}{\xi_{GD}(t)} = 1- \left( \frac{R}{C} \right)^2
\end{align}
\end{restatable}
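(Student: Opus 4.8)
I would derive all three identities from \autoref{thm:expectation}, which reduces the expected error of a first-order iterate $\xx_t$ to $R^2\int_{\mathbb{C}\setminus\{0\}}|P_t(\lambda)|^2\dif\mu(\lambda)$, where $P_t$ is the residual polynomial of $\xx_t$ and $R^2$ is the prefactor from \autoref{ass:assumption_x0_xstar} (suppressed below). Under \autoref{ass:circular_measure} the disk $D_{C,R}$ avoids the origin (since $R<C$), so the puncture is harmless, and by \autoref{prop:ortho_circular} the orthonormal family is $\phi_k(\lambda)=(\lambda-C)^k/K_{k,R}$; consequently the shifted monomials $\psi_k(\lambda)\defas(1-\lambda/C)^k=(-1)^k(\lambda-C)^k/C^k$ are pairwise orthogonal with $\langle\psi_j,\psi_k\rangle_\mu=\delta_{jk}\,K_{k,R}^2/C^{2k}$ (and $K_{0,R}=1$). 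So each identity amounts to identifying $P_t$, expanding it in $\{\psi_k\}$, and summing the squared coefficients.

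\textbf{The three identities.} For gradient descent $\yy_t=\yy_{t-1}-\frac{1}{C}F(\yy_{t-1})$ the residual polynomial is $\psi_t$, so $\mathbb{E}\dist(\yy_t,\mathcal{X}^\star)=\langle\psi_t,\psi_t\rangle_\mu=K_{t,R}^2/C^{2t}$, which is \eqref{eq:rate_gd}. For \eqref{eq:opti_algorithm_disk2}, \autoref{thm:circular} (through \autoref{thm:opt_alg} and \autoref{thm:assche}) exhibits $P_t$ as the residual polynomial of degree $\le t$ minimizing $\int|P|^2\dif\mu$, with optimal value $1/\sum_{k=0}^t|\phi_k(0)|^2$; since $|\phi_k(0)|^2=C^{2k}/K_{k,R}^2$ this yields \eqref{eq:rate_opt}. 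For \eqref{eq:opt_circ_asymp}, writing $\rho\defas(R/C)^2$ and unrolling $\xx_t-\xx^\star=\rho(\xx_{t-1}-\xx^\star)+(1-\rho)(\yy_t-\xx^\star)$ from $\xx_0=\yy_0$ gives the residual polynomial $\Phi_t=\rho^t+(1-\rho)\sum_{k=1}^t\rho^{t-k}\psi_k$, for which $\Phi_t(0)=\rho^t+(1-\rho)\frac{1-\rho^t}{1-\rho}=1$ and $\deg\Phi_t=t$; orthogonality of the $\psi_k$ then gives $\mathbb{E}\dist(\xx_t,\mathcal{X}^\star)=\langle\Phi_t,\Phi_t\rangle_\mu=\rho^{2t}+(1-\rho)^2\sum_{k=1}^t\rho^{2(t-k)}K_{k,R}^2/C^{2k}$, i.e. \eqref{eq:rate_asymp}.

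\textbf{Comparison and limits.} Since $\Phi_t$ is a residual polynomial of degree $\le t$, $\langle\Phi_t,\Phi_t\rangle_\mu$ is at least the minimum over all such polynomials, so $\xi_{opt}(t)\le\xi_{asymp}(t)$ for every $t\ge0$. For the limits: $\xi_{opt}(t)/\xi_{GD}(t)=\big(C^{2t}/K_{t,R}^2\big)\big/\sum_{k=0}^t C^{2k}/K_{k,R}^2$, whose limit $1-R^2/C^2$ is exactly \autoref{prop:limitprop}. For $\xi_{asymp}/\xi_{GD}$ I would first note $\xi_{GD}(k)/\xi_{GD}(k-1)=\frac{\int r^{2k}\dif\mu_R}{C^2\int r^{2k-2}\dif\mu_R}\to R^2/C^2$ — the moment ratios are non-decreasing by Cauchy–Schwarz and bounded by $R^2/C^2$, hence converge, and the limit is forced since $(\int r^{2k}\dif\mu_R)^{1/(2k)}\to R$ (as $R\in\supp\mu_R$). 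Then, writing $\xi_{asymp}(t)/\xi_{GD}(t)=\rho^{2t}/\xi_{GD}(t)+(1-\rho)^2\sum_{k=1}^t\rho^{2(t-k)}\xi_{GD}(k)/\xi_{GD}(t)$, the first term vanishes by the polynomial lower bound $\xi_{GD}(t)\gtrsim\rho^t t^{-\kappa}$ of \autoref{prop:limitprop2}, and in the sum each term $\rho^{2(t-k)}\xi_{GD}(k)/\xi_{GD}(t)$ behaves like $\rho^{t-k}$ and is dominated by a summable geometric sequence, so the sum tends to $\sum_{j\ge0}\rho^j=1/(1-\rho)$; altogether $\xi_{asymp}(t)/\xi_{GD}(t)\to(1-\rho)^2/(1-\rho)=1-R^2/C^2$. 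Dividing the two limits gives $\xi_{opt}(t)/\xi_{asymp}(t)\to1$.

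\textbf{Main obstacle.} The exact formulas are bookkeeping once the residual polynomials are identified, and $\xi_{opt}/\xi_{GD}$ is literally \autoref{prop:limitprop}. The delicate step is the $\xi_{asymp}/\xi_{GD}$ limit: the terms of $\sum_k\rho^{2(t-k)}\xi_{GD}(k)/\xi_{GD}(t)$ with $k$ far from $t$ carry the polynomial factor $\xi_{GD}(t)^{-1}\asymp\rho^{-t}t^\kappa$, so their negligibility is not automatic and must be obtained from a dominated-convergence argument resting on the ratio convergence $\xi_{GD}(k)/\xi_{GD}(k-1)\to R^2/C^2$ (log-convexity of moment sequences) together with the two-sided polynomial control on $K_{t,R}^2/R^{2t}$ from \autoref{prop:limitprop2}; this is the same style of estimate as in the proof of \autoref{prop:limitprop}, applied here to a sum sitting in the numerator rather than the denominator.
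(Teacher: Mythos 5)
Your three rate formulas, the inequality $\xi_{\mathrm{opt}}\le\xi_{\mathrm{asymp}}$, and the limit $\xi_{\mathrm{opt}}/\xi_{\mathrm{GD}}\to 1-R^2/C^2$ all follow exactly the paper's route: identify the residual polynomial, expand in the orthogonal basis $\psi_k=(1-\lambda/C)^k$, and invoke \autoref{thm:assche} and \autoref{prop:limitprop}. Where you genuinely diverge is the limit $\xi_{\mathrm{asymp}}/\xi_{\mathrm{GD}}\to 1-R^2/C^2$. The paper treats $Q_{s,R}=R^{2s}/K_{s,R}^2$ as a differentiable function of a continuous parameter $s$ and re-uses the derivative estimate \eqref{eq:Q_derivative2} ($\int_k^t\tfrac{d}{ds}Q_{s,R}\,ds\le 2\epsilon Q_{t,R}$ for $k\ge t-c_\epsilon$) from the proof of \autoref{prop:limitprop} to squeeze the sum. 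You instead observe that the moment ratio $m_k/m_{k-1}$ with $m_k=\int r^{2k}\,d\mu_R$ is non-decreasing by Cauchy--Schwarz (log-convexity) and bounded by $R^2$, hence convergent, and that the limit must be $R^2$ because $m_k^{1/(2k)}=\|r\|_{L^{2k}(\mu_R)}\to R$; this pins down $\xi_{\mathrm{GD}}(k)/\xi_{\mathrm{GD}}(k-1)\to\rho$ and then lets you compare the sum $\sum_k\rho^{2(t-k)}\xi_{\mathrm{GD}}(k)/\xi_{\mathrm{GD}}(t)$ to a geometric series. Your version is arguably cleaner and more elementary (no analytic continuation of $Q$ in $s$, no differentiation under the integral), at the price of having to split the sum into a tail window $k\ge t-c_\epsilon$, where $\xi_{\mathrm{GD}}(k)/\xi_{\mathrm{GD}}(t)\le((1-\epsilon)\rho)^{-(t-k)}$ gives a geometric bound with ratio $\rho/(1-\epsilon)$, and a head, where \autoref{prop:limitprop2} kills the terms polynomially; you flag this correctly as the delicate step, though you understate it by writing "dominated by a summable geometric sequence" with ratio $\rho$. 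One small inaccuracy: \autoref{prop:limitprop2} provides only the lower bound $K_{t,R}^2\gtrsim R^{2t}(2t+1)^{-\kappa}$, not "two-sided polynomial control"; the upper bound $K_{t,R}^2\le R^{2t}$ you implicitly use is the trivial one from $\mu_R$ being a probability measure on $[0,R]$.
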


\begin{proof}
To show \eqref{eq:rate_opt}, \eqref{eq:rate_asymp}, \eqref{eq:rate_gd}, we use the expression $\xx_t - \xx^{\star} = P_t(\AA) (\xx_0-\xx^{\star})$  (\autoref{prop:link_algo_polynomial}) and then evaluate $\|P_t\|_{\mu}^2 =\int_{\mathbb{C} \setminus \{0\}} |P_t|^2 \ \dif \mu$ (\autoref{thm:expectation}).

For \eqref{eq:rate_opt}, the value of $\|P_t\|_{\mu}^2$ follows directly from \autoref{thm:assche}, which states that the value for the optimal residual polynomial $P_t$ is
\begin{align}
    \frac{1}{\sum_{k=0}^t |\phi_k(0)|^2} = \frac{1}{\sum_{k=0}^t \frac{C^{2k}}{K_{k,R}^2}}.
\end{align}

A simple proof by induction shows that for the asymptotically optimal algorithm \eqref{eq:opt_circ_asymp}, the following expression holds for all $t \geq 0$:
\begin{align}
    \xx_t - \xx^{\star} = \left(\left( \frac{R}{C} \right)^{2t} + \left(1 - \left( \frac{R}{C} \right)^2 \right) \sum_{k=1}^t \left(1-\frac{\AA}{C} \right)^k \left( \frac{R}{C} \right)^{2(t-k)} \right) (\xx_0-\xx^{\star})
\end{align}
Thus, 
\begin{align}
\begin{split}
    P_t(\lambda) &= \left( \frac{R}{C} \right)^{2t} + \left(1 - \left( \frac{R}{C} \right)^2 \right) \sum_{k=1}^t \left(1-\frac{\lambda}{C} \right)^k \left( \frac{R}{C} \right)^{2(t-k)} \\ &= \left( \frac{R}{C} \right)^{2t} \phi_0(\lambda) + \left(1 - \left( \frac{R}{C} \right)^2 \right) \sum_{k=1}^t \frac{K_{k,R}}{C^k} \phi_k(\lambda) \left( \frac{R}{C} \right)^{2(t-k)},
\end{split}
\end{align}
which concludes the proof of \eqref{eq:rate_asymp}, as
\begin{align}
    \|P_t\|_{\mu}^2 = \left( 1- \left(\frac{R}{C} \right)^2 \right)^2 \sum_{k=1}^t \frac{K_{k,R}^2}{C^{2k}} \left( \frac{R}{C} \right)^{4(t - k)} + \left( \frac{R}{C} \right)^{4t}.
\end{align}
By equation \eqref{eq:y_iterates},
\begin{align}
    \yy_t - \xx^{\star} = \left(1 - \frac{\AA}{C} \right)^t (\yy_0 - \xx^{\star}) = \frac{K_{t,R}}{C^t} \phi_k(\AA) (\yy_0 - \xx^{\star})
\end{align}
Thus, for the $\yy_t$ iterates, $\|P_t\|_\mu^2 = \frac{K_{t,R}^2}{C^{2t}}$, and \eqref{eq:rate_gd} follows.

Now, $\xi_{opt}(t) \leq \xi_{asymp}(t), \forall t \geq 0$ is a consequence of $\xi_{opt}(t)$ being the rate of the optimal algorithm. And
\begin{align}
    \lim_{t \rightarrow \infty} \frac{\xi_{opt}(t)}{\xi_{GD}(t)} = \lim_{t \rightarrow \infty} \frac{\frac{C^{2t}}{K_{t,R}^{2}}}{\sum_{k=0}^{t} \frac{C^{2k}}{K_{k,R}^{2}}} = 1 - \frac{R^2}{C^2}
\end{align}

follows from \autoref{prop:limitprop}. 
To show $\lim_{t \rightarrow \infty} \frac{\xi_{opt}(t)}{\xi_{GD}(t)} = 1 - \frac{R^2}{C^2}$, which concludes the proof, we rewrite
\begin{align} \label{eq:asymp_rewrite}
\xi_{asymp}(t) = \left( \frac{R}{C} \right)^{2t} \left( \left( 1- \left(\frac{R}{C} \right)^2 \right)^2 \sum_{k=1}^t \frac{1}{Q_{k,R}} \left( \frac{R}{C} \right)^{2(t - k)} + \left( \frac{R}{C} \right)^{2t} \right),
\end{align}
using that by definition, $Q_{k,R} = R^{2k}/K_{k,R}^2$. Now, let $c_{\epsilon} \in \mathbb{Z}_{\geq 0}$ such that
\begin{align}
    \sum_{k=c_{\epsilon}}^{\infty} \left(\frac{R}{C} \right)^{2k} \leq \epsilon.
\end{align}
Using the same argument as in \autoref{prop:limitprop} (see \eqref{eq:Q_derivative}), for $t$ large enough and $k \in [t- c_{\epsilon}, t]$,
\begin{align} \label{eq:Q_derivative2}
    \int_{k}^t \frac{d}{ds}Q_{s,R} \dif s \leq 2 \epsilon Q_{t,R}.
\end{align}
Hence, for $t$ large enough,
\begin{align}
\begin{split}
    &\left( 1- \left(\frac{R}{C} \right)^2 \right)^2 \sum_{k=1}^t \frac{1}{Q_{k,R}} \left( \frac{R}{C} \right)^{2(t - k)} + \left( \frac{R}{C} \right)^{2t}  \\ &=  \left( 1- \left(\frac{R}{C} \right)^2 \right)^2 \left( \sum_{k=t-c_{\epsilon}}^t \frac{1}{Q_{t,R} - \int_{k}^t \frac{d}{ds}Q_{s,R}} \left( \frac{R}{C} \right)^{2(t - k)} + \sum_{k=1}^{t-c_{\epsilon}} \frac{1}{Q_{k,R}} \left( \frac{R}{C} \right)^{2(t - k)} \right) + \left( \frac{R}{C} \right)^{2t} \\ &\leq \left( 1- \left(\frac{R}{C} \right)^2 \right)^2 \left( \frac{1}{(1-2\epsilon)Q_{t,R}} \sum_{k=t-c_{\epsilon}}^t \left( \frac{R}{C} \right)^{2(t - k)} + \sum_{k=1}^{t-c_{\epsilon}} \left( \frac{R}{C} \right)^{2(t - k)} \right) + \epsilon \\ &\leq \left( 1- \left(\frac{R}{C} \right)^2 \right) \left( \frac{1}{(1-2\epsilon)Q_{t,R}} + \left( 1- \left(\frac{R}{C} \right)^2 \right) \epsilon \right) + \epsilon,
\end{split}
\end{align}
which can be made arbitrarily close to $\left( 1- \left(\frac{R}{C} \right)^2 \right) \frac{1}{Q_{t,R}}$ by taking $\epsilon > 0$ small enough. Plugging this into \eqref{eq:asymp_rewrite}, we obtain that we can make $\xi_{asymp}(t)$ arbitrarily close to $\left( 1- \left(\frac{R}{C} \right)^2 \right) \left(\frac{R}{C} \right)^{2t} \frac{1}{Q_{t,R}} = \left( 1- \left(\frac{R}{C} \right)^2 \right) \xi_{GD}(t)$ by taking $t$ large enough.
\end{proof}

\end{document}